\title{{\bfseries Real algebraic surfaces with many handles in $(\CP^1)^3$}}
\author{\textbf{Arthur Renaudineau}}
\date{}
\newtheorem{theorem}{Theorem}
\newtheorem{lemma}{Lemma}
\newtheorem{remark}{Remark}
\newtheorem{corollary}{Corollary}
\newtheorem{example}{Example}
\newtheorem{definition}{Definition}
\newtheorem*{conjecture}{Conjecture}
\newtheorem{proposition}{Proposition}
\newcommand{\R}{\mathbb{R}}
\newcommand{\RP}{\mathbb{RP}}
\newcommand{\Z}{\mathbb{Z}}
\newcommand{\CP}{\mathbb{CP}}
\newcommand{\PP}{\mathbb{P}}
\newcommand{\C}{\mathbb{C}}
\newcommand{\CC}{\mathbb{C}}
\newcommand{\SSS}{\mathbb{S}}
\begin{document}
\maketitle
\begin{abstract}
In this text, we study Viro's conjecture and related problems for real algebraic surfaces in $(\CP^1)^3$. We construct a counter-example to Viro's conjecture in tridegree $(4,4,2)$ and a family of real algebraic surfaces of tridegree $(2k,2l,2)$ in $(\CP^1)^3$ with asymptotically maximal first Betti number of the real part. To perform such constructions, we consider double covers of blow-ups of $(\CP^1)^2$ and we glue singular curves with special position of the singularities adapting the proof of Shustin's theorem for gluing singular hypersurfaces.
\end{abstract}

\section{Introduction}
A \textit{real algebraic variety} is a complex algebraic variety $X$ equipped with an antiholomorphic involution $c:X\rightarrow X$. Such an antiholomorphic involution is called a \textit{real structure} on $X$. The \textit{real part} of $(X,c)$, denoted by $\R X$, is the set of points fixed by $c$. Unless otherwise specified, all varieties considered are nonsingular. For a topological space $A$, we put $b_i(A)=\dim_{\Z/2\Z}H_i(A,\Z/2\Z)$. The numbers $b_i(A)$ are called \textit{Betti numbers $($\textit{with $\Z/2\Z$ coefficients}$)$ of $A$}.
\\Let $X$ be a simply-connected projective real algebraic surface. From the Smith-Thom inequality and the Comessatti inequalities (see for example ~\cite{DegKhar}), one can deduce the following bounds for $b_0(\R X)$ and $b_1(\R X)$ in terms of Hodge numbers of $X$:
\begin{equation}
b_0(\R X)\leq \frac{1}{2}(h^{2,0}(X)+h^{1,1}(X)+1),
\label{equation1}
\end{equation}
\begin{equation}
b_1(\R X)\leq h^{2,0}(X)+h^{1,1}(X).
\label{equation2}
\end{equation}
In 1980, O. Viro formulated the following conjecture.
\begin{conjecture}$($O. Viro$)$
\\Let $X$ be a simply-connected projective real algebraic surface. Then
$$
b_1(\R X)\leq h^{1,1}(X).
$$
\end{conjecture}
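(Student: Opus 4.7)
This is labelled a conjecture, and the abstract announces that the paper will in fact construct a counter-example to it in tridegree $(4,4,2)$, so a genuine proof is not possible. The useful exercise is to sketch the natural strategy and see where it must break.

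The plan is to sharpen inequality (\ref{equation2}), which reads $b_1(\R X)\leq h^{2,0}(X)+h^{1,1}(X)$ via Smith-Thom combined with Comessatti, by eliminating the contribution of $h^{2,0}(X)$. First I would pass to the Kalinin (or Viro) spectral sequence converging from $H^*(X;\Z/2\Z)$, equipped with the $c$-action, to $H^*(\R X;\Z/2\Z)$. This reduces the problem to bounding the dimension of the $c$-invariant classes in $H^2(X;\Z/2\Z)$ that survive to $E_\infty$ in the bidegree contributing to $H^1(\R X;\Z/2\Z)$. Second I would attempt to lift such mod-$2$ classes to integral ones and argue, via Lefschetz's $(1,1)$-theorem, that $c$-invariance forces them to be of Hodge type $(1,1)$, hence to lie in the image of $NS(X)$, whose rank is at most $h^{1,1}(X)$. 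For simply connected $X$, the absence of $H^1(X)$ and the fact that $\R X$ is totally real should in principle keep the two $(2,0)$-eigenspaces of $c$ on $H^2(X;\C)$ away from the image of $H^1(\R X;\Z/2\Z)$ under Poincaré-Lefschetz duality.

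The main obstacle, and precisely the reason the conjecture fails, is the last step: mod-$2$ reduction does not respect the Hodge decomposition, so a $c$-invariant mod-$2$ class need not come from an integral $(1,1)$-class. The counter-example the paper will produce is a double cover of a blow-up of $(\CP^1)^2$ with ramification along a patchworked curve, and it carries extra mod-$2$ handles that are invisible to the algebraic Picard group of $X$. The technical engine used for the construction, as foreshadowed in the abstract, is an adaptation of Shustin's gluing theorem allowing singular curves with special position of their singularities; hence the role of the present statement in the paper is as a \emph{target} to be violated rather than proved.
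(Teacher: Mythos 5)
You have correctly recognized that this statement is labelled a conjecture and that the paper offers no proof of it; on the contrary, Theorem \ref{theorembis} exhibits a real algebraic surface of tridegree $(4,4,2)$ in $(\CP^1)^3$ with $b_1(\R X)=h^{1,1}(X)+4$, disproving the conjecture in that case, so there is simply nothing in the paper to compare a proof against. Your characterization of the construction that violates it --- a small perturbation of a double cover of a blow-up of $(\CP^1)^2$ ramified along a patchworked singular curve with nodes in special position, built via an adaptation of Shustin's gluing theorem --- matches Sections \ref{doublecovering}, \ref{construction2} and \ref{Transversality}, and your heuristic explanation (a $c$-invariant mod-$2$ class contributing to $H^1(\R X;\Z/2\Z)$ need not be realized by an algebraic $(1,1)$-class) is a sensible account of why the natural sharpening of inequality (\ref{equation2}) cannot go through.
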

When $X$ is the double covering of $\CP^2$ ramified along a curve of an even degree, this conjecture is a reformulation of Ragsdale's conjecture (see ~\cite{Viro80}). The first counterexample to Ragsdale's conjecture was constructed by I. Itenberg (see ~\cite{Itenberg93}) using Viro's combinatorial patchworking (see for example ~\cite{Itenberg1997}). This first counterexample opened the way to various counterexamples to Viro's conjecture and constructions of real algebraic surfaces with many connected components (see ~\cite{ItKhar}, ~\cite{Itenberg1997}, ~\cite{Bihan99}, ~\cite{Bihan2001}, ~\cite{Orevkov2001} ~\cite{Brugalle2006} and ~\cite{Renau}). It is not known whether Viro's conjecture is true for $M$-surfaces (a real algebraic variety $X$ is called an \textit{$M$-variety} if $\sum b_i(\R X)= \sum b_i(X)$ and an \textit{$(M-a)$-variety} if $\sum b_i(\R X)= \sum b_i(X)-2a$).

In this text, we focus on real algebraic surfaces in $(\CP^1)^3$ equipped with the real structure given by the standard conjugation on each factor. A real algebraic surface $X$ in $(\CP^1)^3$ of tridegree $(d_1,d_2,d_3)$ is the zero set of a real polynomial 
$$ 
P\in\R\left[u_1,v_1,u_2,v_2,u_3,v_3\right]
$$ 
homogeneous of degree $d_i$ in the variables $(u_i,v_i)$, for $1\leq i\leq 3$. Up to change of coordinates, one can always assume that $d_1\geq d_2\geq d_3$. Introduce the projection \linebreak $\pi:(\CP^1)^3\rightarrow (\CP^1)^2$ on the first two factors. If $X$ is an algebraic surface of tridegree $(d_1,d_2,1)$ in $(\CP^1)^3$ , then $\pi\vert_X$ is of degree $1$, and $X$ is birationaly equivalent to $(\CP^1)^2$. Hence $h^{2,0}(X)=0$, so Viro's conjecture is true for real algebraic surfaces of tridegree $(d_1,d_2,1)$ in $(\CP^1)^3$. Assume now that $X$ is of tridegree $(d,2,2)$. The projection $\tilde{\pi}: (\CP^1)^3\rightarrow \CP^1$ on the first factor induces an elliptic fibration on $X$. Kharlamov proved (see \cite{Ak-Ma}) that Viro's conjecture is true for elliptic surfaces. Thus, Viro's conjecture is true for real algebraic surfaces of tridegree  $(d,2,2)$ in $(\CP^1)^3$. 
\\Let $X$ be a real algebraic surface of tridegree $(4,4,2)$ in $(\CP^1)^3$. One has $h^{2,0}(X)=9$ and $h^{1,1}(X)=84$. Therefore, using Smith-Thom inequality and one of the two Comessatti inequalities, one obtains
$$
b_1(\R X)\leq 92= h^{1,1}(X)+8.
$$ 
We prove the following result in Section \ref{construction2}.
\begin{theorem}
\label{theorembis}
There exists a real algebraic surface $X$ of tridegree $(4,4,2)$ in $(\CP^1)^3$ such that
$$
\R X\simeq 3S\coprod 2S_2\coprod  S_{40}.
$$
The surface $X$ is an $(M-2)$-surface satisfying 
$$
b_1(\R X)=88=h^{1,1}(X)+4.
$$ 
\end{theorem}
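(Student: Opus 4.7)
The plan is to realize $X$ as (a smooth model of) a real double cover of $(\CP^1)^2$ branched over a real curve $C$ of bidegree $(8,8)$, constructed via a Viro-style patchworking that incorporates singular pieces. The reduction to such a double cover is immediate: writing a real polynomial $P$ of tridegree $(4,4,2)$ as $A u_3^2+B u_3 v_3+C v_3^2$ with $A,B,C$ of bidegree $(4,4)$ exhibits $X$ as a double cover of $(\CP^1)^2$ via $\pi$, branched along the discriminant curve $\{B^2-4AC=0\}$ of bidegree $(8,8)$.

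First, I would design a combinatorial patchwork for a real curve of bidegree $(8,8)$ in $(\CP^1)^2$: a convex lattice subdivision of the rectangle $[0,8]\times[0,8]$, a distribution of signs at its vertices, and, in each cell of the subdivision, a local real polynomial. The local polynomials on several cells will have prescribed singularities (typically nodes) located on the interior of edges of the subdivision. These singularities are chosen so that, once the double cover is performed, each of them contributes either a handle or a separate spherical component to the real locus in a controlled way, and so that the overall combinatorial design reproduces the decomposition $3S\coprod 2S_2\coprod S_{40}$.

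Second, I would apply an adapted version of Shustin's theorem on the gluing of real hypersurfaces with prescribed singularities to lift this combinatorial data to an actual global real polynomial of bidegree $(8,8)$, yielding the branch curve $C$ with the required real topology and singularity configuration. As announced in the abstract, this step involves passing to a blow-up of $(\CP^1)^2$ at the singular points of $C$ and verifying, for the particular arrangement of singularities used, that the transversality hypothesis of Shustin's gluing theorem (vanishing of a suitable $H^1$ controlling the equisingular deformations, equivariantly with respect to complex conjugation) is satisfied.

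Third, once $C$ is produced, I would take the real double cover of (the blow-up of) $(\CP^1)^2$ branched along $C$ and read off the topology of $\R X$ from the pair $(\R(\CP^1)^2,\R C)$ together with the sign of $B^2-4AC$ on each region: each region where $B^2-4AC>0$ lifts to two copies in $\R X$, each where $B^2-4AC<0$ is disregarded, and components of $\R C$ are the gluing circles. A routine Euler characteristic count by region gives the three spheres, the two genus-$2$ surfaces and the genus-$40$ surface. Then $b_1(\R X)=0+2\cdot 4+80=88$ is immediate, and the general bound $b_1(\R X)\leq 92$ recalled above, combined with $b_0(\R X)=b_2(\R X)=6$, forces $\sum_i b_i(\R X)=100=\sum_i b_i(X)-4$, so $X$ is an $(M-2)$-surface with $b_1(\R X)=h^{1,1}(X)+4$.

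The main obstacle is the gluing step. Ordinary smooth Viro patchworking is not enough here, because the prescribed branch curve must carry singularities at specific positions in order to raise the genera of certain real components of the double cover to the required values. Shustin's theorem provides the correct framework, but it has to be adapted both to the toric setting $(\CP^1)^2$ blown up at the singular points of $C$ and to the equivariant real situation; carrying out this adaptation and checking the resulting transversality condition for the specific singular configuration prescribed by the combinatorial model is the core technical work of the proof.
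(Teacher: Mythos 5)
Your overall plan---double cover of a blown-up $(\CP^1)^2$ branched along a singular bidegree-$(8,8)$ curve produced by a singular patchworking, with a Shustin-type transversality check---is the same strategy as the paper, and your topology bookkeeping at the end is correct. However, the proposal misses the single most important constraint in the construction, and this gap would stop the proof from going through.

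The reduction "surface of tridegree $(4,4,2)$ $\Leftrightarrow$ double cover of a blow-up of $(\CP^1)^2$ along a nodal bidegree-$(8,8)$ curve" is not a free equivalence. If you write $X=\{P^2+\varepsilon Q=0\}$ with $P=v_3P_1+u_3P_0$ of tridegree $(2,2,1)$, then the branch curve of the projection to $(\CP^1)^2$ acquires nodes precisely at the $8$ intersection points of $\{P_0=0\}$ and $\{P_1=0\}$, i.e.\ at the intersection of two bidegree-$(2,2)$ curves. Conversely (Proposition~\ref{generalposition}), to go from a nodal curve $D$ of bidegree $(8,8)$ back to a surface of tridegree $(4,4,2)$, you need the $8$ nodes to lie on the complete intersection of two bidegree-$(2,2)$ curves; an arbitrary configuration of $8$ nodes will not do. You say "the prescribed branch curve must carry singularities at specific positions" but never state what those positions are, and this is not a detail: $8$ points in $(\CP^1)^2$ in general position do \emph{not} lie on a pencil of bidegree-$(2,2)$ curves, so the incidence is a genuine, codimension-positive condition that must be built into the patchworking.

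This changes the nature of the Shustin-type step. The adaptation needed is not, as you suggest, a move "to the toric setting $(\CP^1)^2$ blown up at the singular points of $C$" or an equivariance upgrade. What is actually needed (Theorem~\ref{theoremprincipal} and its ingredient Theorem~\ref{theoremfinal}) is a version of the gluing and transversality theorems for a \emph{triple} $(P,L,M)$---the nodal curve $P$ of bidegree $(8,8)$ together with two auxiliary curves $L,M$ of bidegree $(2,2)$---in which one glues all three polynomials simultaneously while preserving the incidence "$L$ and $M$ pass through every node of $P$." The transversality statement one has to prove is therefore about the stratum of triples with this incidence, not about the equisingular stratum of a single curve, and it is controlled by a Riemann--Roch estimate for curves with prescribed nodes \emph{and} prescribed marked points (Lemma~\ref{theoremRR}). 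In addition, to even find the auxiliary $L,M$ on each local piece one must first ensure the nodes there are in general position, which requires a separate argument (Lemma~\ref{generalpositionlemma}); this step is absent from your plan. Without articulating the $(P,L,M)$-incidence constraint and the corresponding transversality theorem for triples, the plan as written does not produce a surface in $(\CP^1)^3$ of the required tridegree.

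Two smaller points: the local singularities in the patchwork should sit in the interior of the tiles (in $(\C^*)^2$), not "on the interior of edges of the subdivision" as you wrote; and the final sentence suggests the bound $b_1\le 92$ "forces" the Betti sums, whereas in fact $b_0=b_2=6$ and $b_1=88$ are read off directly from the explicit topology $3S\sqcup 2S_2\sqcup S_{40}$, after which the $(M-2)$ property is an immediate arithmetic check.
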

The existence of a real algebraic surface of tridegree $(4,4,2)$ in $(\CP^1)^3$ satisfying $90 \leq b_1(\R X) \leq 92$ is still unknown. For $d\geq 3$, the existence of a real algebraic surface of tridegree $(d,3,2)$ in $(\CP^{1})^{3}$ disproving Viro's conjecture is also unknown. 
In Section \ref{constructionasympt}, we focus on the asymptotic behaviour of the first Betti number for real algebraic surfaces of tridegree $(d_1,d_2,2)$ in $(\CP^{1})^{3}$. Let $X$ be a real algebraic surface of tridegree $(d_1,d_2,2)$ in $(\CP^1)^3$. One has 
$$
h^{2,0}(X)=d_1d_2-d_1-d_2+1,
$$
and 
$$
h^{1,1}(X)=6d_1d_2-2d_1-2d_2+4.
$$
If $\mathcal{S}_{d_1,d_2}$ denotes the set of nonsingular real algebraic surfaces of tridegree $(d_1,d_2,2)$ in $(\CP^{1})^{3}$, then it follows from Smith-Thom inequality and one of the two Comessatti inequalities that
$$
 \max_{X\in\mathcal{S}_{d_1,d_2}} b_1(\R X) \leq 7d_1d_2 -3d_1-3d_2+5.
$$
We prove the following result in Section \ref{constructionasympt}.
\begin{theorem}
\label{theorem1}
There exists a family $(X_{k,l})$ of nonsingular real algebraic surfaces of tridegree $(2k,2l,2)$ in $(\CP^1)^3$, and $A,B,c,d,e\in\Z$ such that for all $k\geq A$ and for all $l\geq B$, one has
$$
b_1(\R X_{k,l})\geq 7\cdot 2k\cdot 2l-c \cdot 2k-d \cdot 2l+e.
$$
\end{theorem}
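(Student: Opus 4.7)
The plan is to realize $X$ as a double cover of $(\CP^1)^2$ and to control $b_1(\R X)$ through the real topology of the branch curve. Writing a polynomial of tridegree $(2k,2l,2)$ in the form $P = A\,u_3^2 + 2 B\,u_3 v_3 + C\,v_3^2$ with $A, B, C\in\R[u_1,v_1,u_2,v_2]$ of bidegree $(2k, 2l)$, the projection $\pi$ restricts on $X$ to a degree-two map branched along the real discriminant curve $\Delta=\{B^2-AC=0\}$ of bidegree $(4k, 4l)$. When $\Delta$ is nonsingular, $\R X$ is the topological double along $\R\Delta$ of the compact region of $\R(\CP^1)^2$ on which $B^2-AC$ has the sign making the fiber of $\pi$ real; hence $b_1(\R X)$ is encoded in the real chart of $\Delta$ inside the torus $\R(\CP^1)^2$.

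A Harnack-type nonsingular $\Delta$ of bidegree $(4k,4l)$ only yields $b_1(\R X)$ of order $4 d_1 d_2$. To reach the leading coefficient $7 d_1 d_2=28 k l$, I would allow $\Delta$ to acquire many real nodes of \emph{hyperbolic} type (transverse crossings of two real branches): each such node corresponds to a real $A_1$-singularity of the surface $\{P=0\}$, and perturbing $P$ within the tridegree-$(2k,2l,2)$ family smooths this singularity, with one side of the Morse smoothing adding a handle to $\R X$. Equivalently, one works with the smooth auxiliary model given by the double cover of the blow-up of $(\CP^1)^2$ at the nodes, branched along the proper transform of $\Delta$. Arranging $\sim 3 d_1 d_2$ hyperbolic real nodes on $\Delta$ and choosing the appropriate smoothing at each of them produces the desired $3 d_1 d_2$ additional handles on top of the smooth-branch count.

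The branch curve $\Delta$ itself is built by an adaptation of Shustin's patchworking theorem for real singular hypersurfaces. I would fix a convex subdivision of the Newton rectangle $[0, 4k]\times [0, 4l]$ into small cells, assign to each cell a local chart represented by a real nodal curve carrying a controlled number of hyperbolic nodes in prescribed interior positions (for example, small perturbations of unions of real lines or of products of univariate real polynomials), and glue them into a real bidegree-$(4k, 4l)$ curve whose singular set and real chart are the concatenations of the local data. The global node count is then linear in the area of the Newton rectangle, so after smoothing one obtains $b_1(\R X_{k,l})\geq 7\cdot 2k\cdot 2l - c\cdot 2k - d\cdot 2l + e$ for suitable constants and for all $k\geq A$, $l\geq B$.

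The main obstacle is the extension of Viro's combinatorial patchworking to the singular setting required here. One must verify that for generically chosen local charts carrying nodes the global patchwork polynomial is transverse to the equisingular stratum inside the relevant toric linear system; concretely, this reduces to a cohomology vanishing statement of the kind established in Shustin's original proof, adapted to $(\CP^1)^2$ and to our specific tridegree. A secondary difficulty is to ensure that the glued nodes land in the region of $\R(\CP^1)^2$ where the chosen smoothing contributes a handle rather than an isolated sphere. Once both points are settled, the quantitative statement reduces to finite combinatorics over the chosen subdivision, from which the constants $A, B, c, d, e$ can be extracted explicitly.
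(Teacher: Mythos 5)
Your general outline — realize $X$ as (a deformation of) a double cover of a blow-up of $(\CP^1)^2$ branched along a nodal curve of bidegree $(4k,4l)$ built by Shustin's singular patchworking — agrees with the paper's strategy, but there is a genuine gap that your proposal does not confront. You treat the placement of the nodes as a free choice (''generically chosen local charts carrying nodes''), whereas the whole difficulty is that the nodes of the branch curve cannot be placed freely. For the resulting surface $Z=\{P^2+\varepsilon Q=0\}$ to have tridegree $(2k,2l,2)$, the polynomial $P$ must have tridegree $(k,l,1)$, so the auxiliary surface $\{P=0\}=\{v_3L+u_3M=0\}$ is the blow-up of $(\CP^1)^2$ at exactly $2kl$ points, and these points are forced to be the intersection points of two bidegree-$(k,l)$ curves $\{L=0\}$ and $\{M=0\}$. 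The branch curve of bidegree $(4k,4l)$ must therefore have its nodes \emph{precisely} at those $2kl$ intersection points, not at $\sim 12kl$ generic points. A dimension count rules out the naive version: the space of polynomials $B^2-AC$ with $A,B,C$ of bidegree $(2k,2l)$ has strictly smaller dimension than the full space of bidegree $(4k,4l)$ polynomials for $k,l\geq 2$, so a generic nodal curve is not a discriminant and not every blow-up embeds in $(\CP^1)^3$.

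This incidence constraint is exactly what requires the two technical ingredients missing from your proposal. First, Proposition \ref{generalposition}, showing (by sheaf cohomology and Riemann-Roch) that a bidegree-$(4k,4l)$ curve singular at such a prescribed set of $2kl$ points does lift to $\{P=0\}\cap\{Q=0\}$ for some tridegree-$(2k,2l,2)$ polynomial $Q$. Second, and more importantly, a strengthened transversality and patchworking statement (Theorem \ref{theoremprincipal}, using Theorem \ref{theoremfinal} in Section \ref{Transversality}) that glues \emph{simultaneously} the nodal curve and the two auxiliary curves $L$, $M$, so that the incidence of the nodes with $\{L=0\}\cap\{M=0\}$ survives the deformation. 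Ordinary Shustin patchworking of a single nodal curve, as you invoke, does not control this incidence. There is also a smaller accounting mismatch: the paper's construction has exactly $2kl$ nodes, forced by the geometry, and the excess of $b_1$ is produced by the oval arrangement of the patchworked branch curve (via $b_0$ and $\chi$ of the region $Y_-$ on the blow-up), not by independently smoothing $\sim 3d_1d_2$ nodes as you suggest.
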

In order to prove Theorem \ref{theorembis} and Theorem \ref{theorem1}, we consider double coverings of blow-ups of $(\CP^1)^2$ and we adapt the proof of Shustin's theorem for gluing singular hypersurfaces.
This text is organized as follows. In Section \ref{singulargluing}, we recall briefly Shustin's theorem for gluing singular hypersurfaces. In Section \ref{lowdegree}, we discuss orientability of closed two-dimensional submanifolds of $(\RP^{1})^{3}$. In Section \ref{doublecovering}, we present a way to construct a surface of degree $(2k,2l,2)$ in $(\CP^1)^3$ as a small perturbation of a double covering of a blow-up of $(\CP^1)^2$. In Section \ref{constructionasympt}, we prove Theorem \ref{theorem1} and in Section \ref{construction2} we prove Theorem \ref{theorembis}. In Section \ref{Transversality}, we recall Brusotti theorem and prove a transversality theorem needed in our construction. 

\textbf{Acknowledgments.} I am very grateful to Erwan Brugallé and Ilia Itenberg for very useful discussions. I am also grateful to Eugenii Shustin for very useful remarks and for answering all my questions.

\section{Viro's method for gluing singular hypersurfaces}
\label{singulargluing}
Here we follow \cite{Shustin}. Another reference is the book \cite{IMS}. Let us first fix the notations and introduce standard definitions.
\begin{definition}
Let $f=\sum a_i z^i$ be a polynomial in $\CC\left[z_1,\cdots ,z_n\right]$. The convex hull of the set
$
\left\lbrace i\in\Z^n \mid a_i\neq 0 \right\rbrace
$
is called the Newton polytope of $f$. An integer convex polytope in $\R^{n}$ is the convex hull of a finite subset of $\Z^{n}\subset\R^n$. For an integer convex polytope $\Delta$, denote by $\mathcal{P}\left(\Delta\right)$ the space of polynomials with Newton polytope $\Delta$. 
\end{definition}
\begin{definition}
Let $f=\sum a_iz^i\in\mathcal{P}(\Delta)$.
Let $\Gamma\subset\Z^n$ be a subset of $\Delta$. The \textit{truncation} of $f$ to $\Gamma$ is the polynomial $f^\Gamma$ defined by 
$
f^\Gamma=\sum_{i\in\Gamma} a_iz^i.
$
\end{definition}
\begin{definition}
A subdivision of an integer convex polytope $\Delta$ is a set of integer convex polytopes $(\Delta_i)_{i\in I}$ such that:

\begin{itemize}
\item $\cup_{i\in I}\Delta_i=\Delta$,
\item if $i,j\in I$, then the intersection $\Delta_i\cap\Delta_j$ is a common face of the polytope $\Delta_i$ and the polytope $\Delta_j$, or empty.
\end{itemize}
A subdivision $\cup_{i\in I}\Delta_i$ of $\Delta$ is said to be \textit{convex} if there exists a convex piecewise-linear function $\nu:\Delta\rightarrow\R$ whose domains of linearity coincide with the polytopes $\Delta_i$.
\end{definition}

\begin{definition}
\label{definitionchart}
Let $\Delta\subset(\R_+)^n$ be an integer convex n-dimensional polytope. Let $f\in\mathcal{P}(\Delta)$, and let $Z(f)$ be the set $\lbrace x\in(\R^*)^n\vert\mbox{ } f(x)=0\rbrace.$
In the orthant $(\R_+^*)^n$, define $\phi$ as 
$$
\begin{array}{ccccc}
\phi & : & (\R_+^*)^n & \to & (\R^*)^n\\
 & & x & \mapsto & \dfrac{\sum_{i\in\Delta\cap\Z^n}\mid x^{i}\mid i}{\sum_{i\in\Delta\cap\Z^n}\mid x^{i}\mid}.  \\
\end{array}
$$
In the orthant $s_\varepsilon((\R_+^*)^n)$, we put 
$$
\phi(s_\varepsilon(x))=s_\varepsilon(\phi(x)),
$$
where $s_\varepsilon(x_1,\cdots ,x_n)=((-1)^{\varepsilon_1} x_1,\cdots ,(-1)^{\varepsilon_n} x_n)$.
\\We call chart of $f$ the closure of $\phi(Z(f))$ in $\Delta_*$, where
$$
\Delta_{*}=\bigcup_{\varepsilon\in(\Z/2\Z)^n}s_\varepsilon(\Delta).
$$ Denote by $C(f)$ the chart of $f$.
\end{definition}
\begin{definition}
Le $\Delta$ be an integer convex polytope in $\R^{n}$. If $\Gamma$ is a face of $\Delta_*$, then, for all integer vectors $\alpha$ orthogonal to $\Gamma$ and for all $x\in\Gamma$, identify $x$ with $s_\alpha(x)$. Denote by $\hat{\Delta}$ the quotient of $\Delta_*$ under these identifications, and denote by $\pi_\Delta$ the quotient map.
\end{definition}

The term \textit{singular point} of a polynomial $f$ in $n$ variables means a singular point of the hypersurface $\lbrace f=0\rbrace\cap (\CC^*)^n$. We study real polynomials with only finitely many singular points.  Denote by $\mathrm{Sing}(f)$ the set of singularities of $f$.
Let us be given a certain classification $\mathcal{S}$ of isolated hypersurface singularities which are invariant with respect to the transformations
$$
f(z_1,\cdots ,z_n)\rightarrow \lambda_0 f(\lambda_1 z_1,\cdots ,\lambda_n z_n), 
$$
where $\lambda_0,\cdots ,\lambda_n > 0$. In addition, assume that in each type of the classification, the Milnor number is constant. For a polynomial $f$, denote by $\mathcal{S}(f)$ the function 
$$
\begin{array}{ccccc}
\mathcal{S}(f) & : & \mathcal{S} & \longrightarrow & \Z\\
 & & s & \mapsto & \#\left\lbrace z\in (\CC^{*})^{n}\mid z \mbox{ is in } \mathrm{Sing}(f) \mbox{ of type } s\right\rbrace.\\
\end{array}
$$
\begin{definition}
A polynomial $f\in\mathcal{P}\left(\Delta\right)$ is called peripherally nonsingular $($PNS$)$ if for every proper face $\Gamma\subset\Delta$, the hypersurface $Z(f^\Gamma)=\lbrace z\in(\CC^*)^n\mid f^\Gamma(z)=0\rbrace$ is nonsingular.
\end{definition}
\begin{definition}	
Let $f\in\mathcal{P}\left(\Delta\right)$ and $\partial\Delta_+\subset \partial\Delta$ be the union of some facets of $\Delta$. Put
$$
\mathcal{P}\left(\Delta,\partial\Delta_+,f\right)=\left\lbrace g\in\mathcal{P}\left(\Delta\right) \mid g^\Gamma=f^\Gamma \:\: \mbox{ for any facet } \:\: \Gamma\subset\partial\Delta_+\right\rbrace.
$$
\end{definition}
\begin{definition}
\label{S-transversality}
Let $f$ be a polynomial with only finitely many singular points and let $v_1, \cdots , v_m$ be all the singular points of $f$. In the space $\Sigma(d)$ of polynomials of degree less than $d$, for $d\geq \deg f$, consider a germ $M_d(f)$ at $f\in\Sigma(d)$ of the variety of polynomials with singular points in neighborhoods of the points $v_1,\cdots ,v_m$  of the same types. The triad $\left(\Delta, \partial\Delta_+, f\right)$ is said to be $S$-transversal if
\begin{itemize}
\item for $d\geq d_0$, the germ $M_d(f)$ is smooth and its codimension in $\Sigma(d)$ does not depend on $d$. 
\item the intersection of $M_d(f)$ and $\mathcal{P}\left(\Delta,\partial\Delta_+,f\right)$ in $\Sigma(d)$ is transversal for $d\geq d_0$.
\end{itemize}
\end{definition}
\begin{remark}
Instead of types of isolated singular points one can consider other properties of polynomials which can be localized, are invariant under the torus action, and the corresponding strata are smooth. Then one can speak of the respective transversality in the sense of Definition \ref{S-transversality}, and prove a patchworking theorem similar to that discussed below. 

\end{remark}
\subsection{General gluing theorem}
Let $\Delta$ be an integer convex $n$-dimensionnal polytope in $\R_+^n$, and let $\cup_{i\in I}\Delta_i$ be a subdivision of $\Delta$. For any $i\in I$, take a polynomial $f_i$ such that the polynomials $f_i$ verify the following properties:
\begin{itemize}
\item for each $i\in I$, one has $f_i\in\mathcal{P}(\Delta_i)$,
\item if $\Gamma=\Delta_i\cap\Delta_j$, then $f_i^\Gamma=f_j^\Gamma$,
\item for each $i\in I$, the polynomial $f_i$ is PNS.
\end{itemize}
The polynomials $f_i$ define an unique polynomial $f=\sum_{w\in\Delta\cap\Z^n}a_wx^w$, such that $f^{\Delta_i}=f_i$ for all $i\in I$. Let $G$ be the adjacency graph of the subdivision $\cup_{i\in I}\Delta_i$. Define $\mathcal{G}$ to be the set of oriented graphs $\Gamma$ with support $G$ and without oriented cycles. For $\Gamma\in\mathcal{G}$, denote by $\partial\Delta_{i,+}$ the union of facets of $\Delta_i$, which correspond to the arcs of $\Gamma$ coming in $\Delta_i$.
\begin{theorem}[Shustin, see \cite{Shustin}]
\label{patchworksingular}
Assume that the subdivision $\cup_{i\in I}\Delta_i$ of $\Delta$  is convex and that there exists $\Gamma\in\mathcal{G}$ such that the triad $\left(\Delta_i,\partial\Delta_{i,+}, f_i\right)$ is S-transversal. Then, there exists a PNS polynomial $f\in\mathcal{P}(\Delta)$ such that
$$
\mathcal{S}(f)=\sum_{i\in I} \mathcal{S}(f_i),
$$
and the triad $\left(\Delta,\emptyset, f\right)$ is S-transversal.
\end{theorem}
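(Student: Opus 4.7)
The plan is to obtain $f$ as a small perturbation of a Viro-style patchworking polynomial assembled from the $f_i$. Using the convex piecewise-linear function $\nu:\Delta\to\R$ whose linearity domains are the $\Delta_i$, I would consider the one-parameter family
$$
F_t(z)=\sum_{w\in\Delta\cap\Z^n}a_w\, t^{\nu(w)}z^w,
$$
where the coefficients $a_w$ are those of the unique polynomial whose truncation to each $\Delta_i$ equals $f_i$ (this is well-defined by the compatibility on common faces). After the cell-by-cell monomial rescalings dictated by $\nu$, the family $F_t$ degenerates as $t\to 0^+$ so that, in each local chart, it carries a deformed copy of $Z(f_i)$ together with perturbed replicas of every singular point of $f_i$.

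The core step is to tune the free coefficients of $F_t$ so that the deformed singularities persist with the prescribed types. I would induct along a topological ordering of the acyclic graph $\Gamma\in\mathcal{G}$. When the induction reaches the cell $\Delta_i$, the facets in $\partial\Delta_{i,+}$ correspond to already-processed cells and have coefficients already fixed; the remaining free parameters therefore lie in $\mathcal{P}(\Delta_i,\partial\Delta_{i,+},f_i)$. By $S$-transversality, the stratum $M_d(f_i)$ meets $\mathcal{P}(\Delta_i,\partial\Delta_{i,+},f_i)$ transversally at $f_i$, so an implicit function argument applied uniformly in $t$ produces, for every sufficiently small $t>0$, a correction of the free coefficients on $\Delta_i$ which restores singularities of the required types near each $v_j\in\mathrm{Sing}(f_i)$. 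Acyclicity of $\Gamma$ ensures these successive corrections never invalidate earlier ones; the induction closes, and the resulting $f\in\mathcal{P}(\Delta)$ satisfies $\mathcal{S}(f)=\sum_{i\in I}\mathcal{S}(f_i)$.

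The PNS hypothesis on each $f_i$ plays two complementary roles. First, it prevents singular points from escaping to coordinate tori during the degeneration: each truncation $f_i^{\Gamma'}$ to a proper face $\Gamma'$ cuts out a nonsingular hypersurface, so no singularities of $F_t$ can accumulate on boundary strata as $t\to 0$. Second, any truncation $f^{\Gamma'}$ of the assembled polynomial to a proper face of $\Delta$ coincides, under the patchworking identifications of $\hat{\Delta}$, with one of the nonsingular $f_i^{\Gamma'}$, so $f$ is itself PNS. The $S$-transversality of $(\Delta,\emptyset,f)$ then follows by piecing together the local statements at each $v_j$: since $\mathcal{P}(\Delta)$ carries no boundary coefficient constraints, the codimension of $M_d(f)$ in $\Sigma(d)$ equals the sum of the codimensions supplied by the pieces, and the local transversalities combine into a direct sum.

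The hard part, I expect, is the $t$-uniform control of these perturbations: one must verify that the implicit function corrections at each inductive step can be chosen of size commensurate with the Viro scaling $t^{\nu(w)}$, so that the corrected family remains a genuine patchworking of the $f_i$ and does not alter the truncations beyond the already-processed facets. Equivalently, one has to check that the $S$-transversality ambient data transport compatibly through the toric rescalings built into $F_t$ — this is the technical heart of Shustin's argument and is exactly what the acyclic orientation on $G$ is designed to accommodate.
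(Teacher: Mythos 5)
The paper does not itself prove this theorem --- it cites Shustin directly --- but the proof of Theorem~\ref{theoremprincipal} reproduces the same mechanism in an adapted form, and that is the right benchmark for your outline. Your plan matches it in substance: start from the Viro-scaled family $F_t$, invoke $S$-transversality on each cell, and use the acyclicity of $\Gamma$ to combine the local corrections. The meaningful difference is in how the combination is organized. You propose a cell-by-cell induction along a topological ordering of $\Gamma$, applying the implicit function theorem at each stage with the coefficients on the incoming facets treated as already determined. The paper, following Shustin, instead writes down all the singularity-defining equations $\varphi_r^{(h)}=0$ simultaneously, across every cell, plugs in the $t$-rescaled coefficients $A_{i,j}t^{\nu^h_P(j)}$, etc., and proves a single Jacobian nondegeneracy (Lemma~\ref{lemmaShustin}): at $t=0$ the full Jacobian over $\bigcup_h\Xi_h$ is block-triangular with diagonal blocks nondegenerate by $S$-transversality, so one application of the implicit function theorem produces the corrected coefficient functions. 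The two routes encode the same idea --- block-triangularity is precisely the matrix shadow of your topological ordering --- but the one-shot formulation sidesteps the ``$t$-uniform control across induction steps'' concern you flag as the technical heart: instead of tracking that each inductive correction stays within the Viro scaling and does not disturb prior cells, one observes that the off-diagonal blocks vanish at $t=0$ and invokes continuity. One caution: the closing claim that $S$-transversality of $(\Delta,\emptyset,f)$ ``follows by piecing together'' is not automatic; in Shustin's argument this requires a separate codimension count verifying that the assembled singularity stratum near $f$ has codimension $\sum_i\operatorname{codim}S_i$ and meets the full space $\mathcal{P}(\Delta)$ transversally, which is where the absence of boundary constraints on $\Delta$ and the PNS hypothesis both earn their keep rather than being formalities.
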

\begin{remark}
Let $\nu:\Delta\rightarrow\R$ be a function certifying the convexity of the subdivision of $\Delta$. Then, the polynomial $f$ can be chosen of the form $f_{t_0}$, for $t_0$ positive and small enough, where
$$
f_t=\sum_{i\in\Delta\cap\Z^{n}}A_i(t)t^{\nu(i)}z^{i},
$$
and $\vert A_i(t)-a_i\vert \leq Kt$, for a positive constant $K$. The polynomial $f_t$ is a modified version of Viro polynomial. Suppose that the polynomials $f_i$ are real. Then, the polynomial $f_t$ can also be chosen real and as in Viro's theorem, for $t$ positive and small enough, the pairs $\left( \hat{\Delta},\pi_{\Delta}(C(f_t))\right)$ and $\left( \hat{\Delta},\pi_{\Delta}(\bigcup_{i\in I} C(f_i))\right)$ are homeomorphic.
\end{remark}

\subsection{S-transversality criterion}
Let $n=2$, i.e., polynomials $f_i$ define curves in toric surfaces.
In \cite{Shustin}, Shustin defined a non-negative integer topological invariant $b(w)$ of isolated planar curve singular points $w$ such that, if $f_i$ is irreducible and
$$
\sum_{w\in \mathrm{Sing}(f_i)}b(w)<\sum_{\sigma\not\subset\partial\Delta_{i,+}} \mathrm{length}(\sigma),
$$  
then the triple $\left(\Delta_i,\partial\Delta_{i,+}, f_i\right)$ is S-transversal. Here, $\mathrm{length}(\sigma)$ denotes the integer length of $\sigma$. Recall that
\begin{itemize}
\item if $w$ is a node, then $b(w)=0$,
\item if $w$ is a cusp, then $b(w)=1$,
\end{itemize}
\begin{example}
If under the hypotheses of Theorem \ref{patchworksingular}, $n=2$, the curves 
$$\overline{\lbrace f_i=0\rbrace}\subset Tor(\Delta_i),$$ $i\in I$, are irreducible and have only ordinary nodes as singularities, then there is an oriented graph $\Gamma\in\mathcal{G}$ such that all the triples $\left(\Delta_i,\partial\Delta_{i,+}, f_i\right)$ are transversal. Indeed, for any common edge $\sigma\subset\Delta_i\cap\Delta_j$, one can choose the corresponding arc of $\Gamma$ to be orthogonal to $\sigma$. Then orient the arcs of $\Gamma$ so that they form angles in the interval $\left]-\frac{\pi}{2},\frac{\pi}{2}\right]$ with the horizontal axis. Then
$$
\partial\Delta_{i,+}\neq \partial\Delta_i,
$$
for $i\in I$. The above criterion implies transversality.
\end{example}

\section{Orientability of closed two-dimensional submanifolds of $(\RP^1)^3$}
\label{lowdegree}
Identify $H_2\left((\RP^1)^3 \: ; \: \Z/2\Z\right)$ with $(\Z/2\Z)^3$ by considering generators $x_1,x_2,x_3$ given by 
\begin{itemize}
\item $x_1=\left[\lbrace p \rbrace\times \RP^1\times \RP^1\right]$,
\item $x_2=\left[\RP^1\times \lbrace p \rbrace \times \RP^1\right]$,
\item $x_3=\left[\RP^1\times \RP^1\times \lbrace p \rbrace \right]$.
\end{itemize}

Identify $H_1\left((\RP^1)^3 \: ; \: \Z/2\Z \right)$ with $(\Z/2\Z)^3$ by considering generators $y_1,y_2,y_3$ given by 
\begin{itemize}
\item $y_1=\left[\RP^1\times \lbrace p \rbrace \times \lbrace q \rbrace \right]$,
\item $y_2=\left[\lbrace p \rbrace \times \RP^1 \times \lbrace q \rbrace\right]$,
\item $y_3=\left[\lbrace p \rbrace \times \lbrace q \rbrace \times \RP^1 \right]$.
\end{itemize}   
With these identifications, the intersection product $$
H_2\left((\RP^1)^3 \: ; \: \Z/2\Z \right)\times H_1\left((\RP^1)^3\: ; \: \Z/2\Z \right)\rightarrow \Z/2\Z 
$$ is given by the following map:
$$
\begin{array}{ccccc}
Q & : & (\Z/2\Z)^3\times (\Z/2\Z)^3 & \to & \Z/2\Z \\
 & & \left((a_1,a_2,a_3),(b_1,b_2,b_3)\right) & \mapsto & a_1b_1+a_2b_2+a_3b_3.  \\
\end{array}
$$
If $X$ is a real algebraic surface of tridegree $(d_1,d_2,d_3)$ in $(\CP^1)^3$, it follows from the identifications made above that the homology class of $\R X$ is represented in $(\Z/2\Z)^3$ by 
$$
\left[\R X\right] = \left(d_1\mod 2\: , d_2\mod 2 \: , d_3\mod 2\right).
$$ 
Consider the case of a real algebraic surface $X$ of tridegree $(d_1,d_2,1)$ in $(\CP^1)^3$. Such a surface is given by a polynomial 
$$
u_3 P(u_1,v_1,u_2,v_2)+v_3Q(u_1,v_1,u_2,v_2),
$$ where $P$ and $Q$ are homogeneous polynomials of degree $d_i$ in the variables $(u_i,v_i)$, for $1\leq i\leq 2$. Assume that $\left\lbrace P=0 \right\rbrace$ and $\left\lbrace Q=0 \right\rbrace$ intersect transversely. The projection $\pi\vert_X$ on the two first factors identify $X$ with the blow up of $(\CP^1)^2$ at the $2d_1d_2$ intersection points of $\lbrace P=0 \rbrace$ and $\lbrace Q=0\rbrace $. If $2m$ points of intersections of $\lbrace P=0 \rbrace$ and $\lbrace Q=0 \rbrace$ are real, then 
$$
\R X\simeq (\RP^1\times\RP^1)\#_{2m}\RP^2,
$$ 
where $\#$ denotes the connected sum. In particular, the surface $\R X$ is orientable if and only if $m=0$.
\begin{proposition}
\label{Eulerchareven}
The Euler characteristic of any closed two-dimensional submanifold $Z$ of $(\RP^1)^3$ is even.
\end{proposition}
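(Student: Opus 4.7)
The plan is to combine the parallelizability of $(\RP^1)^3$ with the mod-$2$ identity $\chi(Z)\equiv\langle w_2(Z),[Z]\rangle\pmod{2}$, valid for any closed (possibly non-orientable) surface.

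First I would observe that $(\RP^1)^3=(\SSS^1)^3$ is a Lie group, so its tangent bundle is trivial. Writing $\nu$ for the normal line bundle of $Z$ inside $(\RP^1)^3$, the Whitney sum formula applied to the trivial bundle $TZ\oplus\nu$ yields $w_1(\nu)=w_1(Z)$ and $w_2(Z)=w_1(Z)\cdot w_1(\nu)=w_1(Z)^2$. Next, letting $i\colon Z\hookrightarrow(\RP^1)^3$ denote the inclusion and $\alpha\in H^1((\RP^1)^3;\Z/2\Z)$ the Poincar\'e dual of $[Z]$, one has $w_1(\nu)=i^{*}\alpha$ by the standard description of the top Stiefel--Whitney class of a normal bundle. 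Combining this with the previous step and moving the pairing up to $(\RP^1)^3$ via Poincar\'e duality gives
\[
\chi(Z)\equiv\langle (i^{*}\alpha)^{2},[Z]\rangle = \langle \alpha^{3},[(\RP^1)^3]\rangle \pmod{2}.
\]

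To conclude I would use that $H^{*}((\RP^1)^3;\Z/2\Z)$ is the exterior algebra $\Lambda(x_1,x_2,x_3)$ with $x_i^{2}=0$. The characteristic-$2$ Frobenius yields $\alpha^{2}=(\sum a_i x_i)^{2}=\sum a_i^{2}x_i^{2}=0$ for every $\alpha\in H^{1}$, so $\alpha^{3}=0$ and $\chi(Z)$ is even. The most delicate point in this plan is not the final algebraic computation but the preliminary identity $\chi(Z)\equiv\langle w_2(Z),[Z]\rangle\pmod{2}$ in the non-orientable case; I would justify it either via Wu's formula on surfaces, or by observing that both sides are invariants of unoriented cobordism which agree on the generator $\RP^{2}$.
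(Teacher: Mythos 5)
Your argument is correct, and it is a genuinely different proof from the one in the paper. The paper argues geometrically: after a coordinate change it takes a real algebraic surface $S$ of tridegree $(1,a,b)$ transverse to $Z$ with $[Z]+[S]=0$, performs a checkerboard coloring of $(\RP^1)^3\setminus(Z\cup S)$ to obtain a compact \emph{oriented} $3$-manifold $Q$ with $\chi(\partial Q)=\chi(Z)+\chi(S)$, and concludes from the parity of $\chi(\partial Q)$ (boundary of an oriented $3$-manifold) and of $\chi(S)$ (a connected sum of an even number of $\RP^2$'s, by the paper's earlier description of tridegree-$(d_1,d_2,1)$ surfaces). You instead run a characteristic-class computation: parallelizability of $(\RP^1)^3\cong T^3$ gives $w_1(Z)=w_1(\nu)$ and $w_2(Z)=w_1(\nu)^2$ by Whitney, the identity $w_1(\nu)=i^*\alpha$ for $\alpha=\mathrm{PD}[Z]$ converts the pairing $\langle w_2(Z),[Z]\rangle\equiv\chi(Z)$ into $\langle\alpha^3,[T^3]\rangle$, which vanishes since every degree-one class in $H^*(T^3;\Z/2\Z)$ squares to zero. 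Each approach has its merits: yours is shorter, requires no input from real algebraic geometry, and immediately generalizes to any closed surface in any closed $3$-manifold whose mod-$2$ cohomology ring has trivial squaring on $H^1$; the paper's is more elementary (no Stiefel--Whitney theory, only orientability rather than parallelizability, and the Euler-characteristic additivity under cut-and-reglue) and fits naturally with the surrounding discussion of low-tridegree real algebraic surfaces. The one point you flag as delicate --- the identity $\chi(Z)\equiv\langle w_2(Z),[Z]\rangle\pmod 2$ in the nonorientable case --- is indeed standard (e.g.\ Milnor--Stasheff, Cor.\ 11.12), and either of your two suggested justifications works.
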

\begin{proof}
If $[Z]=0\in H_2\left((\RP^1)^3 \: ; \: \Z/2\Z \right)$, then $Z$ is the boundary of a compact $3$-manifold $M$ in $(\RP^1)^3$. Since $(\RP^1)^3$ is orientable, the $3$-manifold $M$ is orientable and $Z$ is also orientable. It follows that the Euler characteristic of $Z$ is even. Assume now that $[Z]\neq 0\in H_2\left((\RP^1)^3 \: ; \: \Z/2\Z \right)$.
Up to a change of coordinates on $(\RP^1)^3$, one can assume that $[Z]=(1,a,b)$, with $a,b\in \Z/2\Z$. Consider a real algebraic surface $S$ of tridegree $(1,a,b)$ transverse to $Z$. Then, one has $[Z\cup S]=[Z]+[S]=0$, and the union $Z\cup S$ bounds in $(\RP^{1})^{3}$. Thus, one can color the complement $(\RP^{1})^3\setminus (Z\cup S)$ into two colors in such a way that the components adjacent from the different sides to the same (two-dimensional) piece of $Z\cup S$ would be of different colors. It is a kind of checkerboard coloring. Consider the disjoint sum $Q$ of the closures of those components of $(\RP^{1})^{3}\setminus (Z\cup S)$ which are colored with the same color. It is a compact $3$-manifold, and it is oriented since each of the components inherits orientation from $(\RP^{1})^{3}$. The boundary of this $3$-manifold is composed of pieces of $Z$ and $S$. It can be thought of as the result of cutting both surfaces along their intersection curve and regluing. The intersection curve is replaced by its two copies, while the rest part of $Z$ and $S$ does not change. Since the intersection curve consists of circles, its Euler characteristic is zero. Thus, one has 
$$
\chi(\partial Q)=\chi(Z)+\chi(S).
$$
Since the surface $S$ is the connected sum of an even number of copies of $\RP^2$, the Euler characteristic $\chi(S)$ of $S$ is even. On the other hand, $\chi(\partial Q)$ is even since $\partial Q$ inherits orientation from $Q$. Thus, one has
$$
\chi(Z)=0\mod 2.
$$ 
\end{proof}
Proposition \ref{Eulerchareven} implies that we cannot embed any connected sum of an odd number of copies of $\RP^2$ in $(\RP^1)^3$.
One has the following characterisation of orientability.
\newpage
\begin{proposition}
\label{proporientability}
Let $Z$ be a two-dimensional submanifold of $(\RP^1)^3$. The manifold $Z$ is nonorientable if and only if there exists a circle $\SSS^1\hookrightarrow Z$ such that $[\SSS^1]\cdot [Z]=1$, where 
$$
[\SSS^1]\in H_1\left((\RP^1)^3\: ; \: \Z/2\Z \right)
$$ is the homology class of the circle $\SSS^1$ and 
$$
[Z]\in H_2\left((\RP^1)^3\: ; \: \Z/2\Z \right)
$$ is the homology class of the manifold $Z$.
\end{proposition}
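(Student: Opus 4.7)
The plan is to reduce the equivalence to a statement about the normal line bundle $\nu$ of $Z$ in $M=(\RP^1)^3$, and then to compute the mod-$2$ ambient intersection number $[\SSS^1]\cdot[Z]$ by a local normal-pushoff argument.

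First I would observe that $M$, being a product of three copies of the circle $\RP^1$, is orientable, so $w_1(TM)=0$. Combining this with the Whitney sum decomposition $TM|_Z = TZ \oplus \nu$ yields $w_1(\nu)=w_1(Z)$. Thus $Z$ is nonorientable if and only if $w_1(\nu)\neq 0$, equivalently there is a class $\alpha\in H_1(Z;\Z/2\Z)$ with $\langle w_1(\nu),\alpha\rangle = 1$. Since any mod-$2$ first homology class of a closed surface can be represented by a disjoint union of embedded simple closed curves, at least one component $\SSS^1\hookrightarrow Z$ satisfies $\langle w_1(\nu), [\SSS^1]\rangle=1$, i.e.\ $\nu|_{\SSS^1}$ is the Möbius line bundle.

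Next I would identify the ambient intersection number $[\SSS^1]\cdot [Z]$ with $\langle w_1(\nu),[\SSS^1]\rangle$ for any embedded circle $\SSS^1 \subset Z$. Choose a tubular neighborhood $T\subset M$ of $\SSS^1$, thin enough that $T\cap Z$ deformation retracts onto $\SSS^1$, and pick a generic section $s$ of $\nu|_{\SSS^1}$. Using $s$ one pushes $\SSS^1$ off $Z$ inside $T$ to a parallel curve $\SSS^1_s$, homologous to $\SSS^1$ in $M$, and by construction $\SSS^1_s\cap Z$ coincides with the transverse zero set of $s$. When $\nu|_{\SSS^1}$ is trivial, $s$ can be chosen nowhere vanishing, so $[\SSS^1]\cdot [Z]=0$; when $\nu|_{\SSS^1}$ is Möbius, every section has an odd number of zeros mod $2$, so $[\SSS^1]\cdot [Z]=1$.

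Combining these two steps gives both directions of the equivalence: the forward direction is obtained by producing the desired $\SSS^1$ from $w_1(Z)\neq 0$ (Step 1) and certifying $[\SSS^1]\cdot [Z]=1$ via the pushoff (Step 2); the converse is immediate, since if some embedded $\SSS^1$ satisfies $[\SSS^1]\cdot [Z]=1$, Step 2 forces $\nu|_{\SSS^1}$ to be nontrivial, hence $w_1(\nu)\neq 0$, hence by Step 1 $Z$ is nonorientable. The main obstacle is Step 2: one has to verify that all intersections of $\SSS^1_s$ with $Z$ are captured inside $T$, so that no stray mod-$2$ contributions appear from elsewhere, and that in the Möbius case a generic section really contributes an odd intersection count. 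Both points are standard consequences of the classification of line bundles over $\SSS^1$ and of transversality, but they are where the geometric content of the proposition really lies.
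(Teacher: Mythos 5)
Your argument is essentially the same as the paper's: both use orientability of $(\RP^1)^3$ together with the Whitney sum decomposition of the tangent/normal bundles to identify the orientation obstruction of $Z$ along an embedded circle with the nontriviality of the normal line bundle $\mathcal{N}_{Z,(\RP^1)^3}$ restricted to that circle. The only difference is one of emphasis: the paper phrases Step 1 in terms of disorienting circles and restricted normal bundles rather than $w_1$, and leaves the pushoff identification $[\SSS^1]\cdot[Z]=\langle w_1(\nu),[\SSS^1]\rangle$ implicit, whereas you spell it out.
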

\begin{proof}
The manifold $Z$ is nonorientable if and only if there exists a disorienting circle $\SSS^{1} \hookrightarrow Z$. Recall that $\SSS^{1}\subset Z$ is a disorienting circle if and only if the normal bundle $\mathcal{N}_{\SSS^{1},Z}$ of $\SSS^{1}$ in $Z$ is nontrivial. On the other hand, one has the following equality:
$$
\mathcal{N}_{\SSS^{1},(\RP^1)^3}=\mathcal{N}_{\SSS^{1},Z}\oplus\left.\left(\mathcal{N}_{Z,(\RP^1)^3}\right)\right\vert_{\SSS^{1}},
$$ 
where $\mathcal{N}_{\SSS^{1},(\RP^1)^3}$ denotes the normal bundle of $\SSS^{1}$ in $(\RP^1)^3$ and $\left.\left(\mathcal{N}_{Z,(\RP^1)^3}\right)\right\vert_{\SSS^{1}}$ denotes the restriction to $\SSS^{1}$ of the normal bundle of $Z$ in $(\RP^1)^3$. The restriction of the tangent bundle of $(\RP^1)^3$ to $\SSS^{1}$ is the Whitney sum of the normal and the tangent bundle of $\SSS^1$. Thus, the normal bundle  
$\mathcal{N}_{\SSS^{1},(\RP^1)^3}$ is trivial, so the normal bundle $\mathcal{N}_{\SSS^{1},Z}$ is nontrivial if an only if the normal bundle $\left.\left(\mathcal{N}_{Z,(\RP^1)^3}\right)\right\vert_{\SSS^{1}}$ is nontrivial. One concludes that $Z$ is nonorientable if and only if there exists an embedding $\SSS^{1} \hookrightarrow Z $ such that $[\SSS^{1}]\cdot [Z]=1$. 
\end{proof}
\begin{corollary}
Let $X$ be a nonsingular real algebraic surface of degree $(k,l,2)$ in $(\CP^1)^3$ given by a polynomial
$$
u_3^2 P+u_3v_3 Q+v_3^2 R,
$$ where $P$, $Q$ and $R$ are homogeneous polynomials of degree $k$ in $(u_1,v_1)$ and of degree $l$ in $(u_2,v_2)$. Then, $\R X$ is nonorientable if an only if there exists an embedding of a circle $\psi:\SSS^1\hookrightarrow (\RP^1)^2$ such that
\begin{itemize}
\item $\psi(\SSS^1)\subset \lbrace Q^2-4PR\geq 0\rbrace$.
\item The homology class $[\psi(\SSS^1)]\in H_1\left((\RP^1)^2 \: ; \: \Z/2\Z\right)$ is represented by $(a,b)\in\Z/2\Z$ with $ak+bl=1$.
\end{itemize}
\end{corollary}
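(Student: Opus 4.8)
The plan is to combine Proposition~\ref{proporientability} with the description of $X$ as a double covering of a region in $(\RP^{1})^{2}$.

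\emph{Setup and homological reformulation.} Set $\Omega=\{Q^{2}-4PR\geq 0\}\subseteq(\RP^{1})^{2}$. For $x\in(\RP^{1})^{2}$ the fibre of $\pi|_{\R X}$ over $x$ consists of the real zeros $[u_{3}:v_{3}]$ of the binary quadratic form $u_{3}^{2}P(x)+u_{3}v_{3}Q(x)+v_{3}^{2}R(x)$, hence is nonempty exactly when $(Q^{2}-4PR)(x)\geq 0$; so $\pi(\R X)=\Omega$ and $\pi|_{\R X}\colon \R X\to\Omega$ is a double covering branched along $\partial\Omega=\{Q^{2}-4PR=0\}$, nonsingularity of $X$ ensuring that $\partial\Omega$ is a smooth curve and $\Omega$ a compact surface with boundary. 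Using $[\R X]=(k\bmod2,\,l\bmod2,\,0)$ and the intersection product formula, every circle $C\hookrightarrow\R X$ with $[C]=(c_{1},c_{2},c_{3})$ has $[C]\cdot[\R X]=kc_{1}+lc_{2}$; and since $\pi_{*}$ sends $y_{1},y_{2},y_{3}$ to $[\RP^{1}\times\{p\}]$, $[\{p\}\times\RP^{1}]$ and $0$, we get $\pi_{*}[C]=(c_{1},c_{2})=:(a,b)$, whence $[C]\cdot[\R X]=ak+bl$ and $\pi(C)\subseteq\Omega$. By Proposition~\ref{proporientability}, $\R X$ is thus nonorientable if and only if $\R X$ carries a circle $C$ with $\pi(C)\subseteq\Omega$ and $[\pi(C)]=(a,b)$, $ak+bl=1$; so it remains to show this happens precisely when $\Omega$ contains an \emph{embedded} circle of the same homological type.

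\emph{From $\R X$ to $\Omega$.} Given such a $C$, the loop $\pi(C)$ lies in $\Omega$; after a homotopy inside $\Omega$ we may assume it lies in $\mathrm{int}(\Omega)$, and after a generic perturbation it is an immersed loop with finitely many transverse double points. Resolving all the double points yields a disjoint union of embedded circles $\gamma_{1},\dots,\gamma_{m}\subseteq\mathrm{int}(\Omega)$ with $\sum_{j}[\gamma_{j}]=[\pi(C)]$ in $H_{1}((\RP^{1})^{2};\Z/2\Z)$. Since $(a,b)\mapsto ak+bl$ is linear and $\sum_{j}[\gamma_{j}]$ evaluates to $1$, some $\gamma_{j}$ does too; this is the required $\psi(\SSS^{1})$.

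\emph{From $\Omega$ to $\R X$.} Conversely, let $\gamma=\psi(\SSS^{1})\subseteq\Omega$ be embedded with $[\gamma]=(a,b)$, $ak+bl=1$; isotoping it off $\partial\Omega$ we may assume $\gamma\subseteq\mathrm{int}(\Omega)$, over which $\pi|_{\R X}$ is an honest double covering. If $\pi^{-1}(\gamma)$ is disconnected it is the union of two circles each mapped homeomorphically onto $\gamma$, and either of them, call it $C$, has $[\pi(C)]=(a,b)$, hence $[C]\cdot[\R X]=1$ and $\R X$ is nonorientable by Proposition~\ref{proporientability}. If $\pi^{-1}(\gamma)$ is connected, assume for contradiction that $\R X$ is orientable, and let $\sigma\colon\R X\to\R X$ be the deck involution (exchanging the two roots of the quadratic form, fixing $\pi^{-1}(\partial\Omega)$ pointwise). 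Let $\Omega_{0}$ be the component of $\mathrm{int}(\Omega)$ meeting $\gamma$; then $\pi^{-1}(\Omega_{0})$ is connected (the monodromy around $\gamma$ is nontrivial), it lies in a single component $\R X_{0}$ of $\R X$, and $\sigma(\R X_{0})=\R X_{0}$. The fixed locus $\pi^{-1}(\partial\Omega_{0})$ of $\sigma|_{\R X_{0}}$ is a nonempty $1$-manifold (since $\partial\Omega\neq\emptyset$ forces $\partial\Omega_{0}\neq\emptyset$), so $\sigma|_{\R X_{0}}$, acting on the connected orientable surface $\R X_{0}$, is orientation-reversing; hence so is its free restriction to $\pi^{-1}(\Omega_{0})$. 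But a free orientation-reversing involution of a connected orientable surface has nonorientable quotient, contradicting that the quotient $\Omega_{0}$ lies in $(\RP^{1})^{2}$. So $\R X$ is nonorientable.

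\emph{Main obstacle.} The delicate point is the last case, $\pi^{-1}(\gamma)$ connected: there no lift of $\gamma$ is itself a disorienting circle, and one must instead exploit the branched-covering structure of $\pi|_{\R X}$, through the deck involution $\sigma$, to detect nonorientability. The degenerate situation $\partial\Omega=\emptyset$ (that is, $Q^{2}-4PR>0$ everywhere), in which $\pi|_{\R X}$ is unbranched, must be treated separately.
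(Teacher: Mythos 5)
Your forward direction coincides with the paper's: project the disorienting circle, resolve the double points of the immersed image, and use linearity of $(a,b)\mapsto ak+bl$ to extract one embedded circle with the right evaluation. The reverse direction is where you genuinely diverge, and your route is the better one. The paper perturbs $\gamma=\psi(\SSS^1)$ into $\{Q^2-4PR>0\}$, observes that the two sheets of $\pi|_{\R X}$ are exchanged each time $\gamma$ crosses $\{P=0\}$, deduces that $\pi^{-1}(\gamma)$ is a single circle because $\#(\gamma\cap\{P=0\})\equiv ak+bl\equiv 1\pmod 2$, and then invokes Proposition \ref{proporientability} for that circle. But for a connected double cover $C=\pi^{-1}(\gamma)\to\gamma$ one has $\pi_*[C]=2[\gamma]=0$, so $[C]\cdot[\R X]=0$ and Proposition \ref{proporientability} applied to $C$ gives nothing; concretely, the preimage of an annular neighbourhood of $\gamma$ is an annulus, not a M\"obius band. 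Your deck-involution argument (a nonempty one-dimensional fixed locus forces $\sigma|_{\R X_0}$ to be orientation-reversing, while a free orientation-reversing involution of a connected orientable surface cannot have orientable quotient) is exactly what is needed to close this case, and it is correct. Note in passing that your ``disconnected preimage'' sub-case is vacuous: the monodromy of the covering around $\gamma$ equals $[\gamma]\cdot[\{P=0\}]=ak+bl=1$, so $\pi^{-1}(\gamma)$ is always connected; this redundancy is harmless.

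The case you set aside, $\partial\Omega=\emptyset$, that is $Q^2-4PR>0$ on all of $(\RP^1)^2$, cannot in fact be ``treated separately'': the statement fails there. In that case $\pi|_{\R X}$ is an unbranched double cover of the torus, so $\R X$ is one torus or two, hence orientable; yet $\Omega=(\RP^1)^2$ contains embedded circles of every class, in particular one with $ak+bl=1$ whenever $k$ or $l$ is odd. This is realized, for instance, in tridegree $(1,1,2)$ by taking $R=-P$ with $\{P=0\}\cap\{Q=0\}$ free of real points, so that $Q^2-4PR=Q^2+4P^2>0$ everywhere; a generic such choice gives a nonsingular $X$ whose real part is a torus, while $\RP^1\times\{pt\}$ has class $(1,0)$ and $1\cdot k+0\cdot l=1$. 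So the corollary implicitly requires the branch curve $\{Q^2-4PR=0\}$ to be nonempty (for nonsingular $X$ this is equivalent to $\{Q^2-4PR<0\}\neq\emptyset$), a hypothesis the paper's own proof also passes over in silence. Under that hypothesis your argument is complete and repairs the paper's.
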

\begin{proof}
Notice that $\R X$ is homeomorphic to the disjoint union of two copies of \linebreak $\lbrace Q^2-4PR\geq 0\rbrace$ attached to each other by a self-homeomorphism of $\lbrace Q^2-4PR = 0\rbrace$. Assume that the manifold $\R X$ is nonorientable. Then, by Proposition \ref{proporientability}, there exists an embedding of a circle $\phi:\SSS^1\hookrightarrow \R X$ such that the homology class 
$$
[\phi(\SSS^1)]\in H_1\left((\RP^1)^3\: ; \: \Z/2\Z \right)
$$ is represented by $(a,b,c)\in (\Z/2\Z)^3$ with $ak+bl=1$. Using a small perturbation of the embedding $\phi$, one obtains an immersion $\pi\circ\phi$ of $\SSS^1$ in $(\RP^1)^2$, where $\pi:(\RP^1)^3\rightarrow (\RP^1)^2$ denotes the projection on the two first factors. This immersion satisfies
\begin{itemize}
\item $\pi\circ\phi(\SSS^1)\subset \lbrace Q^2-4PR\geq 0\rbrace$ and
\item $[\pi\circ\phi(\SSS^1)]=(a,b)$, with $ak+bl=1$.
\end{itemize}
Perturbing arbitrarily each double point of $\pi\circ\phi(\SSS^1)$, one obtains a collection of circles $\SSS^1_1,\cdots , \SSS^1_n$, with $\left[\SSS^1_j\right]=(a_j,b_j)\in(\Z/2\Z)^2$, such that
\begin{itemize}
\item $\SSS^1_j\subset \lbrace Q^2-4PR\geq 0\rbrace$, for all $1\leq j\leq n$,
\item $\sum_{j=1}^n a_j=a$ and $\sum_{j=1}^n b_j=b$.
\end{itemize} 
Then there exists $1\leq j\leq n$ such that $a_jk+b_jl=1$. 
\\ Reciprocally, assume that there exists an embedding $\psi:\SSS^1\hookrightarrow (\RP^1)^2$ such that
\begin{itemize}
\item $\psi(\SSS^1)\subset \lbrace Q^2-4PR\geq 0\rbrace$,
\item $[\psi(\SSS^1)]=(a,b)\in(\Z/2\Z)^2$ with $ak+bl=1$.
\end{itemize}
Using a small perturbation of the embedding $\psi$, one can assume that 
$$
\psi(\SSS^1)\subset \lbrace Q^2-4PR > 0\rbrace,
$$ 
and that $\psi(\SSS^1)$ intersect $\lbrace P=0 \rbrace$ transversely.
Over the set $\lbrace Q^2-4PR > 0\rbrace$, the map $\pi\vert_{\R X}$ is a two-to-one map. Then, the preimage of $\psi(\SSS^1)$ under the projection $\pi\vert_{\R X}$ is either a circle or a union of two circles. Consider 
$$
\R X\setminus\lbrace P=0 \rbrace \subset (\RP^1)^2\times \R.
$$ 
If $x\in\psi(\SSS^1)\setminus \lbrace P=0 \rbrace$, one can order the two preimages $s_1(x)$ and $s_2(x)$ of $x$ under $\pi$ so that the vertical coordinate of $s_1(x)$ is smaller than the vertical coordinate of $s_2(x)$. One can see that when $x$ goes through $\lbrace P=0 \rbrace$, the order on $s_1(x)$ and $s_2(x)$ changes.
Then, the preimage of $\psi(\SSS^1)$ under the projection $\pi\vert_{\R X}$ is a circle if and only if the number of intersections of $\psi(\SSS^1)$ with $\lbrace P=0 \rbrace$ is odd. Since $ak+bl=1$, the preimage of $\psi(\SSS^1)$ is a circle, and it follows from Proposition \ref{proporientability} that $\R X$ is nonorientable.
\end{proof}
\section{Double covering of certain blow-ups of $(\CP^1)^2$}
\label{doublecovering}
We describe a method of construction of real algebraic surfaces in $(\CP^1)^3$ of tridegree $(2k,2l,2)$ for any $(k,l)$ with $k\geq 1$ and $l\geq 1$. Consider a real algebraic surface $Z$ in $(\CP^1)^3$ defined by 
the polynomial
$
P^2+\varepsilon Q,
$
where $P$ is a real polynomial of tridegree $(k,l,1)$, the polynomial $Q$ is a real polynomial of tridegree $(2k,2l,2)$ and $\varepsilon$ is some small positive parameter. If $\lbrace P=0\rbrace$ and $\lbrace Q=0\rbrace$ are nonsingular and intersect transversely, the surface $Z$ is also nonsingular, and it is a small deformation of the double covering of $\lbrace P=0 \rbrace$ ramified along $\lbrace P=0 \rbrace \cap \lbrace Q=0 \rbrace$. More precisely, the surface $Z$ is obtained from an elementary equivariant deformation of the subvariety 
$$
Z_0=\lbrace U^{2}+Q=0 \: , \: P=0 \rbrace\subset(\C^*)^4
$$
compactified in the toric variety associated to the cone with vertex $(0,0,0,2)$ over the parallelepiped
$$
Conv\left((2k,0,0),(0,2l,0),(2k,2l,0),(0,0,2),(2k,0,2),(0,2l,2),(2k,2l,2)\right).
$$
This deformation is obtained via considering the family
$$
Z_t=\lbrace U^{2}+Q=0 \: , \: P=tU \rbrace,
$$
for $0\leq t\leq \sqrt{\varepsilon}$.
The real part $\R Z$ is homeomorphic to the disjoint union of two copies of $(\RP^1)^3\cap\lbrace P=0\rbrace\cap\lbrace Q\leq 0\rbrace$ attached to each other by the identity map of $(\RP^1)^3\cap\lbrace P=0\rbrace\cap\lbrace Q=0\rbrace$. The polynomials $P$ can be written in the following form:
$$
P(u_i,v_i)=v_3 P_1(u_i,v_i)+
u_3 P_0(u_i,v_i),
$$
where $P_0$, $P_1$ are homogeneous polynomials of degree $k$ in $u_1,v_1$ and of degree $l$ in $u_2,v_2$. As explained in Section \ref{lowdegree}, the surface $\lbrace P=0\rbrace$ is the blow-up of $(\CP^1)^2$ at the $2kl$ points of intersections of $\lbrace P_0=0\rbrace$ and $\lbrace P_1=0\rbrace$. Consider the real algebraic curve $\lbrace P=0\rbrace\cap\lbrace Q=0\rbrace$ in $(\CP^1)^3$ and consider its image $D$ under the projection $\pi:(\CP^1)^3\rightarrow (\CP^1)^2$ forgetting the last factor. Let us compute the bidegree of $D$. One can see that the intersection of $\lbrace P=0 \rbrace $ with $\CP^1\times [0:1]\times \CP^1$ (resp., $[0:1]\times\CP^1\times \CP^1$) is a curve of bidegree $(k,1)$ (resp., $(l,1)$). The intersection of $\lbrace Q=0 \rbrace $ with $\CP^1\times [0:1]\times \CP^1$ (resp., $[0:1]\times\CP^1\times \CP^1$) is a curve of bidegree $(2k,2)$ (resp., $(2l,2)$). Then the real algebraic curve $\lbrace P=0\rbrace \cap \lbrace Q=0\rbrace$ intersects $\CP^1\times [0:1]\times \CP^1$ (resp., $ [0:1]\times\CP^1\times \CP^1$) in $4k$ points (resp., $4l$ points). Considering the projection $\pi$, one concludes that the real algebraic curve $D$ is of bidegree $(4k,4l)$. For $1\leq i\leq 2kl$, denote by $L_i$ the exceptional lines of $\lbrace P=0 \rbrace $ corresponding to the intersection points of $\lbrace P_0=0\rbrace\cap\lbrace P_1=0\rbrace$. Since the real algebraic surface $\lbrace Q=0 \rbrace$ is of tridegree $(2k,2l,2)$, it intersects any exceptional line $L_i$ in exactly $2$ points. Considering the projection $\pi$, one concludes that the real algebraic curve $D$ has $2kl$ double points, one double point at each intersection point of $\lbrace P_0=0\rbrace$ and $\lbrace P_1=0\rbrace$.
Conversely, one has the following proposition.
\begin{proposition}
\label{generalposition}
Let $\lbrace P_0=0\rbrace$ and $\lbrace P_1=0\rbrace$ be two nonsingular real algebraic curves of bidegree $(k,l)$ in $(\CP^1)^2$ intersecting transversely in $2kl$ points. Let $D$ be an irreducible real algebraic curve of bidegree $(4k,4l)$ in $(\CP^1)^2$ with $2kl$ ordinary double points, one double point at each intersection point of $\lbrace P_0=0\rbrace $ and $\lbrace P_1=0\rbrace$. Consider the blow-up of $(\CP^{1})^{2}$ at the $2kl$ points of intersections of $\lbrace P_0=0\rbrace$ and $\lbrace P_1=0\rbrace$, given by the polynomial
$$
P(u_i,v_i)=v_3 P_1(u_i,v_i)+
u_3 P_0(u_i,v_i).
$$
Then, there exists a real algebraic surface $\lbrace Q=0 \rbrace$ of tridegree $(2k,2l,2)$ in $(\CP^1)^3$ such that the strict transform of $D$ in $\lbrace P=0 \rbrace$ is given by the intersection $\lbrace P=0\rbrace\cap \lbrace Q=0\rbrace$.
\end{proposition}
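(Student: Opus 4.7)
The plan is to seek $Q$ in the form
$$
Q = A u_3^2 + B u_3 v_3 + C v_3^2,
$$
with $A, B, C$ bihomogeneous real polynomials of bidegree $(2k,2l)$ in $(u_1,v_1,u_2,v_2)$, and to extract the triple $(A,B,C)$ from a real defining polynomial $R$ of $D$ via a syzygy computation. On $\{P=0\}$ one has $u_3 P_0=-v_3 P_1$, and multiplying $Q$ by $P_0^2$ yields the identity
$$
P_0^2\cdot Q\big|_{\{P=0\}}=v_3^2\bigl(AP_1^2-BP_0P_1+CP_0^2\bigr).
$$
Above the complement of the $2kl$ intersection points of $\{P_0=0\}$ and $\{P_1=0\}$, the projection $\pi$ is an isomorphism, so $\{P=0\}\cap\{Q=0\}$ projects isomorphically onto the curve $\{\tilde R=0\}$, where $\tilde R:=AP_1^2-BP_0P_1+CP_0^2$ has bidegree $(4k,4l)$ and lies in the square of the ideal sheaf $\mathcal{I}$ of those points.

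The main step is the converse: every bidegree $(4k,4l)$ polynomial with multiplicity $\geq 2$ at each of the $2kl$ intersection points is of the form $\tilde R$. Since $P_0,P_1$ form a regular sequence, the sheaf $\mathcal{I}^2$ admits the resolution
$$
0\to\mathcal{O}(-3k,-3l)^{\oplus 2}\to\mathcal{O}(-2k,-2l)^{\oplus 3}\to\mathcal{I}^2\to 0,
$$
the right-hand map sending $(a_1,a_2,a_3)$ to $a_1 P_0^2 + a_2 P_0 P_1 + a_3 P_1^2$. Twisting by $\mathcal{O}(4k,4l)$ and invoking $H^1\bigl((\CP^1)^2,\mathcal{O}(k,l)\bigr)=0$ for $k,l\geq 0$, the associated long exact sequence gives the surjection
$$
H^0\bigl(\mathcal{O}(2k,2l)\bigr)^{\oplus 3}\twoheadrightarrow H^0\bigl(\mathcal{I}^2(4k,4l)\bigr).
$$
Applied to $R$, this produces $(A,B,C)$ with $\tilde R=R$; averaging with the complex-conjugate solution one makes the triple real.

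With $Q$ so defined, it remains to verify that $\{P=0\}\cap\{Q=0\}$ equals the strict transform $D^{\mathrm{str}}$ of $D$. Above the complement of the base points, the first identity above already identifies this intersection with $D$. At a base point $p$, choose local coordinates $(\xi,\eta)=(P_0,P_1)$ on $(\CP^1)^2$; the tangent cone of $D$ at $p$ is $C(p)\xi^2-B(p)\xi\eta+A(p)\eta^2$, which is nondegenerate by the ordinary double point hypothesis. Hence $(A(p),B(p),C(p))\neq(0,0,0)$, so $Q$ does not vanish identically on the exceptional fiber $E_p$, and a direct substitution shows that the two roots of $Q|_{E_p}$ correspond, under the blow-up identification $(u_3:v_3)=(P_1:-P_0)$, precisely to the two tangent directions of $D$ at $p$, i.e.\ to $D^{\mathrm{str}}\cap E_p$. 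Thus $\{P=0\}\cap\{Q=0\}$ contains no exceptional component and coincides with $D^{\mathrm{str}}$.

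The main obstacle is the cohomological surjectivity in the middle step; once the standard resolution of the square of a complete intersection ideal is in hand, it reduces to the usual vanishing $H^1\bigl((\CP^1)^2,\mathcal{O}(k,l)\bigr)=0$.
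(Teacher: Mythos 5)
Your argument is correct, and it takes a genuinely different route from the paper. The paper's proof is a dimension count: it computes, via two short exact sequences and Riemann--Roch on the curve $C_0=\{P_0=0\}$, the dimension of the linear system $\mathcal{A}$ of $(4k,4l)$-curves singular at the $2kl$ base points, and separately computes the dimension of the image of the restriction map from tridegree-$(2k,2l,2)$ surfaces to curves on $\{P=0\}$, observing that the two numbers agree. Your approach instead produces $Q$ directly: you solve the equation $CP_0^2 - BP_0P_1 + AP_1^2 = R$ by invoking the Koszul-type resolution $0\to\mathcal{O}(-3k,-3l)^{\oplus 2}\to\mathcal{O}(-2k,-2l)^{\oplus 3}\to\mathcal{I}^2\to 0$ of the square of a complete-intersection ideal, twisting, and using $H^1\bigl((\CP^1)^2,\mathcal{O}(k,l)\bigr)=0$. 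This replaces the paper's somewhat fiddly Riemann--Roch bookkeeping (which, incidentally, has a harmless off-by-one slip in $h^0(\mathcal{O}(2k,2l))$) by a single standard vanishing, and it makes transparent why the surjection holds and what the ambiguity in $Q$ is (the Koszul syzygies). You also supply the local verification on the exceptional fibers, matching the roots of $Q|_{E_p}$ with the tangent cone of $D$ at $p$, which the paper leaves implicit; together with the observation that the intersection is a divisor on $\{P=0\}$ with no exceptional component, this pins down the identification with the strict transform. One tiny point worth making explicit: the identity $P_0^2\, Q|_{\{P=0\}}=v_3^2\tilde R$ only covers the locus $\{P_0\neq 0, v_3\neq 0\}$; one should also record the symmetric identity $P_1^2\, Q|_{\{P=0\}}=u_3^2\tilde R$ so that the two charts together cover all of $\{P=0\}$ away from the exceptional fibers.
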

\begin{proof}
Denote by $x_1,...,x_{2kl}$ the intersection points of $\lbrace P_0=0 \rbrace$ and $\lbrace P_1=0 \rbrace$. Denote by $\mathcal{A}$ the linear system of curves of bidegree $(4k,4l)$ in $(\CP^1)^2$ with a singularity at each $x_i$.  We first show that the space $\mathcal{A}$ is of codimension $6kl$ in the space of curves of bidegree $(4k,4l)$ in $(\CP^1)^2$. Denote by $C_0$ the curve $\lbrace P_0=0 \rbrace$. One has the following exact sequence of sheaves:
$$
0\rightarrow\mathcal{O}_{\PP^1\times\PP^1}(D-C_0)\rightarrow
\mathcal{O}_{\PP^1\times\PP^1}(D)\rightarrow\mathcal{O}_{C_0}(D\mid_{C_0})\rightarrow 0,
$$
where $\mathcal{O}_{\PP^1\times\PP^1}(D-C_0)$ is the invertible sheaf associated to the divisor $D-C_0$, the sheaf $\mathcal{O}_{\PP^1\times\PP^1}(D)$ is the invertible sheaf associated to the divisor $D$, and 
$\mathcal{O}_{C_0}(D\mid_{C_0})$ denotes the restriction of $\mathcal{O}_{\PP^1\times\PP^1}(D)$ to $C_0$. 
Since the divisor $D-C_0$ is of bidegree $(3k,3l)$, for any $k>0$ and $l>0$ the invertible sheaf $\mathcal{O}_{\PP^1\times\PP^1}(D-C_0)$ is generated by its sections and one has $H^1(\PP^1\times\PP^1,\mathcal{O}_{\PP^1\times\PP^1}(D-C_0))=0$ (see for example \cite{Fulton}). Thus, the long exact sequence in cohomology associated to the above exact sequence splits. The first part of the long exact sequence is the following:
$$
0\rightarrow H^0(\PP^1\times\PP^1,\mathcal{O}(D-C_0))\rightarrow H^0(\PP^1\times\PP^1,\mathcal{O}(D))\overset{r}{\rightarrow}H^0(C_0,\mathcal{O}(D\mid_{C_0}))\rightarrow 0,
$$
where $r$ is the restriction map.
Denote by 
$$
E(x_1,\cdots ,x_{2kl})\subset H^0(\PP^1\times\PP^1,\mathcal{O}(D))
$$ 
the set of sections vanishing at least at order $2$ at $x_1,\cdots, x_{2kl}$. Then 
$$
\mathcal{A}=\PP(E(x_1,\cdots,x_{2kl})).
$$
Put $F(x_1,\cdots ,x_{2kl})=r(E(x_1,\cdots ,x_{2kl}))$. Denote by 
$$
G(x_1,\cdots ,x_{2kl})\subset H^0(\PP^1\times\PP^1,\mathcal{O}(D-C_0))
$$ 
the set of sections vanishing at $x_1,\cdots ,x_{2kl}$. One has then the following exact sequence:
$$
0\rightarrow G(x_1,\cdots ,x_{2kl})\rightarrow E(x_1,\cdots ,x_{2kl})\rightarrow F(x_1,\cdots ,x_{2kl})\rightarrow 0.
$$
Define on $C_0$ the divisor
$$
D'=D\cap C_0-E,
$$
where
$$
E=\left\lbrace 2\sum_{i=1}^{2kl}x_i\right\rbrace.
$$
By definition of $r$, the set $F(x_1,\cdots, x_{2kl})$ is the subspace of $H^0(C_0,\mathcal{O}_{C_0}(D\cap C_0))$ of sections vanishing at least at order $2$ at $x_1,\cdots ,x_{2kl}$. Consider the exact sheaf sequence
$$
0\rightarrow\mathcal{O}_{C_0}(D')\rightarrow\mathcal{O}_{C_0}(D\cap C_0)\rightarrow\mathcal{O}_{E}(D\cap C_0\mid_E)\rightarrow 0,
$$
and consider the associated long exact sequence
$$
0\rightarrow H^0(C_0,\mathcal{O}_{C_0}(D'))\rightarrow H^0(C_0,\mathcal{O}_{C_0}(D\cap C_0))\overset{r'}{\rightarrow}H^0(E,\mathcal{O}(D\cap C_0\mid_{E}))\rightarrow \cdots
$$
Thus, one sees that $F(x_1,\cdots, x_{2kl})=\ker r'$ is identified with $H^0(C_0,\mathcal{O}_{C_0}(D'))$.
\\Let us compute $h^0(C_0,D')$. The divisor $D'$ is of degree $4kl$. Moreover, one has
$$
\begin{array}{c@{=\mbox{ }}c}
\deg(K_{C_0}-D')\mbox{ } & -\deg(D')+\deg(K_{C_0})\\
& -\deg(D')-2+2g(C_0)\\
& -4kl-2+2(k-1)(l-1)\\
& -2kl-2k-2l.\\
\end{array}
$$
But $-2kl-2k-2l<0$, so $h^0(C_0,K_{C_0}-D')=0$, and by Riemann-Roch formula, one gets
$$
\begin{array}{c@{=\mbox{ }}c}
h^0(C_0,D')\mbox{ } & \deg(D')+1-g(C_0)\\
 & 4kl+1-(k-1)(l-1)\\
 & 3kl+k+l.
\end{array}
$$
Therefore, one has
$$
\dim(E(x_1,...,x_{2kl}))=3kl+k+l+\dim(G(x_1,...,x_{2kl})).
$$
Now, compute $\dim(G(x_1,...,x_{2kl}).$ Consider the following exact sequence:
$$
0\rightarrow\mathcal{O}_{\PP^1\times\PP^1}(D-C_0^2)\rightarrow
\mathcal{O}_{\PP^1\times\PP^1}(D-C_0)\rightarrow\mathcal{O}_{C_0}((D-C_0)\mid_{C_0})\rightarrow 0.
$$
Passing to the long exact sequence, one obtains:
$$
0\rightarrow H^0(\PP^1\times\PP^1,\mathcal{O}(D-C_0^2))\rightarrow G(x_1,...,x_{2kl})\rightarrow r(G(x_1,...,x_{2kl}))\rightarrow 0.
$$
With a similar computation as before, one sees that 
$$
\begin{array}{c@{=\mbox{ }}c}
\dim(r(G(x_1,...,x_{2kl}))\mbox{ } & 4kl+1-(k-1)(l-1)\\
 & 3kl+k+l.\\
\end{array}
$$
On the other hand, $h^0(\PP^1\times\PP^1,D-C_0^2)=(2k+1)(2l+1)-1$.
So finally,
$$
\begin{array}{c@{=\mbox{ }}c}
\dim(E(x_1,...,x_{2kl}))\mbox{ } & 3kl+k+l+3kl+k+l+4kl+2k+2l\\
 & 10kl+4k+4l,
\end{array}
$$
and $ \textrm{codim}(\mathcal{A})=6kl$. Denote by $\tilde{\mathcal{A}}$ the linear system of curves obtained as the proper transform of the linear system $\mathcal{A}$ and denote by $\mathcal{B}$ the linear system of surfaces of tridegree $(2k,2l,2)$ in $(\CP^{1})^{3}$. By restriction to the surface $\lbrace P=0\rbrace$, one obtains a map from $\mathcal{B}$ to $\tilde{\mathcal{A}}$. By definition, the kernel of this map correspond to the linear system of surfaces of tridegree $(k,l,1)$. Then the dimension of the image of the restriction map is
$$
3(2k+1)(2l+1)-2(k+1)(l+1)-1=10kl+4(k+l),
$$ 
and the restriction map from $\mathcal{B}$ to $\tilde{\mathcal{A}}$ is surjective. 
\end{proof}
In summary, we obtain the following method of construction of real algebraic surfaces of degree $(2k,2l,2)$ in $(\CP^1)^3$.
\begin{enumerate}
\item Consider two nonsingular real algebraic curves $\lbrace P_0=0 \rbrace$ and $\lbrace P_1=0 \rbrace$ of bidegree $(k,l)$ in $(\CP^1)^2$ intersecting transversely.
\item Consider an irreducible real algebraic curve $\lbrace R=0\rbrace $ of bidegree $(4k,4l)$ in $(\CP^1)^2$ with $2kl$ double points, one double point at each intersection point of $\lbrace P_0=0 \rbrace$ and $\lbrace P_1=0 \rbrace$. 
\item Consider the polynomial $P(u_i,v_i)=v_3P_1(u_i,v_i)+u_3P_0(u_i,v_i)$. The real algebraic surface $X$ in $(\CP^1)^3$ defined by the polynomial $P$ is the blow-up of $(\CP^1)^2$ at the $2kl$ points of intersection of $\lbrace P_0=0 \rbrace$ and $\lbrace P_1=0 \rbrace$. Consider the strict transform $\tilde{\mathcal{C}}$ of $\lbrace R=0 \rbrace$ under this blow-up. By Proposition \ref{generalposition}, there exists a polynomial $Q$ of tridegree $(2k,2l,2)$ such that 
$$
\tilde{\mathcal{C}}=\lbrace P=0 \rbrace\cap \lbrace Q=0 \rbrace.
$$
Denote by $X_-\subset \R X$ the subset which projects to $\lbrace R\leq 0\rbrace$.
\item Consider the surface 
$$
Z=\lbrace P^2+\varepsilon Q=0\rbrace,
$$ for $\varepsilon>0$ small enough. Then, the surface $Z$ is a nonsingular real algebraic surface of tridegree $(2k,2l,2)$ in $(\CP^1)^3$ and its real part $\R Z$ is homeomorphic to the disjoint union of two copies of $X_-$ attached to each other by the identity map of $\R \tilde{\mathcal{C}}$.
\end{enumerate}
\section{A family of real algebraic surfaces with asymptotically maximal number of handles in $(\CP^1)^3$}
\label{constructionasympt}
Let $k\geq 2$ and $l\geq 2$. To prove Theorem \ref{theorem1}, we construct a real algebraic curve of bidegree $(4k,4l)$ in $(\CP^1)^2$ with $2kl$ double points which are the intersection points of two real algebraic curves of bidegree $(k,l)$ in $(\CP^1)^2$. The main difficulty is that these $2kl$ double points have to be in a special position. In fact, there is no curve of bidegree $(k,l)$ passing through $2kl$ points in $(\CP^1)^2$ in general position. In ~\cite{Brugalle2006}, Brugall\'e constructed a family of reducible curves 
$
\lbrace C^1_n=0 \rbrace \cup \lbrace C^2_n=0 \rbrace
$ in the $n$th Hirzebruch surface $\Sigma_n$, where $ \lbrace C^1_n=0 \rbrace$ has Newton polytope $Conv((0,0),(n,0),(0,1))$ and $\lbrace C^2_n=0 \rbrace$ has Newton polytope $Conv((0,0),(n,0),(0,2),(n,1))$. The chart of $\lbrace C^1_n=0 \rbrace \cup \lbrace C^2_n=0 \rbrace$ is depicted in Figure \ref{courbebrugalle}.
\begin{figure}[h!]
\centerline{
\includegraphics[width=9cm,height=5cm]{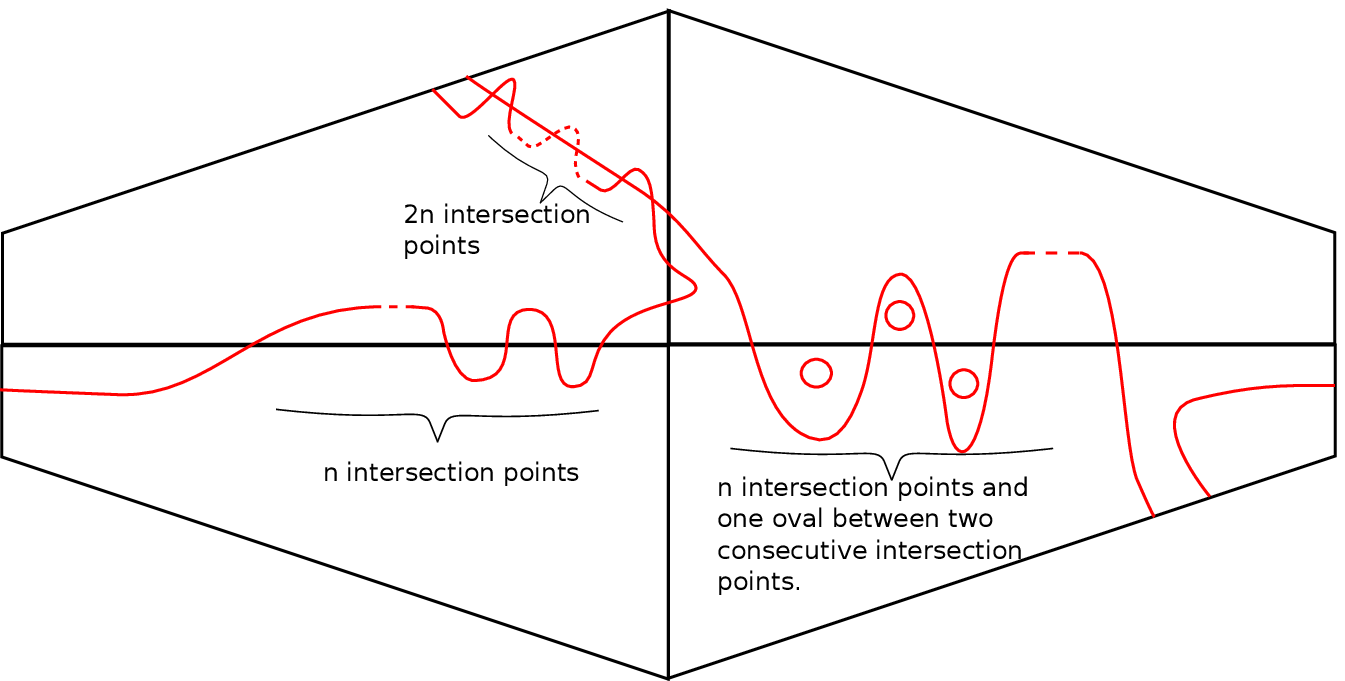}}
\setlength\abovecaptionskip{0cm}
\caption{The chart of $\lbrace C^1_n=0 \rbrace \cup \lbrace C^2_n=0 \rbrace $}
\label{courbebrugalle}
\end{figure}
Using Brusotti theorem (see Theorem \ref{Brusottitheorem} or \cite{Risler}), perturb the curve 
$$ 
\lbrace C^1_{2k-1}=0 \rbrace \cup \lbrace C^2_{2k-1}=0\rbrace
$$ keeping $k$ double points, as depicted in Figure \ref{courbeperturbee}. 
\begin{figure}[h!]
\centerline{
\includegraphics[width=9cm,height=5cm]{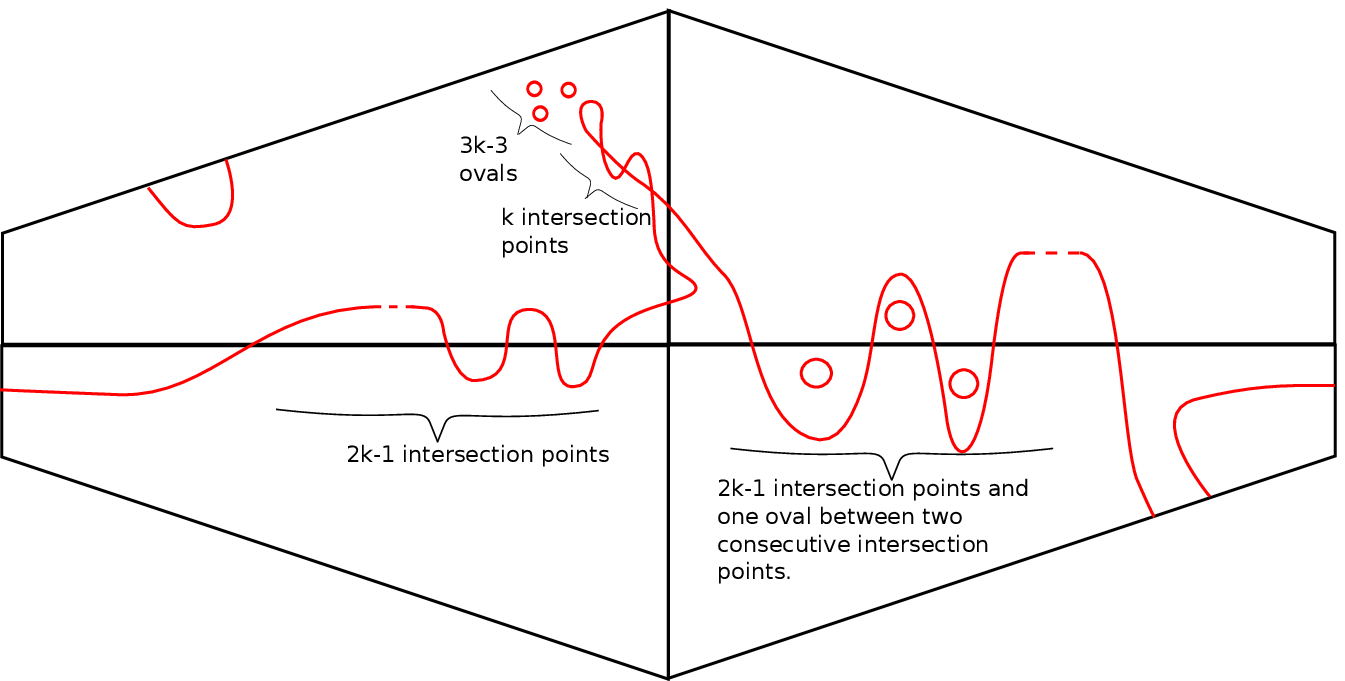}}
\setlength\abovecaptionskip{0cm}
\caption{The chart of $\lbrace C_{2k-1}=0 \rbrace$}
\label{courbeperturbee}
\end{figure}
Denote the resulting curve by \linebreak $\lbrace C_{2k-1}=0 \rbrace$, and by $c_{i,j}$  the coefficient of the monomial $(i,j)$ in the polynomial $C_{2k-1}$.
Since the edge $[(0,3),(4k-2,1)]$ of the Newton polytope of $C_{2k-1}$ is of length $2$, one can assume, up to a linear change of coordinates, that $c_{0,3}=c_{4k-2,1}$. Denote by $\Gamma$ the rectangle $Conv((0,0),(0,4),(4k,0),(4k,4))$ and consider the charts of the polynomials $xC_{2k-1}(x,y)$ and $x^{4k-1}y^4C_{2k-1}(\frac{1}{x},\frac{1}{y})$. Complete the rectangle $\Gamma$ with other charts of polynomials, as depicted in Figure \ref{courbebase}. By Shustin's patchworking theorem for curves with double points, there exists a polynomial $P$ of Newton polytope $\Gamma$ whose chart is depicted in Figure \ref{courbebase}. 
\begin{figure}[h!]
\centerline{
\includegraphics[width=9cm,height=5cm]{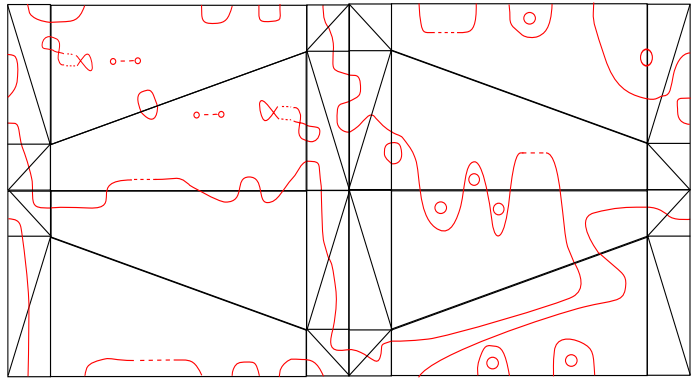}}
\setlength\abovecaptionskip{0cm}
\caption{The chart of $P$}
\label{courbebase}
\end{figure}
Denote by $(x_1,y_1),\cdots,(x_{2k},y_{2k})$ the coordinates of the $2k$ double points of $ \lbrace P=0 \rbrace $. These $2k$ double points are on the intersection of two algebraic curves of bidegree $(k,1)$ in $(\CP^1)^2$, but it could happen that these two curves are reducible. It turns out that to prove Theorem \ref{theoremprincipal}, it is important to have the $2k$ double points of $\lbrace P=0\rbrace$ on the intersection of two irreducible curves of bidegree $(k,1)$ in $(\CP^1)^2$, which is the case if the $2k$ double points of $\lbrace P=0 \rbrace $ are in general position.
\begin{lemma}
\label{generalpositionlemma}
One can perturb the polynomial $P$ so that the double points of $\lbrace P=0 \rbrace $ are in general position.
\end{lemma}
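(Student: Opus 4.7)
The plan is to perturb $P$ within the equisingular stratum provided by Shustin's S-transversality, and to show that the map sending a nearby equisingular polynomial to its $2k$-tuple of nodes is a submersion at $P$, so that its image meets the Zariski-open locus of configurations in general position.

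More precisely, let $\Sigma$ denote the projective space of polynomials of Newton polytope $\Gamma$, and let $M \subset \Sigma$ be the germ at $P$ of the stratum of polynomials having exactly $2k$ nodes near $(x_1,y_1),\ldots,(x_{2k},y_{2k})$. Since $P$ was produced by the patchworking theorem under the S-transversality hypothesis of Theorem \ref{patchworksingular}, the stratum $M$ is smooth of codimension $2k$ at $P$, and $T_P M$ is canonically identified with the space of polynomials of bidegree $(4k,4k)$ vanishing at each node, modulo $\C\cdot P$. Consider the node-position map
$$
\Phi : M \longrightarrow \big((\CP^1)^2\big)^{2k}, \qquad P' \longmapsto \big(\mathrm{Sing}(P')\big)_{\text{ordered}}.
$$
Its differential sends $R \in T_P M$ to the $2k$-tuple of velocity vectors of the nodes, each of which depends linearly on the $1$-jet of $R$ at the corresponding node via an isomorphism provided by the nondegenerate Hessian of $P$. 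Therefore $d_P \Phi$ is surjective precisely when the restriction map
$$
H^0\big((\CP^1)^2, \mathcal{O}(4k,4k) \otimes \mathcal{I}_Z\big) \longrightarrow \bigoplus_{i=1}^{2k} \mathcal{I}_{(x_i,y_i)}/\mathcal{I}_{(x_i,y_i)}^2
$$
is surjective, where $Z = \{(x_i,y_i)\}_{i=1}^{2k}$; the obstruction is $H^1\big((\CP^1)^2, \mathcal{O}(4k,4k)\otimes \mathcal{I}_Z^2\big)$. I would show this cohomology group vanishes for $k\geq 2$ by a direct Castelnuovo--Mumford argument on $(\CP^1)^2$: the zero-dimensional scheme $\mathrm{Spec}(\mathcal{O}/\mathcal{I}_Z^2)$ has length $6k$, well below $h^0(\mathcal{O}(4k,4k))=(4k+1)^2$, and one checks that no small subset of the nodes produced by the patchwork lies on an auxiliary curve of low bidegree that could obstruct the vanishing.

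Granting this, the image of $\Phi$ contains a Euclidean open neighborhood $U$ of $((x_1,y_1),\ldots,(x_{2k},y_{2k}))$ in $((\CP^1)^2)^{2k}$. The subset of $((\CP^1)^2)^{2k}$ for which every $(k,1)$-curve through the $2k$ points is reducible (equivalently: no two irreducible $(k,1)$-curves together contain all $2k$ points) is a proper algebraic subvariety, since generically $2k$ points impose independent conditions on the $(2k+1)$-dimensional linear system of $(k,1)$-curves, and the reducible members of the resulting pencil $\PP^1$ form a proper Zariski-closed subset. Pulling back the complement of the bad locus under $\Phi$ yields the desired perturbation $P'$ of $P$.

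The main obstacle is the surjectivity of $d_P\Phi$ — equivalently, the independence of the $2k$ infinitesimal node displacements under equisingular deformations of $P$. Shustin's S-transversality only guarantees that $M$ has the expected dimension, and the cohomological vanishing above is the technical input needed to upgrade that to the independent movability of each node in any prescribed direction.
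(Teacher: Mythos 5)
Your overall framework---compute the tangent map to the node-position map, show it is surjective, deduce that the image of the equisingular stratum in the configuration space $((\CP^1)^2)^{2k}$ contains an open set, then discard a proper closed ``bad'' locus---is the same in spirit as the paper's incidence-variety dimension count: the paper bounds $\dim\pi_2(\mathcal{I})$ from below by $\dim\mathcal{I}-\dim(\pi_2^{-1}(x)\cap\mathcal{I})=(N-2k)-(N-6k)=4k$, which is precisely the rank statement for your $d_P\Phi$. The two proofs ultimately reduce to the same cohomological fact and differ only in packaging (also note the bidegree is $(4k,4)$, not $(4k,4k)$: the polynomial $P$ has Newton polytope $\Gamma=\mathrm{Conv}((0,0),(4k,0),(0,4),(4k,4))$).

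The genuine gap is your treatment of the vanishing $H^1\bigl((\CP^1)^2,\mathcal{O}(4k,4)\otimes\mathcal{I}_Z^2\bigr)=0$, which you defer to ``a direct Castelnuovo--Mumford argument'' together with a check that ``no small subset of the nodes produced by the patchwork lies on an auxiliary curve of low bidegree.'' That check is exactly where the difficulty lives: the $2k$ nodes coming out of the patchworking of $\lbrace C^1_{2k-1}=0\rbrace\cup\lbrace C^2_{2k-1}=0\rbrace$ are emphatically \emph{not} in general position (they cluster near intersections of two low-bidegree components), so a length-versus-$h^0$ bound or an appeal to generic independence of conditions does not apply, and you do not carry out the promised verification. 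The paper avoids this entirely by citing Lemma~\ref{theoremRR}, which establishes the expected codimension $3k$ for the system of curves singular at the $k$ nodes of an \emph{irreducible} curve $C\in|\mathcal{L}_\Delta|$ under the purely combinatorial hypothesis $2k+m<b(\Delta)$: the proof there passes to the normalization $\tilde C$ and uses Riemann--Roch on $\tilde C$ rather than on $(\CP^1)^2$, so the (possibly special) position of the nodes on the ambient surface is irrelevant---it only matters that they are nodes of a single irreducible member of the system. For $\Gamma$ of bidegree $(4k,4)$ one has $b(\Gamma)=8k+8>4k=2\cdot(2k)$, so Lemma~\ref{theoremRR} applies and gives the $H^1$ vanishing you need with no genericity check. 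You should close the gap by invoking Lemma~\ref{theoremRR} (which the paper has already set up in Section~\ref{Transversality}) instead of an unfinished Castelnuovo--Mumford computation on the surface.
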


\begin{proof} 
 Denote by $\mathcal{E}$ the set of polynomials of bidegree $(4k,4)$, and denote by $N$ its dimension. Denote by $S$ the subset of $\mathcal{E}$ consisting of polynomials defining curves which have double points in a neighborhood of $\lbrace(x_1,y_1)$,$\cdots$,$(x_{2k},y_{2k})\rbrace$. By the Brusotti theorem (see Theorem \ref{Brusottitheorem} in Section \ref{Transversality}), there exists a small neighborhood $U$ of $P$ in $\mathcal{E}$ such that $S\cap U$ is a transverse intersection of $2k$ hypersurfaces in $\mathcal{E}$. Then, $\dim(S\cap U)=N-2k$. Define the incidence variety $\mathcal{I}$ associated to $S\cap U$ by 
$$ 
\mathcal{I}=\left\lbrace (Q,z_1,...,z_{2k})\in \left( S\cap U \right) \times ((\C^*)^2)^{2k} \mid z_i \mbox{ is a double point of } Q\right\rbrace. 
$$
One has
$\pi_1(\mathcal{I})= S\cap U$, where 
$$
\pi_1:\mathcal{E}\times((\C^*)^2)^{2k}\rightarrow\mathcal{E}
$$
denotes the first projection.
Denote by $\pi_2$ the second projection:
$$
\pi_2:\mathcal{E}\times((\C^*)^2)^{2k}\rightarrow ((\C^*)^2)^{2k}.
$$
To prove the lemma, it is enough to show that $\dim(\pi_2(\mathcal{I}))=4k$.
By the Brusotti theorem, $\pi_1$ induces a local homeomorphism from $\mathcal{I}$ to $S\cap U$, so $\dim(\mathcal{I})=\dim(S\cap U)=N-2k$. By Lemma \ref{theoremRR}, one has 
$$
\dim(\pi_2^{-1}(x)\cap \mathcal{I})=N-6k,
$$
for any $x\in \pi_2(\mathcal{I})$.
So $\dim(\pi_2(\mathcal{I}))=N-2k-(N-6k)=4k$. 
\end{proof}
Hence we can assume that the double points of $P$ are the intersection points of two irreducible curves of bidegree $(k,1)$ in $(\CP^1)^2$.
Denote by $L(x,y)=0$ and $M(x,y)=0$ the equations of two distinct irreducible curves of bidegree $(k,1)$ in $(\CP^1)^2$ passing through $(x_1,y_1),\cdots,(x_{2k},y_{2k})$, the double points of $\lbrace P=0 \rbrace$.
\\For $1\leq h\leq l$, put
\begin{itemize}
\item
$
L_h(x,y)=y^{h}L(x,\frac{1}{y}),
$
\item
$
M_h(x,y)=y^{h}M(x,\frac{1}{y})$, 
\end{itemize}
if $h$ is odd, and 
\begin{itemize}
\item
$
L_h(x,y)=y^{h-1}L(x,y),
$
\item
$
M_h(x,y)=y^{h-1}M(x,y)$, 
\end{itemize}
if $h$ is even.

Choose an irreducible polynomial $P_1$ of bidegree $(4k,2)$ such that the curve $\lbrace P_1=0\rbrace$ has a double point at all the $(x_i,\frac{1}{y_i})$. Let $P_1^0$ be the polynomial of bidegree $(4k,1)$ such that the restriction of $P_1^0$ to the edge $[(0,0),(4k,0)]$ coincide with the restriction of $P_1$ to the edge $[(0,2),(4k,2)]$ and the restriction of $P_1^0$ to the edge $[(0,1),(4k,1)]$ coincide with the restriction of $P$ to the edge $[(0,0),(4k,0)]$. Put $P_1^1=y^2 P_1^0$ (see Figure \ref{empilement_1}).
\\For $2\leq h\leq l$, put
\begin{itemize}
\item
$
P_h(x,y)=y^{4h-1}P(x,\frac{1}{y})$, if $h$ is odd, and 

\item
$
P_h(x,y)=y^{4h-5}P(x,y)$, if $h$ is even.
\end{itemize}

Let $P_{l+1}^0$ be a polynomial of bidegree $(4k,1)$ such that the restriction of $P_{l+1}^0$ to the edge $[(0,0),(4k,0)]$ is equal to the restriction of $P_{l}$ to the edge $[(0,4l-1),(4k,4l-1)]$. Put $P_{l+1}=y^{4l-1}P_l^0$ ( see Figure \ref{empilement_1}).
\begin{figure}[h!]
\centerline{
\includegraphics[width=12cm,height=7cm]{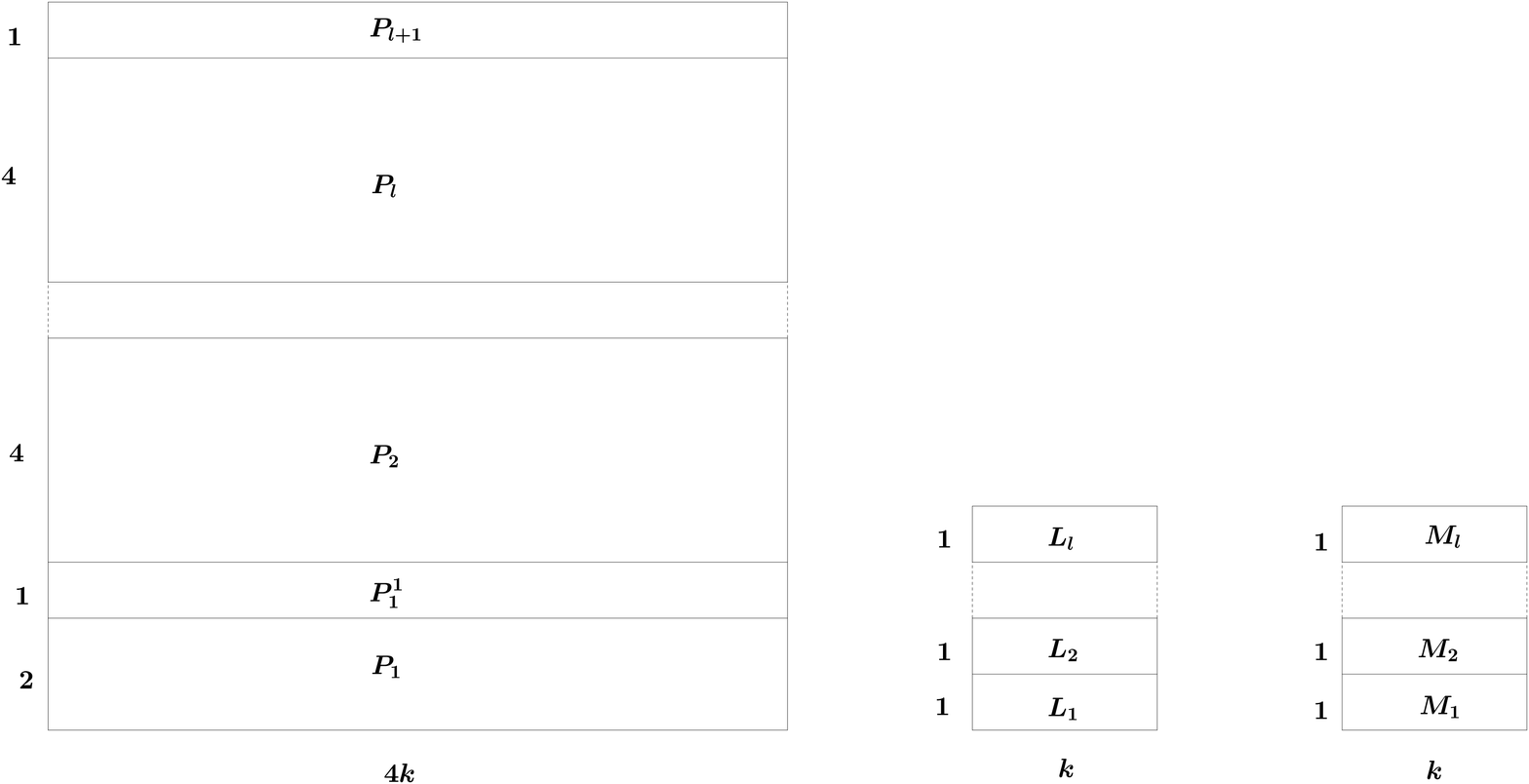}}
\setlength\abovecaptionskip{0cm}
\caption{The Newton polytopes of polynomials $P_j$, $L_j$ and $M_j$.}
\label{empilement_1}
\end{figure}
\begin{theorem}
\label{theoremprincipal}
There exist three real algebraic curves $\lbrace \tilde{P}=0 \rbrace$, $ \lbrace \tilde{L}=0 \rbrace$ and $\lbrace \tilde{M}=0 \rbrace$ in $(\CP^{1})^{2}$ such that:
\begin{itemize} 
\item The polynomial $\tilde{P}$ is of bidegree $(4k,4l)$, the polynomials $\tilde{L}$ and $\tilde{M}$ are of bidegree $(k,l)$.
\item
The chart of $\tilde{P}$ is homeomorphic to the gluing of the charts of the polynomials $P_1$, $P_1^1$ and $P_j$, for $2\leq j\leq l+1$. 
\item
The curve $\lbrace \tilde{P}=0\rbrace$ has $2kl$ double points,  one double point at each intersection point of $\lbrace \tilde{L}=0 \rbrace$ and $ \lbrace \tilde{M}=0 \rbrace$.
\end{itemize}
\end{theorem}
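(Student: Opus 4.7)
The plan is to apply Shustin's patchworking theorem (Theorem \ref{patchworksingular}) to the family $P_1,P_1^1,P_2,\ldots,P_{l+1}$ to glue them into a polynomial $\tilde P$ of bidegree $(4k,4l)$, while in parallel, with a common convex certificate $\nu$ and a common Viro parameter $t$, applying the standard nonsingular patchworking to $\{L_h\}_{h=1}^l$ and $\{M_h\}_{h=1}^l$ to obtain the curves $\tilde L$ and $\tilde M$ of bidegree $(k,l)$. This coupled construction produces three one-parameter families $\tilde P_t$, $\tilde L_t$, $\tilde M_t$ whose combinatorial node and intersection data are forced to match in the limit $t\to 0$.

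First, I would exhibit the convex subdivision of $\Gamma=[0,4k]\times[0,4l]$ into horizontal strips of heights $2,1,4,4,\ldots,4,1$, carrying the Newton polytopes of $P_1,P_1^1,P_2,\ldots,P_{l+1}$ (their heights summing to $2+1+4(l-1)+1=4l$), certified by a piecewise-linear function affine on each strip with strictly increasing slopes in the $y$-direction. Truncation compatibility along the horizontal edges holds by construction: the transitions between $P_1$, $P_1^1$ and $P_2$ are built into the definition of $P_1^0$; between $P_h$ and $P_{h+1}$ for $2\le h\le l-1$ the flip $y\mapsto 1/y$ combined with the normalization $c_{0,3}=c_{4k-2,1}$ of $C_{2k-1}$ equates the shared truncations; and the top transition to $P_{l+1}$ is enforced by the definition of $P_{l+1}^0$. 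An analogous, simpler subdivision of $[0,k]\times[0,l]$ into $l$ unit horizontal strips carries the families $L_h$ and $M_h$, with matching truncations on interior edges.

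Second, I would verify S-transversality for each singular piece. Every singular point of every $P_j$ is an ordinary node, so the topological invariant $b(w)$ from Shustin's criterion vanishes identically. The example following Theorem \ref{patchworksingular} then applies: it suffices to orient the adjacency graph so that each $\partial\Delta_{j,+}$ is a proper face-union, which is automatic for a chain of horizontal strips. Irreducibility of each piece is preserved under the flip $y\mapsto 1/y$ and multiplication by a monomial, starting from the irreducibility of $P$ guaranteed by Lemma \ref{generalpositionlemma}. Theorem \ref{patchworksingular} then produces a PNS polynomial $\tilde P$ of Newton polytope $\Gamma$ with exactly $2kl$ nodes and the prescribed chart, while the nonsingular Viro patchworking produces irreducible real curves $\tilde L$ and $\tilde M$ of bidegree $(k,l)$.

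Third, and this is the main obstacle, I would show that the $2kl$ nodes of $\tilde P$ coincide with $\tilde L\cap\tilde M$. Shustin's theorem on its own only localizes each node in a small neighborhood of its combinatorial position, so a priori the two collections of $2kl$ points need not coincide. To remedy this I would carry out the patchworking of $\tilde P$ not in the full space $\mathcal{P}(\Gamma)$ but in the linear subsystem $\mathcal{A}_t\subset\mathcal{P}(\Gamma)$ of curves of bidegree $(4k,4l)$ imposing a node at each point of $\tilde L_t\cap\tilde M_t$. By Proposition \ref{generalposition} and the transversality result announced in Section \ref{Transversality}, this subsystem has the expected codimension $6kl$ and meets the relevant nodal strata transversally, so the S-transversality verification from the second step persists inside $\mathcal{A}_t$. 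Since each piece $P_j$ already has its nodes on $L_h\cap M_h$ by the parallel flip construction of the $L_h$ and $M_h$, applying Theorem \ref{patchworksingular} within $\mathcal{A}_t$ (or equivalently an implicit function theorem argument on the coupled system $(\tilde P_t,\tilde L_t,\tilde M_t)$) yields the required $\tilde P$ whose $2kl$ nodes lie exactly on $\tilde L\cap\tilde M$.
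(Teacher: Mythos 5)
Your proposal identifies exactly the right difficulty and the right resolution, and in substance it is the same as the paper's proof. The setup (horizontal-strip subdivision of $[0,4k]\times[0,4l]$ carrying the $P_j$'s, parallel unit-strip subdivision of $[0,k]\times[0,l]$ carrying the $L_h$'s and $M_h$'s, with truncations matching by construction) is correct, and the main point — that a bare application of Theorem~\ref{patchworksingular} only localizes the nodes of $\tilde P$ and does not pin them to $\tilde L\cap\tilde M$ — is the real content. One caution about your first suggested fix: the moving subsystem $\mathcal{A}_t$ of $(4k,4l)$-curves nodal at $\tilde L_t\cap\tilde M_t$ degenerates as $t\to 0$ along with everything else, and it is not meaningful to "run Shustin's patchworking inside $\mathcal{A}_t$" while holding $\tilde L_t,\tilde M_t$ fixed; one must vary all three polynomials simultaneously. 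Your fallback phrasing — an implicit function theorem argument on the coupled family $(\tilde P_t,\tilde L_t,\tilde M_t)$ — is exactly what the paper does: it defines, for each strip $h$, a germ $S_h$ in the product space $\mathcal{P}(\Delta_h)\times\mathcal{P}(\Lambda_h)\times\mathcal{P}(\Lambda_h)$ consisting of triples $(P,L,M)$ with $P$ nodal near the nodes of $P_h$ and $L,M$ vanishing there, proves via Theorem~\ref{theoremfinal} that $S_h$ is a transverse intersection of smooth hypersurfaces with the truncation constraints $\partial\Delta_+^h,\partial\Lambda_+^h$, and then shows (Lemma~\ref{lemmaShustin}) that the assembled Jacobian is block-triangular and nondegenerate at $t=0$ so the implicit function theorem produces the coefficient functions $A_{i,j},B_{i,j},C_{i,j}$. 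One further small imprecision: Lemma~\ref{generalpositionlemma} gives general position of the nodes of $P$, not irreducibility of $P$; irreducibility of the nodal pieces (needed for Lemma~\ref{theoremRR}) has to be arranged separately, though this is not a serious obstacle.
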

\textit{Proof of Theorem \ref{theorem1}.}
By the construction presented in Section \ref{doublecovering}, the three curves $\lbrace \tilde{P}=0 \rbrace$, $\lbrace \tilde{L}=0 \rbrace$ and $\lbrace \tilde{M}=0 \rbrace$ produce a real algebraic surface $X_{k,l}$ of tridegree $(2k,2l,2)$ in $(\CP^1)^3$. 
\begin{figure}[h!]
\centerline{
\includegraphics[width=10cm,height=10cm]{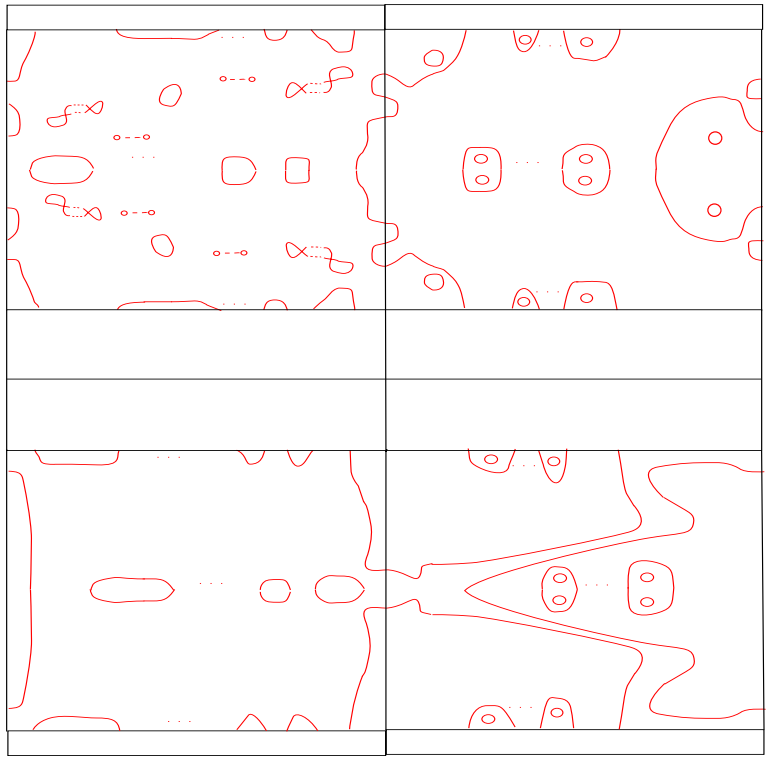}}
\setlength\abovecaptionskip{0cm}
\caption{A part of the chart of $\tilde{P}$ in the case $l=3$.}
\label{courbebaserecolle}
\end{figure}
The sign of $\tilde{P}$ is the same in any empty oval of $\lbrace \tilde{P}=0 \rbrace $ coming from the gluing of the charts of the polynomials $P_h$, for $2\leq h\leq l$. Assume that this sign is positive. Denote by $Y_+$ (resp., $Y_-$) the subset of $\R Y$ which projects to $\lbrace \tilde{P}\geq 0 \rbrace$ (resp., $\lbrace \tilde{P}\leq 0 \rbrace$), where $Y$ is $(\CP^1)^2$ blown up at the $2kl$ double points of $\lbrace \tilde{P}=0 \rbrace$. Then, $\R X_{k,l}$ is homeomorphic to the disjoint union of two copies of $Y_-$ attached by the identity map of the strict transform of $\lbrace \tilde{P}=0 \rbrace$. We depicted a part of the chart of $\tilde{P}$ in the case $l=3$ in Figure \ref{courbebaserecolle}. By counting all the ovals of $\lbrace \tilde{P}=0 \rbrace$ containing two empty ovals, one sees that 
$$
b_0(Y_-)\geq (2k-2)(l-2).
$$
Let us estimate from below the Euler characteristic of $Y_+$. By counting empty ovals of $\lbrace \tilde{P}=0 \rbrace$, one sees that $Y_+$ contains at least $(6k-4)(l-1)+3(2k-2)(l-2)$ components homeomorphic to a disc. One sees also that $Y_+$ contains at least $l-2$ components homeomorphic to a disc with $k-1$ holes, and another component with at least $(k-1)(l-2)$ holes. Since the real part of the real algebraic curve curve $\lbrace \tilde{P}=0\rbrace$ contains at most $(4k-1)(4l-1)+1-2kl$ connected components, one sees that there exist $c_0,d_0,e_0\in\Z$ such that 
$$
\chi(Y_+)\geq 10kl+c_0 \cdot 2k+d_0 \cdot 2l+e_0.
$$
Thus, one has
$$
b_0(\R X_{k,l})=b_0(Y_-)\geq 2kl-4k-2l,
$$
and
$$
\begin{array}{ccc}
\chi(\R X_{k,l}) & = & 2\chi(Y_-)\\
& = & 2(\chi(\R Y)-\chi(Y_+))\\
& \leq & 2(-2kl-10kl+c_0\cdot 2k+d_0\cdot 2l+e_0)\\
& \leq & -24kl+2c_0\cdot 2k+2d_0\cdot 2l+2e_0.
\end{array}
$$
Therefore, one obtains
$$
\begin{array}{ccc}
b_1(\R X_{k,l}) & = & 2b_0(\R X_{k,l})-\chi(\R X_{k,l})\\
& \geq & 28kl-c \cdot 2k-d \cdot 2l+e,
\end{array}
$$
where $c,d,e\in\Z$, which proves Theorem \ref{theorem1}. \qed
\\\textit{Proof of Theorem \ref{theoremprincipal}.}
\\The construction follows the same lines as the proof of the main theorem in \cite{Shustin}. We recall the main steps referring at \cite{Shustin} for the proofs of auxiliary statements. Denote by $\Delta$ the rectangle $Conv\left((0,0),(4k,0),(0,4l),(4k,4l)\right)$, denote by $\Delta_1$ the Newton polytope of $P_1$, denote by $\Delta_1^1$ the Newton polytope of $P_1^1$ and denote by $\Delta_h$ the Newton polytope of $P_h$, for $2\leq h\leq l+1$. Denote by $\Lambda$ the rectangle $Conv\left((0,0),(k,0),(0,l),(k,l)\right)$ and denote by $\Lambda_h$ the Newton polytope of $L_h$, for $1\leq h\leq l$. Denote by $a_{i,j}$, for all $(i,j)\in\Delta$, the collection of real numbers satisfying
\begin{eqnarray*}
P_1^1=\sum_{i,j\in\Delta_1^1}a_{i,j}x^iy^j,\\
P_h=\sum_{i,j\in\Delta_h}a_{i,j}x^iy^j,
\end{eqnarray*}
for $1\leq h\leq l+1$. Denote by $b_{i,j}$, for all $(i,j)\in\Gamma$ and by $c_{i,j}$, for all $(i,j)\in\Gamma$, the collections of real numbers satisfying
\begin{eqnarray*}
L_h=\sum_{i,j\in\Lambda_h}b_{i,j}x^iy^j,\\
M_h=\sum_{i,j\in\Lambda_h}c_{i,j}x^iy^j,
\end{eqnarray*}
for $2\leq h\leq l+1$.
We look for the desired polynomials in a one-parametric family of polynomials.
\begin{eqnarray}
\label{1}
P_t=\sum_{i,j\in\Delta}A_{i,j}(t)x^iy^jt^{\nu_P(i,j)},\\\label{2}
L_t=\sum_{i,j\in\Lambda}B_{i,j}(t)x^iy^jt^{\nu_L(i,j)},\\
M_t=\sum_{i,j\in\Lambda}C_{i,j}(t)x^iy^jt^{\nu_M(i,j)},\label{3}
\end{eqnarray}
where 
\begin{itemize}
\item $\vert A_{i,j}(t)-a_{i,j}\vert \leq Kt$, 
\item $\vert B_{i,j}(t)-b_{i,j}\vert \leq Kt$, and
\item $\vert C_{i,j}(t)-c_{i,j}\vert \leq Kt$, 
\end{itemize}
for some positive constant $K$. The piecewise-linear functions $\nu_P$, $\nu_L$ and $\nu_M$ are defined as follows. The function $\nu_L$ is the piecewise-linear function independent of $i$ certifying the convexity of the decomposition $\Lambda=\cup_h \Lambda_h$, satisfying $\nu_L(0,1)=0$, of slope $-1$ on $\Lambda_1$ and of slope $h$ on $\Lambda_{h+1}$, for $1\leq h\leq l-1$. Put $\nu_M=\nu_L$. The function $\nu_P$ is the piecewise-linear function independent of $i$ certifying the convexity of the decomposition $\Delta=\cup_h \Delta_h$, satisfying $\nu_P(0,2)=0$, of slope $-1$ on $\Delta_1$, of slope $0$ on $\Delta_1^1$ and of slope $h$ on $\Delta_{h+1}$, for $1\leq h\leq l$. 
\\Denote by $\mu^h_L(j)=a_h+h j$ the affine function equal to $\nu_L$ on $\Lambda_h$, $h=1, \cdots ,l$. Denote by $\mu^h_P(j)=a'_h+h j$  the affine function equal to $\nu_P$ on $\Delta_h$, $h=1,\cdots ,l$. The substitution of $\nu_P^h=\nu_P-\mu_P^h$ for $\nu_P$ in (\ref{1}), the substitution of $\nu_L^h=\nu_L-\mu_L^h$ for $\nu_L$ in (\ref{2}) and the substitution of $\nu_M^h=\nu_M-\mu_M^h$ for $\nu_M$ in (\ref{3}) give the families
$$
P_{h,t}=P_h+\sum_{(i,j)\notin\Delta_h}A_{i,j}(t)x^iy^jt^{\nu_P^h(j)}+\sum_{(i,j)\in\Delta_h}(A_{i,j}(t)-a_{i,j})x^iy^j,
$$

$$
L_{h,t}=L_h+\sum_{(i,j)\notin\Lambda_h}B_{i,j}(t)x^iy^jt^{\nu_L^h(j)}+\sum_{(i,j)\in\Lambda_h}(B_{i,j}(t)-b_{i,j})x^iy^j,
$$

$$
M_{h,t}=M_h+\sum_{(i,j)\notin\Lambda_h}C_{i,j}(t)x^iy^jt^{\nu_M^h(j)}+\sum_{(i,j)\in\Lambda_h}(C_{i,j}(t)-c_{i,j})x^iy^j,
$$ 
for all $h=1,\cdots ,l$.
These substitutions are the composition of the coordinate change
$$
T_h(x,y)=(x,yt^h)
$$
with the multiplication of the polynomial by some positive number.
$$
P_{h,t}=t^{-a'_h}P_t(T_h^{-1}(x,y)),
$$ 
$$
L_{h,t}=t^{-a_h}L_t(T_h^{-1}(x,y)),
$$
$$
M_{h,t}=t^{-a_h}M_t(T_h^{-1}(x,y)).
$$
In particular, the point $(x,y)$ is a singular point of $P_t$ in $(\C^*)^2$ if and only if the point $T_h(x,y)$ is a singular point of $P_{h,t}$. 
\\Fix a compact $Q\subset(\C^*)^2$, whose interior contains all singular points of $P_h$ in $(\C^*)^2$, for $h=1,...,l$. Denote by $z_{h_p}$, for $p\in I_h$, the singular points of $P_{h,t}$ in $Q$.
\begin{lemma}$($see \cite{Shustin} $)$
There exists $t_0>0$ such that for any $t\in (0,t_0)$, the points $T_h(z_{h_p})$ for $p\in I_h$ and $h=1,...,l$, are the only singular points of $P_t$ in $(\C^*)^2$.
\end{lemma}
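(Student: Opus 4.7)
The plan is to follow Shustin's localization argument for Viro-type patchworking. Since each $T_h$ is a bijection of $(\C^*)^2$ and, by the defining relation $P_{h,t}=t^{-a'_h}P_t\circ T_h^{-1}$, the singular points of $P_{h,t}$ in $(\C^*)^2$ are exactly the images under $T_h$ of the singular points of $P_t$, the statement reduces to showing that every singular point of $P_t$ in $(\C^*)^2$ lies in $\bigcup_h T_h^{-1}(Q)$.

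First I would observe that $P_{h,t}\to P_h$ in coefficients as $t\to 0$, because the extra terms carry strictly positive powers of $t$ (the function $\nu_P^h=\nu_P-\mu_P^h$ is strictly positive on $\Delta\setminus\Delta_h$ by convexity of the subdivision). Since $P_h$ is PNS and all its singular points lie in the interior of $Q$, the implicit function theorem applied to the system $\{P_{h,t}=0,\,\partial_x P_{h,t}=0,\,\partial_y P_{h,t}=0\}$ gives, for $t$ small, a bijection between the singular points $z_{h_p}$ of $P_{h,t}$ in $Q$ and the singular points of $P_h$; in particular, no singular point of $P_{h,t}$ lies on $\partial Q$. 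This handles the level regions $T_h^{-1}(Q)$.

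The main step is to rule out singular points of $P_t$ in the complement of $\bigcup_h T_h^{-1}(Q)$. I would cover this complement by transition annuli between consecutive levels and by two peripheral annuli (for very small and very large $|y|$). On a transition annulus between levels $h$ and $h+1$, introduce an intermediate change of coordinates $T^s(x,y)=(x,yt^s)$ for $s\in[h,h+1]$, chosen so that a fixed compact piece of the annulus maps into a prescribed compact of $(\C^*)^2$. After multiplying by the appropriate power of $t$, the pullback of $P_t$ by $(T^s)^{-1}$ has leading terms, as $t\to 0$, supported exactly on the common edge $\sigma=\Delta_h\cap\Delta_{h+1}$, while the other terms tend to zero uniformly on compacts. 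Since $P_h$ is PNS, the truncation $P_h^\sigma=P_{h+1}^\sigma$ defines a nonsingular curve in $(\C^*)^2$, so its gradient is bounded away from zero on any fixed compact; by continuity, $P_t$ has no singular points in the annulus for $t$ small. The peripheral annuli are treated analogously using PNS of $P_1$ and $P_{l+1}$ on their outer edges. The main obstacle is the uniformity of the gradient estimate as $s$ varies continuously in $[h,h+1]$ and finitely many compacts exhaust the annulus, so that a single $t_0>0$ works for all of them simultaneously; this uniform control is precisely what Shustin establishes in \cite{Shustin}, and I would import his estimates verbatim to finish the proof.
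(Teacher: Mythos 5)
The paper does not prove this lemma; it sets up the coordinate changes $T_h$ and the singular-point correspondence and then simply cites Shustin. Your sketch correctly reconstructs the intended localization argument: the $T_h$-correspondence reduces the claim to showing that every singular point of $P_t$ lies in some $T_h^{-1}(Q)$; near each $\Delta_h$ the coefficient-wise convergence $P_{h,t}\to P_h$ (using that $\nu_P-\mu_P^h$ is strictly positive off $\Delta_h$) together with the implicit function theorem, applied to the nondegenerate double points of $P_h$, pins down the singularities to the interior of $Q$; and in the transition regions the intermediate rescalings $T^s$ isolate the common-edge truncation, which is nonsingular by PNS, so a continuity estimate excludes new singularities. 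Two caveats. First, your annular decomposition only explicitly covers the $\vert y\vert$-direction; singular points could a priori escape with $\vert x\vert\to 0$ or $\vert x\vert\to\infty$, and excluding that requires the parallel argument using PNS on the left and right (vertical) edges of each $\Delta_h$ with rescalings in $x$ -- the reasoning is the same, but it must be included. Second, you explicitly defer the uniform gradient estimate over the continuous parameter $s\in[h,h+1]$ and over the compact exhaustion to Shustin; this uniformity is precisely the technically delicate core of the proof, so your proposal does not independently establish the lemma -- though this deferral mirrors exactly what the paper itself does, so your outline is consistent with its approach.
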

We define $A_{i,j}$, $B_{i,j}$ and $C_{i,j}$ as smooth functions of $t$ such that $A_{i,j}(0)=a_{i,j}$, \linebreak $B_{i,j}(0)=b_{i,j}$ and $C_{i,j}(0)=c_{i,j}$ and such that for any $h\in\lbrace 1,...,l\rbrace$, the polynomial $P_{h,t}$ has $2k$ double points in $Q$ which lie on the intersection of the curves $\lbrace L_{h,t}=0\rbrace$ and $\lbrace M_{h,t}=0\rbrace$. Following the notations of Section \ref{Transversality}, consider in $\mathcal{P}(\Delta_h)\times\mathcal{P}(\Lambda_h)\times\mathcal{P}(\Lambda_h)$ the germ $S_h$ at $(P_h,L_h,M_h)$ of the variety of polynomials $(P,L,M)$ such that $P$ has its singular points in a neighboorhood of the double points of $P_h$ and such that $L$ and $M$ vanish at these singular points. Define $\partial\Delta_+^h$ and $\partial\Lambda_+^h$ as follows:
\begin{itemize}
\item $\partial\Delta_+^1=\emptyset$ and $\partial\Lambda_+^1=\emptyset$,
\item for $2\leq h\leq l$, $\partial\Delta_+^h=[(0,4h-5)-(4k,4h-5)]$ and $\partial\Lambda_+^h=[(0,h-1)-(k,h-1)]$.
\end{itemize}
One has $\#(\partial\Delta_+^h\cap\Z^2)-1=4k$ and $\#(\partial\Lambda_+^h\cap\Z^2)-1=k$, and it follows from Theorem \ref{theoremfinal} applied to $(P_h,L_h,M_h)$ that $S_h$ is the transversal intersection of smooth hypersurfaces
\begin{eqnarray}
\label{4}
\left\lbrace\varphi_r^{(h)}=0\right\rbrace, \:\:\:\:  r=1,...,d_h,
\end{eqnarray}
$$
d_h=\mbox{codim} S_h,
$$
where
$$
\begin{array}{cccc}
\varphi_r^{(h)} : & \mathcal{P}(\Delta)\times\mathcal{P}(\Lambda)\times\mathcal{P}(\Lambda) & \longrightarrow & \C \\
& \left(A'_{i,j},B'_{u,v},C'_{u,v}\right) & \longmapsto & \varphi_r^{(h)}\left(A'_{i,j},B'_{u,v},C'_{u,v}\right).
\end{array}
$$
Moreover, there is a subset 
$$
\Xi_h\subset(\Z^2)^3
\cap(\Delta_h\setminus\partial\Delta_+^h
\times(\Lambda_h\setminus\partial\Lambda_+^h)^2),
$$
such that $\mbox{card}(\Xi_h)=d_h$, and 
$$
\det\left(\frac{\partial\varphi_r^{(h)}}{\partial A'_{i,j},\partial B'_{u,v},\partial C'_{u',v'}}\right)_{
\begin{tiny}
\begin{array}{l}
r=1,...,d_h ; \\
\lbrace(i,j),(u,v),(u',v')\rbrace\in\Xi_h
\end{array}
\end{tiny}
}\neq 0
$$
at the point
$$
(A'_{i,j},B'_{u,v},C'_{u',v'})= \left\{
\begin{array}{ll}
(a_{i,j},b_{u,v},c_{u',v'}) &, \lbrace(i,j),(u,v),(u',v')\rbrace\in\Xi_h \\
0 &, \mbox{ otherwise }
\end{array}  \right.
$$
To find out $A_{i,j}(t), B_{u,v}(t)$ and $C_{u',v'}(t)$ we plug
$$
A'_{i,j}=A_{i,j}t^{\nu_P^h(j)}, \,\, B'_{u,v}=B_{u,v}t^{\nu_L^h(v)}  \,\,\mbox{and}\,\,
C'_{u',v'}=C_{u',v'}t^{\nu_M^h(v')}.
$$
in (\ref{4}) for any $h=1,...,l$.
\begin{lemma}
\label{lemmaShustin}
One has
$$
\det\left(\frac{\partial\varphi_r^{(h)}}{\partial A_{i,j},\partial B_{u,v},\partial C_{u',v'}}\right)_{
\begin{tiny}
\begin{array}{l}
r=1,...,d_h ; \\
\lbrace(i,j),(u,v),(u',v')\rbrace\in\bigcup_{h}\Xi_h
\end{array}
\end{tiny}
}\neq 0
$$
\end{lemma}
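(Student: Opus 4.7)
The plan is to reduce the verification to an analysis of the Jacobian at $t = 0$ via the chain rule, exploiting a block-triangular structure that emerges from the piecewise-linear weights $\nu_P^h, \nu_L^h, \nu_M^h$.

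First, I would apply the chain rule to the substitutions $A'_{i,j} = A_{i,j} t^{\nu_P^h(j)}$, $B'_{u,v} = B_{u,v} t^{\nu_L^h(v)}$, $C'_{u',v'} = C_{u',v'} t^{\nu_M^h(v')}$, yielding
$$
\frac{\partial \varphi_r^{(h)}}{\partial A_{i,j}} = t^{\nu_P^h(j)} \cdot \frac{\partial \varphi_r^{(h)}}{\partial A'_{i,j}}, \qquad \frac{\partial \varphi_r^{(h)}}{\partial B_{u,v}} = t^{\nu_L^h(v)} \cdot \frac{\partial \varphi_r^{(h)}}{\partial B'_{u,v}},
$$
and similarly for $C_{u',v'}$. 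Thus every entry of the target Jacobian is a bounded factor multiplied by a monomial in $t$ whose exponent depends on the row block $h$ and on the lattice coordinate of the column.

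Next, I would order the rows by the block index $h$ and the columns by $h_0(\xi)$, the unique $h$ with $\xi \in \Xi_h$ (well defined because the $\Xi_h$ sit in pairwise-disjoint subsets of $(\Z^2)^3$). The decisive combinatorial input is the inclusion $\Xi_h \subset (\Delta_h \setminus \partial\Delta_+^h) \times (\Lambda_h \setminus \partial\Lambda_+^h)^2$ together with the fact that $\partial\Delta_+^h$ and $\partial\Lambda_+^h$ are the \emph{bottom} edges of $\Delta_h$ and $\Lambda_h$ (the facets shared with $\Delta_{h-1}$, $\Lambda_{h-1}$, which is forced by orienting the arcs of the adjacency graph from $\Delta_h$ to $\Delta_{h+1}$). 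For any column index $\xi \in \Xi_{h_0}$, integrality then pushes the second coordinate of each of the three lattice points in $\xi$ strictly above the entire $j$- or $v$-range of every $\Delta_h$, $\Lambda_h$ with $h < h_0$. Since $\nu_P^h, \nu_L^h, \nu_M^h$ are convex piecewise-linear functions vanishing exactly on $\Delta_h, \Lambda_h, \Lambda_h$, this forces $\nu_P^h(j), \nu_L^h(v), \nu_M^h(v') > 0$, so every entry in the strictly upper-block part of the matrix carries a positive power of $t$ and vanishes at $t = 0$.

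At $t = 0$ the substituted coefficients collapse, block by block, onto the triples $(P_h, L_h, M_h)$, and the diagonal block of size $d_h \times d_h$ indexed by $\Xi_h$ becomes precisely the nonzero Jacobian minor provided by the $S$-transversality of the triad $(\Delta_h, \partial\Delta_+^h, (P_h, L_h, M_h))$. Hence at $t = 0$ the matrix is block lower-triangular with nonzero square diagonal blocks, and its determinant is the nonzero product of the block determinants. Continuity of the determinant in $t$ then extends nonvanishing to all sufficiently small positive $t$. The main obstacle is the combinatorial step showing that the second coordinate of any $\xi \in \Xi_{h_0}$ strictly exceeds the $j$- or $v$-range of $\Delta_h, \Lambda_h$ for every $h < h_0$; this is exactly where the orientation on the adjacency graph and the exclusion of the bottom facets in the definitions of $\Xi_h$ are used, and once it is in place the triangular structure and the conclusion follow at once.
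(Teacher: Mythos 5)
Your proof is correct and takes essentially the same approach as the paper's: both arguments observe that the sets $\Xi_h$ are disjoint and that the Jacobian becomes block-triangular at $t=0$ with the nondegenerate $S$-transversality minors on the diagonal. The paper states this in two sentences without justification, while you supply the (correct) combinatorial verification that for $\xi\in\Xi_{h_0}$ with $h_0>h$ the weights $\nu_P^h,\nu_L^h,\nu_M^h$ are strictly positive on the relevant lattice points, which is exactly the content the paper leaves implicit.
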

By means of the implicit function theorem, we derive the existence of the desired functions $A_{i,j}(t), B_{i,j}(t)$ and $C_{i,j}(t)$.
\qed

\textit{Proof of Lemma \ref{lemmaShustin}.}
The sets $\Xi_h$ are disjoint by construction and the matrix 
$$
\left(\frac{\partial\varphi_r^{(h)}}{\partial A_{i,j},\partial B_{u,v},\partial C_{u',v'}}\right)_{
\begin{tiny}
\begin{array}{l}
r=1,...,d_h ; \\
\lbrace(i,j),(u,v),(u',v')\rbrace\in\bigcup_{h}\Xi_h
\end{array}
\end{tiny}
}
$$
takes a block-triangular form as $t=0$ with the nondegenerated blocks
$$
\left(\frac{\partial\varphi_r^{(h)}}{\partial A'_{i,j},\partial B'_{u,v},\partial C'_{u',v'}}\right)_{
\begin{tiny}
\begin{array}{l}
r=1,...,d_h ; \\
\lbrace(i,j),(u,v),(u',v')\rbrace\in\Xi_h
\end{array}
\end{tiny}
},
$$
$h=1,...,l$, on the diagonal.
\qed

\section{Counterexample to Viro's conjecture in tridegree $(4,4,2)$}
\label{construction2}
To prove Theorem \ref{theorembis}, we present a construction of a curve of bidegree $(8,8)$ with $4$ double points lying on the intersection of two curves of bidegree $(2,2)$. Consider the curve $\lbrace C^1_3=0\rbrace \cup\lbrace C^2_3=0\rbrace $ (see Section \ref{constructionasympt}) with Newton polytope $Conv\left(\left(0,0\right),\left(6,0\right),\left(0,3\right),\left(6,1\right)\right)$. Perturb the curve $\lbrace C^1_3=0\rbrace \cup\lbrace C^2_3=0\rbrace$ keeping $4$ double points, as depicted in Figure \ref{courbe4_1}. Complete the rectangle $Conv\left(\left(0,0\right),\left(8,0\right),\left(0,3\right),\left(8,3\right)\right)$ with other charts of polynomials, as depicted in Figure \ref{courbe4_1}. By Shustin's patchworking theorem for curves with double points, there exists a polynomial $P$ of bidegree $(8,8)$ whose chart is depicted in Figure \ref{courbe4_1}.
\begin{figure}[h!]
\centerline{
\includegraphics[width=9cm,height=4cm]{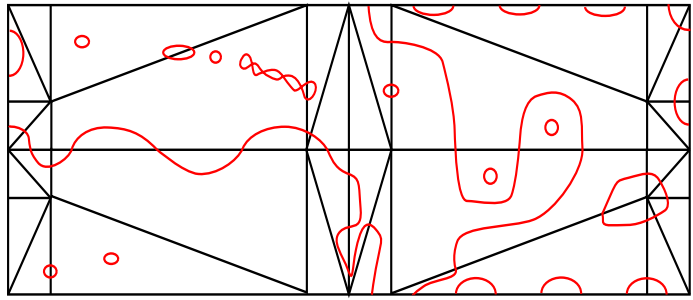}}
\setlength\abovecaptionskip{0cm}
\caption{The chart of $P$}
\label{courbe4_1}
\end{figure}
As in Lemma \ref{generalpositionlemma}, one can assume that the four double points of the curve $\lbrace P=0 \rbrace$ are the intersection points of two distinct irreducible nonsingular curves of bidegree $(2,2)$. Denote by $L(x,y)$ and $M(x,y)$ the polynomials defining the two curves of bidegree $(2,2)$ passing through the four double points of $\lbrace P=0 \rbrace$. 
 Put
\begin{itemize}
\item $P_1(x,y)=y^3P(x,\frac{1}{y})$,
\item $P_2(x,y)=y^3P(x,y)$,
\item $L_1(x,y)=yL(x,\frac{1}{y})$,
\item $L_2(x,y)=yL(x,y)$,
\item $M_1(x,y)=yM(x,\frac{1}{y})$,
\item $M_2(x,y)=yM(x,y)$.
\end{itemize}
Consider a Harnack curve $\left\lbrace P_3^0(x,y)=0\right\rbrace$ of bidegree $(8,2)$ in $(\RP^1)^2$ (see \cite{Mik00} for the definition of a Harnack curve). The chart of $P_3^0$ is depicted in Figure \ref{harnack}. Since $\left\lbrace P_3^0(x,y)=0\right\rbrace$ is a Harnack curve, one can assume that the restriction of $P_3^0$ to the edge $\left[\left(0,0\right),\left(8,0\right)\right]$ is equal to the restriction of $P$ to the edge $\left[\left(0,3\right),\left(8,3\right)\right]$ (see for example \cite{KenOkoun}). Put $P_3=y^6P_3^0$.
\begin{figure}[h!]
\centerline{
\includegraphics[width=9cm,height=4cm]{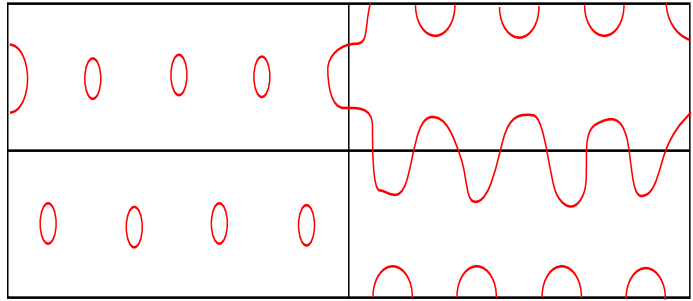}}
\setlength\abovecaptionskip{0cm}
\caption{The chart of $P_3^0$}
\label{harnack}
\end{figure}
\begin{theorem}
\label{theorem88}
There exist three real algebraic curves $\lbrace \tilde{P}=0\rbrace$, $\lbrace \tilde{L}=0 \rbrace$ and $\lbrace \tilde{M}=0\rbrace $ in $(\CP^1)^2$ such that:
\begin{itemize}
\item The polynomial $\tilde{P}$ is of bidegree $(8,8)$.
\item The polynomials $\tilde{L}$ and $\tilde{M}$ are of bidegree $(2,2)$.
\item The chart of $\tilde{P}$ is the result of the gluing of the charts of $P_1$, $P_2$ and $P_3$.
\item The $8$ double points of $\left(\tilde{P}=0\right)$ are on the intersection of the two curves $\left(\tilde{L}=0\right)$ and $\left(\tilde{M}=0\right)$.
\end{itemize}
\end{theorem}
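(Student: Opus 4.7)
The plan is to run a Shustin-style singular patchworking argument parallel to the proof of Theorem \ref{theoremprincipal}, but with the simpler combinatorial data that the statement prescribes. Write $\Delta=\mathrm{Conv}((0,0),(8,0),(0,8),(8,8))$ for the target Newton rectangle and subdivide it horizontally into the Newton polytopes $\Delta_1,\Delta_2,\Delta_3$ of $P_1,P_2,P_3$; similarly write $\Lambda=\mathrm{Conv}((0,0),(2,0),(0,2),(2,2))$ and subdivide it into $\Lambda_1,\Lambda_2$ carrying $(L_1,M_1)$ and $(L_2,M_2)$. I would first pick convex piecewise-linear functions $\nu_P:\Delta\to\R$ and $\nu_L=\nu_M:\Lambda\to\R$, independent of the first variable, certifying these subdivisions, and then look for $\tilde P,\tilde L,\tilde M$ inside the Viro families
\begin{align*}
P_t &= \sum_{(i,j)\in\Delta} A_{i,j}(t)\,x^i y^j\,t^{\nu_P(i,j)},\\
L_t &= \sum_{(i,j)\in\Lambda} B_{i,j}(t)\,x^i y^j\,t^{\nu_L(i,j)},\\
M_t &= \sum_{(i,j)\in\Lambda} C_{i,j}(t)\,x^i y^j\,t^{\nu_M(i,j)},
\end{align*}
with coefficient functions that at $t=0$ restrict to the prescribed polynomials on each piece.

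For each $h\in\{1,2\}$, the coordinate rescaling $T_h(x,y)=(x,yt^h)$ turns $P_t,L_t,M_t$ into perturbations $P_{h,t},L_{h,t},M_{h,t}$ of $P_h,L_h,M_h$, and the localization lemma recalled from \cite{Shustin} guarantees that for $t>0$ small the only singular points of $P_t$ in $(\C^*)^2$ are the images under $T_h$ of the singular points of $P_{h,t}$ for $h=1,2$; the strip $\Delta_3$ contributes no singularities since $P_3$ is a nonsingular Harnack curve. I would then introduce, for $h=1,2$, the germ $S_h$ at $(P_h,L_h,M_h)$ in $\mathcal{P}(\Delta_h)\times\mathcal{P}(\Lambda_h)^2$ of those triples $(P,L,M)$ for which $P$ has four nodes near the four nodes of $P_h$ and $L,M$ vanish at these nodes, and invoke the transversality theorem \ref{theoremfinal} of Section \ref{Transversality} to represent $S_h$ as the transverse vanishing locus of smooth equations $\varphi^{(h)}_r=0$ together with a distinguished index set $\Xi_h$ contained in $(\Delta_h\setminus\partial\Delta^h_+)\times(\Lambda_h\setminus\partial\Lambda^h_+)^2$, with $\partial\Delta^1_+=\partial\Lambda^1_+=\emptyset$ and $\partial\Delta^2_+,\partial\Lambda^2_+$ the edges of $\Delta_2,\Lambda_2$ glued to $\Delta_1,\Lambda_1$, such that the associated Jacobian minor is non-zero at the base point.

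Substituting $A'_{i,j}=A_{i,j}t^{\nu^h_P(j)}$ and the analogous expressions for $B',C'$ and repeating the Jacobian computation of Lemma \ref{lemmaShustin}, the combined system at $t=0$ has a block-triangular Jacobian with the two non-degenerate blocks from $h=1,2$ on the diagonal; the strip $\Delta_3$ imposes no singularity conditions, it enters the picture only through the identification of the top edge of $\Delta_2$ with the bottom edge of $\Delta_3$, which is automatic from the Viro ansatz and the matching choice of $P_3^0$. The implicit function theorem then produces smooth coefficient functions $A_{i,j}(t),B_{i,j}(t),C_{i,j}(t)$ on some interval $(0,t_0)$ solving the system, and setting $\tilde P=P_{t_0}$, $\tilde L=L_{t_0}$, $\tilde M=M_{t_0}$ yields polynomials of the stated bidegrees. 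The chart assertion follows from the description of the chart of the Viro polynomial in the remark following Theorem \ref{patchworksingular}.

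The main obstacle I expect is the S-transversality input at $h=1,2$: one must certify that the four nodes of $P_h$ lie in sufficiently general position on $\{L_h=0\}\cap\{M_h=0\}$ for Theorem \ref{theoremfinal} to supply the codimension and the index set $\Xi_h$. This forces an adaptation of Lemma \ref{generalpositionlemma} to the current $(8,8)$/$(2,2)$ setting, ensuring that after a small perturbation of $P$ the four nodes are the transverse intersection of two irreducible curves of bidegree $(2,2)$. Once that genericity is secured, the length count of edges of $\partial\Delta_h,\partial\Lambda_h$ not contained in $\partial\Delta^h_+,\partial\Lambda^h_+$, combined with $b(w)=0$ for nodes, feeds the S-transversality criterion of Section \ref{singulargluing} and the rest of the argument proceeds verbatim as above.
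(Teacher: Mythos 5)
Your proposal is correct and follows essentially the same route as the paper, whose own proof of this theorem just defers to the argument for Theorem~\ref{theoremprincipal}: the same Viro families with convex weights $\nu_P,\nu_L=\nu_M$, localization of singular points via $T_h(x,y)=(x,yt^h)$ for $h=1,2$, S-transversality of the germs $S_h$ via Theorem~\ref{theoremfinal}, a block-triangular Jacobian at $t=0$, the implicit function theorem, and the observation that the $\Delta_3$ strip carries the nonsingular Harnack piece and imposes no singularity conditions. One caveat, inherited from an apparent typo in Section~\ref{construction2}: for the polytope data to be consistent the auxiliary curves $L,M$ through the four nodes of $P$ must be of bidegree $(2,1)$ rather than $(2,2)$, so that $L_1(x,y)=yL(x,1/y)$ and $L_2(x,y)=yL(x,y)$ are genuine polynomials whose Newton polytopes $\Lambda_1=[0,2]\times[0,1]$ and $\Lambda_2=[0,2]\times[1,2]$ tile your target $\Lambda=[0,2]\times[0,2]$ and yield $\tilde L$ of bidegree $(2,2)$ with the eight glued nodes on $\{\tilde L=0\}\cap\{\tilde M=0\}$.
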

\begin{proof}
The proof follows the same lines as the proof of Theorem \ref{theoremprincipal}.
\end{proof}
The chart of $\tilde{P}$ is depicted in Figure \ref{courbe4_2}.
\begin{figure}[h!]
\centerline{
\includegraphics[width=9cm,height=8cm]{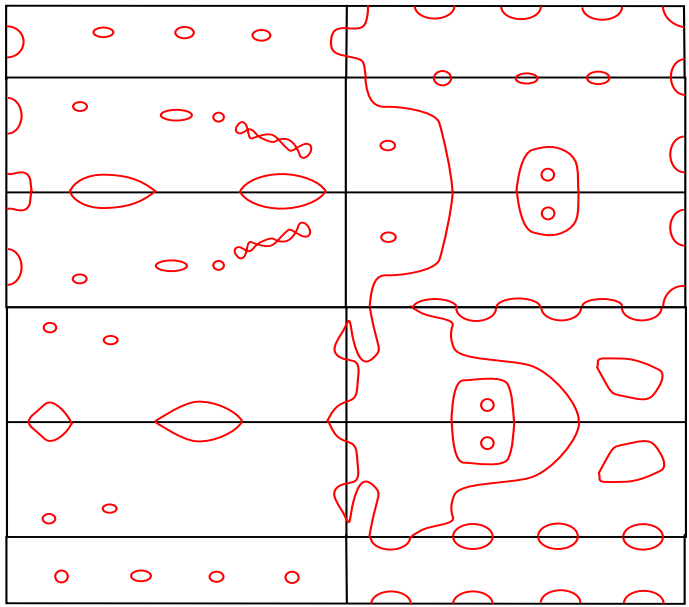}}
\setlength\abovecaptionskip{0cm}
\caption{The chart of $\tilde{P}$}
\label{courbe4_2}
\end{figure}
Denote by $\tilde{C}$ the strict transform of $\lbrace \tilde{P}=0\rbrace$ under the blow up of $(\CP^1)^2$ at the $8$ double points of $\lbrace \tilde{P}=0\rbrace $. Assume that the sign of $\tilde{P}$ in any empty oval is positive and consider the real algebraic surface $Z$ of tridegree $(4,4,2)$ in $(\CP^1)^3$ defined by
$$
Z=\lbrace F^2+\varepsilon G=0\rbrace,
$$
where $F=v_3\tilde{L}+u_3\tilde{M}$, the number $\varepsilon$ is some small positive parameter and $G$ is a polynomial of tridegree $(4,4,2)$ such that
$$
\tilde{C}=\lbrace F=0 \rbrace \cap \lbrace G=0 \rbrace.
$$
It follows from Section \ref{doublecovering} that $\R Z$ is homeomorphic to the disjoint union of two copies of $Y_-$ attached to each other by the identity map of $\tilde{C}$, where $Y_-$ is the part of $(\RP^1)^2$ blown up at the $8$ double points of $\lbrace \tilde{P}=0 \rbrace$ projecting to $\lbrace \tilde{P}\leq 0 \rbrace$. One can see from Figure \ref{courbe4_2} that $b_0(\R Z)=6$  and that $\R Z$ contains three spheres and two components of genus two. Moreover, one has
$$
\begin{array}{ccc}
\chi(\R Z) & = & 2(-8-(34-4))\\
& = & -76.
\end{array}
$$
Then $b_1(\R Z)=88$ and
$$
\R Z\simeq 2S_2\sqcup 3S\sqcup S_{40},
$$
which proves Theorem \ref{theorembis}.

\section{Transversality theorems}
\label{Transversality}
In this section, we prove a transversality theorem needed in the proof of Theorem \ref{theoremprincipal} and Theorem \ref{theorem88}.
\subsection{Notations}
\begin{itemize}
\item For a polytope $\Delta$, denote by $\vert\mathcal{L}_\Delta\vert$ the linear system on $Tor(\Delta)$ of curves of Newton polytope $\Delta$.
\item Let $F\in\mathcal{P}(\Delta)$, and let $\partial\Delta_+\subset\partial\Delta$ be a subset of the set of edges of $\Delta$. Introduce the space of polynomials
$$
\mathcal{P}(\Delta,\partial\Delta_+,F)=\lbrace G\in\mathcal{P}(\Delta) \mbox{ }\vert\mbox{ } G^{\sigma}=F^{\sigma}, \sigma\in\partial\Delta_+\rbrace.
$$
\item For a polytope $\Delta$, denote by $A(\Delta)$ the euclidean area of $\Delta$, by $b(\Delta)$ the number of integral points of the boundary of $\Delta$, by $i(\Delta)$ the number of integral points in the interior of $\Delta$ and put $\vert\Delta\vert=i(\Delta)+b(\Delta)$ the number of integral points in $\Delta$. 
\end{itemize}
\subsection{Brusotti theorem}
Let $\Delta$ be a polytope. Denote by $t=(x,y)$ the coordinates on $(\C^*)^2$. In $\mathcal{P}(\Delta)\times(\C^*)^2$, consider the algebraic variety defined by
$$
B=\left\lbrace \begin{array}{c@{=\mbox{}}c}
P(t) & 0,\\
\partial P/\partial x(t) & 0,\\
\partial P/\partial y(t) & 0.\\
\end{array}\right. 
$$
Let $(P_0,t_0)\in B$, and assume that $t_0=(x_0,y_0)\in(\C^*)^2$ is an ordinary quadratic point of $\lbrace P_0=0\rbrace$.
\begin{lemma} $($see, for example, \cite{Risler} $)$
\\There exists a neighborhood $U$ of $(P_0,t_0)$ in $\mathcal{P}(\Delta)\times(\C^*)^2$ such that:
\begin{itemize}
\item $B\cap U$ is smooth of codimension $1$.
\item If $\pi:\mathcal{P}(\Delta)\times(\C^*)^2\rightarrow\mathcal{P}(\Delta)$ denotes the first projection, then $B'=\pi(B\cap U)$ is smooth of codimension $1$.
\end{itemize}
\end{lemma}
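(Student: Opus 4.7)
The plan is to apply the implicit function theorem twice: once in all $(P,t)$-variables to realize $B$ as a smooth complex submanifold, and once more in the $t$-variables alone, using that $t_0$ is a nondegenerate critical point of $P_0$, to exhibit $B\cap U$ as a graph over $B'$.

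\emph{Smoothness of $B$.} Consider the holomorphic map
$$\Phi\colon\mathcal{P}(\Delta)\times(\C^*)^2\longrightarrow\C^3,\qquad \Phi(P,t)=\bigl(P(t),\partial_xP(t),\partial_yP(t)\bigr),$$
so that $B=\Phi^{-1}(0)$. Differentiating in the coefficient $a_{i,j}$ of the monomial $x^iy^j$ of $P$ produces the column vector $(x_0^iy_0^j,\,ix_0^{i-1}y_0^j,\,jx_0^iy_0^{j-1})^T$. For three lattice points $(i_s,j_s)\in\Delta\cap\Z^2$, $s=1,2,3$, the determinant of the associated $3\times 3$ block of $d\Phi_{(P_0,t_0)}$ equals a nonzero monomial in $x_0,y_0$ times
$$\det\begin{pmatrix}1&1&1\\ i_1&i_2&i_3\\ j_1&j_2&j_3\end{pmatrix},$$
which is nonzero as soon as the three points are affinely independent. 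Such a triple exists because $\Delta$ is two-dimensional, so $d\Phi_{(P_0,t_0)}$ is surjective and $B\cap U$ is smooth at $(P_0,t_0)$ by the implicit function theorem.

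\emph{Smoothness of $B'$ and the projection.} The ordinary quadratic point hypothesis means exactly that the Hessian
$$\begin{pmatrix}\partial_{xx}P_0(t_0)&\partial_{xy}P_0(t_0)\\ \partial_{xy}P_0(t_0)&\partial_{yy}P_0(t_0)\end{pmatrix}$$
is nondegenerate. Applying the implicit function theorem to the two equations $\partial_xP(t)=\partial_yP(t)=0$ in the variables $t$ alone yields a holomorphic map $P\mapsto t(P)$, defined near $P_0$, such that $t(P)$ is the unique critical point of $P$ near $t_0$. Hence $B\cap U$ is the graph of $P\mapsto t(P)$, the restriction $\pi\vert_{B\cap U}$ is a biholomorphism onto $B'$, and $B'$ is locally cut out by the single equation $\Psi(P):=P(t(P))=0$. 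Differentiating, $d\Psi_{P_0}(\delta P)=\delta P(t_0)+\nabla P_0(t_0)\cdot dt_{P_0}(\delta P)=\delta P(t_0)$, because $\nabla P_0(t_0)=0$. Since $t_0\in(\C^*)^2$, any monomial $x^iy^j$ with $(i,j)\in\Delta$ is nonzero at $t_0$, so $d\Psi_{P_0}$ is surjective and $B'$ is a smooth hypersurface of $\mathcal{P}(\Delta)$.

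\textbf{Main obstacle.} The whole argument is a textbook application of the implicit function theorem, so there is no substantial obstacle. The only specific inputs are the nondegeneracy of the Hessian, which is precisely the content of the ordinary-quadratic-point hypothesis, and the existence of three affinely independent lattice points in $\Delta$, which is automatic from $\dim\Delta=2$.
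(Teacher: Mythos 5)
Your argument is correct, and it is worth noting upfront that the paper does not actually supply a proof of this lemma: it simply cites Risler. What the paper does prove in detail is the analogous statement for the variety $S\subset\mathcal{P}(\Delta)\times(\C^*)^2\times\mathcal{P}(\Delta')\times\mathcal{P}(\Delta'')$ cut out by the five equations $P(t)=\partial_xP(t)=\partial_yP(t)=Q(t)=R(t)=0$, and your two-step scheme (implicit function theorem in all variables for smoothness of the incidence variety, then implicit function theorem in $t$ alone via the nondegenerate Hessian to exhibit it as a graph over the space of polynomials) is exactly the scheme used there. The one cosmetic difference is how surjectivity of the Jacobian is verified: the paper exhibits an invertible square submatrix whose columns include the $(x,y)$-derivatives and hence the Hessian block, whereas you restrict attention to three columns in the $\mathcal{P}(\Delta)$-direction indexed by affinely independent lattice points of $\Delta$, which buys a clean separation of concerns (the Hessian is not needed at all for smoothness of $B$, only for smoothness of the projection $B'$). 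Both are valid. One substantive remark: your argument establishes that $B\cap U$ has codimension $3$, not $1$ as stated in the lemma; the stated codimension $1$ for $B\cap U$ appears to be a misprint, consistent with the analogous lemma where the five-equation variety $S\cap U$ has codimension $5$ and its projection $S'$ has codimension $5-2=3$, so here $B\cap U$ should have codimension $3$ and $B'$ codimension $3-2=1$. You might flag this explicitly rather than leaving the codimension of $B\cap U$ implicit.
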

Assume now that the curve $\lbrace P_0=0\rbrace$ has $N$ non-degenerated double points $t_h$, \linebreak $h=1,...,N$ and no other singular points. Applying the above lemma to each $t_h$, we obtain $N$ non-singular analytic submanifolds of $\mathcal{P}(\Delta)$, say $B'_1,...,B'_N$ passing through $P_0$.
\begin{theorem}$($Brusotti, see, for example, \cite{Risler} $)$
\label{Brusottitheorem}
\\There exists a neighborhood $U'$ of $P_0$ in $\mathcal{P}(\Delta)$ such that the intersection 
$$
B'_1\cap\cdots\cap B'_N\cap U'
$$
is transversal in $\mathcal{P}(\Delta)$.
\end{theorem}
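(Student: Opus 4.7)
The plan is to linearize the transversality question at $P_0$ and reduce it to showing that the $N$ nodes $t_1, \ldots, t_N$ impose $N$ independent linear conditions on $\mathcal{P}(\Delta)$. The preceding lemma already provides, for each $h$, a smooth codimension-one analytic submanifold $B'_h \subset \mathcal{P}(\Delta)$ through $P_0$, so the only content remaining is to verify that the tangent hyperplanes $T_{P_0} B'_h$ intersect in codimension exactly $N$.

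First I would identify $T_{P_0} B'_h$ explicitly by first-order deformation theory. Given $Q \in \mathcal{P}(\Delta)$ and the one-parameter family $P_{\varepsilon} = P_0 + \varepsilon Q$, the non-degeneracy of $t_h$ as a critical point of $P_0$ (i.e., invertibility of the Hessian $H_h$ of $P_0$ at $t_h$) allows the implicit function theorem to be applied to the gradient equations $\partial_x P_{\varepsilon} = \partial_y P_{\varepsilon} = 0$, producing a unique nearby critical point $t(\varepsilon) = t_h - \varepsilon H_h^{-1} \nabla Q(t_h) + O(\varepsilon^2)$. Using $P_0(t_h) = 0$ and $\nabla P_0(t_h) = 0$, a Taylor expansion yields
$$
P_{\varepsilon}\bigl(t(\varepsilon)\bigr) = \varepsilon\, Q(t_h) + O(\varepsilon^2),
$$
so $Q$ is tangent to $B'_h$ at $P_0$ if and only if $Q(t_h) = 0$. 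Hence $T_{P_0} B'_h = \ker \mathrm{ev}_{t_h}$, where $\mathrm{ev}_{t_h} : \mathcal{P}(\Delta) \to \mathbb{C}$ denotes evaluation at $t_h$.

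The transversality of $B'_1 \cap \cdots \cap B'_N$ at $P_0$ is therefore equivalent to the surjectivity of the joint evaluation map
$$
\mathrm{ev} : \mathcal{P}(\Delta) \longrightarrow \mathbb{C}^N, \qquad Q \longmapsto \bigl(Q(t_1), \ldots, Q(t_N)\bigr),
$$
that is, to the $N$ nodes of $\lbrace P_0 = 0 \rbrace$ imposing $N$ independent conditions on $\mathcal{P}(\Delta)$. This last step is the main obstacle, and it is where the global geometry of the nodal curve is used: the $t_h$ are not in arbitrary special position because they are the nodes of a genuine element of $\vert\mathcal{L}_\Delta\vert$. In the classical plane case this independence follows from Noether's $AF + BG$ theorem applied to adjoint curves, and the same kind of argument (adjoint polynomials supported on the interior lattice points of $\Delta$, combined with the arithmetic-genus drop forced by the $N$ nodes) handles the toric case for general $\Delta$. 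Once surjectivity of $\mathrm{ev}$ is established, the smoothness of each $B'_h$ together with the implicit function theorem delivers the required transversal intersection in a sufficiently small neighborhood $U'$ of $P_0$.
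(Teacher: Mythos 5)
Your proposal is correct and follows the same route the paper has in mind: reduce transversality to the first-order (linearized) question and observe that the tangent space $T_{P_0}B'_h$ is exactly the kernel of the evaluation $Q\mapsto Q(t_h)$, so that transversality of $B'_1\cap\cdots\cap B'_N$ at $P_0$ is equivalent to the $N$ nodes imposing independent linear conditions on $\mathcal{P}(\Delta)$. The paper does not spell out the linearization step (it only cites \cite{Risler}), but it explicitly identifies the same key ingredient, namely Lemma \ref{Brusottilemma}, a Riemann--Roch consequence stating that curves in $\vert\mathcal{L}_\Delta\vert$ through the $k$ nodes form a subsystem of codimension exactly $k$. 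Your justification of that independence via adjoint curves and Noether's $AF+BG$ theorem is a classical and essentially equivalent route to the same vanishing statement (non-speciality of the relevant divisor on the normalization); the paper's phrasing in terms of the cohomology sequence and Riemann--Roch, as carried out in the proof of Lemma \ref{theoremRR}, is the version that extends most transparently to arbitrary toric $\Delta$, whereas you leave the toric adaptation of the adjoint argument at the level of a sketch. No gap; the approaches are in substance the same, and the one genuinely nontrivial step is correctly located.
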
 
The key point of the proof of Brusotti theorem is the following corollary of Riemann-Roch theorem.
\begin{lemma} $($see, for example, \cite{Risler} $)$
\label{Brusottilemma}
\\Let $\Delta$ be a polytope and let $C\in\vert\mathcal{L}_\Delta\vert$. Suppose that $C$ has $k$ non-degenerated double points $x_1,...,x_k$ in $(\C^*)^2$ and no other singular points.
Then, the linear subsystem of $\vert\mathcal{L}_\Delta\vert$ consisting of curves passing through $x_1,...x_k$ is of codimension $k$. 
\end{lemma}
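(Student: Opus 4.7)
The plan is to recast the codimension assertion as a cohomological surjectivity and then establish it via Riemann--Roch on the normalization of $C$. Writing $Z=\{x_1,\dots,x_k\}$ as a reduced zero-dimensional subscheme of $Tor(\Delta)$ with ideal sheaf $\mathcal{I}_Z$, the sub-linear system of curves passing through all $x_i$ is the projectivization of $H^0(Tor(\Delta),\mathcal{I}_Z\otimes\mathcal{L}_\Delta)$, and the short exact sequence
$$
0\to \mathcal{I}_Z\otimes\mathcal{L}_\Delta\to \mathcal{L}_\Delta\to \mathcal{O}_Z\otimes\mathcal{L}_\Delta\to 0
$$
identifies its codimension in $|\mathcal{L}_\Delta|$ with the rank of the evaluation map $\mathrm{ev}\colon H^0(\mathcal{L}_\Delta)\to \C^k$. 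This rank is at most $k$, so it suffices to show that $\mathrm{ev}$ is surjective, equivalently that the natural map $H^1(\mathcal{I}_Z\otimes\mathcal{L}_\Delta)\to H^1(\mathcal{L}_\Delta)$ is injective.

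The key geometric input is the normalization $\pi\colon N\to C$, which is smooth because the $x_i$ are ordinary nodes and in which each $x_i$ has two distinct preimages $p_i^+,p_i^-$. Restriction to $C$ followed by pullback to $N$ yields a map $r\colon H^0(\mathcal{L}_\Delta)\to H^0(N,\mathcal{O}_N(D))$ with $D=\pi^*(\mathcal{L}_\Delta|_C)$; for any $s\in H^0(\mathcal{L}_\Delta)$ the two values $r(s)(p_i^+)$ and $r(s)(p_i^-)$ coincide and equal $\mathrm{ev}_{x_i}(s)$. The surjectivity of $\mathrm{ev}$ thus reduces, by a Riemann--Roch computation on $N$ applied to the divisors $D-\sum_{j\neq i}(p_j^++p_j^-)$, to producing, for each $i$, a section of $\mathcal{L}_\Delta$ that vanishes at every $x_j$ with $j\neq i$ but not at $x_i$.

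The main obstacle is carrying out the Riemann--Roch step cleanly, including the verification that the relevant residual linear systems on $N$ are nonempty and base-point-free at $\{p_i^+,p_i^-\}$. The standard way, presented in detail in Risler's book, couples this Riemann--Roch computation with the existence of $C$ itself: on the blow-up $\tilde S\to Tor(\Delta)$ at $Z$, the strict transform $\tilde C$ is smooth, and its adjunction data supplies the effective divisor needed to control the cohomology, converting the global cohomological question on $Tor(\Delta)$ into a degree/genus inequality on $N$ that is satisfied under the hypotheses of the lemma.
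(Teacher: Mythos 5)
The paper itself does not prove Lemma~\ref{Brusottilemma}: it cites Risler and moves on. What the paper does prove, with exactly the strategy you outline, is Lemma~\ref{theoremRR} (restriction to $C$, pullback to the normalization $\tilde C$, then Riemann--Roch on $\tilde C$). Your first two paragraphs set up the same machinery correctly: the codimension equals the rank of the evaluation map, the kernel of the restriction $H^0(\mathcal{L}_\Delta)\to H^0(C,\mathcal{O}_C(C))$ is the constants (since $H^1(Tor(\Delta),\mathcal{O})=0$), and a section vanishing at the node $x_i$ corresponds on $\tilde C$ to vanishing at both preimages $p_i^+,p_i^-$.

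The gap is in the third paragraph: you explicitly defer the Riemann--Roch step to Risler and speak of ``a degree/genus inequality on $N$ that is satisfied under the hypotheses of the lemma.'' This is the whole content of the lemma, and the statement carries no explicit numerical hypothesis, so it is worth pinning down why the inequality holds. Writing $D'=D-\sum_i(p_i^++p_i^-)$ with $\deg D=A(2\Delta)-2A(\Delta)=2A(\Delta)$ and $g(\tilde C)=i(\Delta)-k$, one gets
$$
\deg(K_{\tilde C}-D')=2(i(\Delta)-k)-2-\bigl(2A(\Delta)-2k\bigr)=2i(\Delta)-2-2A(\Delta)=-b(\Delta),
$$
by Pick's formula $A(\Delta)=i(\Delta)+b(\Delta)/2-1$. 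So $\deg(K_{\tilde C}-D')<0$ automatically, $h^1(\tilde C,D')=0$, and Riemann--Roch gives $h^0(\tilde C,D')=\deg D'+1-g(\tilde C)=\vert\Delta\vert-1-2k$; no extra hypothesis is needed, in contrast with Lemma~\ref{theoremRR} which requires $2k+m<b(\Delta)$. Also note a mild imprecision: this computation directly gives the codimension of the subsystem (as in the paper's proof of Lemma~\ref{theoremRR}), rather than ``for each $i$, produce a section vanishing at all $x_j$, $j\neq i$, but not at $x_i$.'' That reformulation, while equivalent, is an unnecessary detour; the straightforward dimension count closes the argument, and it also requires one more remark, namely that the Riemann--Roch count on $\tilde C$ bounds the subsystem on $Tor(\Delta)$ from one side, while the other inequality $\dim\geq\dim\vert\mathcal{L}_\Delta\vert-k$ is trivially true because each vanishing condition cuts out at most one codimension.
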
 

\subsection{Linear system of curves with prescribed quadratic points}
Let $\Delta$ be a polytope and let $C\in\vert\mathcal{L}_\Delta\vert$. Suppose that $C$ is irreducible with $k$ ordinary quadratic points $x_1,...,x_k$ in $(\C^*)^2$ and no other singular points. Fix also $m$ marked points $p_1,...p_m$ on $C\setminus\lbrace x_1,...,x_k\rbrace$.
\begin{lemma}
\label{theoremRR}
Suppose that 
$$
2k+m<b(\Delta).
$$
Then, the sublinear system of $\vert\mathcal{L}_\Delta\vert$ consisting of curves having singularities at $x_1,...,x_k$ and passing through $p_1,...,p_m$ is of codimension $3k+m$.
\end{lemma}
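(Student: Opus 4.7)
The plan is to reinterpret the codimension of the sublinear system as the rank of an evaluation map, reduce the question to an $H^1$-vanishing on the normalization $\widetilde{C}$ of $C$ via a residual exact sequence, and conclude by Riemann--Roch using the hypothesis $2k+m<b(\Delta)$. Concretely, set $Z=2x_1+\cdots+2x_k+p_1+\cdots+p_m$, a $0$-dimensional subscheme of $\mathrm{Tor}(\Delta)$ of length $3k+m$, and let $D$ denote the toric divisor class with polytope $\Delta$. Since a curve has an ordinary double point at $x_i$ exactly when its defining section lies in $\mathfrak{m}_{x_i}^2$, the sublinear system is the projectivization of $H^0(\mathcal{I}_Z(D))$. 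Taking cohomology of $0\to\mathcal{I}_Z(D)\to\mathcal{O}(D)\to\mathcal{O}_Z\to 0$ and using toric vanishing $H^1(\mathrm{Tor}(\Delta),\mathcal{O}(D))=0$, the codimension equals $3k+m-h^1(\mathrm{Tor}(\Delta),\mathcal{I}_Z(D))$, so it suffices to prove $H^1(\mathrm{Tor}(\Delta),\mathcal{I}_Z(D))=0$.

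Since the local equation of $C$ at a node $x_i$ lies in $\mathfrak{m}_{x_i}^2$, the inclusion $Z\subset C$ holds scheme-theoretically; the sequence $0\to\mathcal{O}(-C)\to\mathcal{I}_Z\to\mathcal{I}_{Z/C}\to 0$ twisted by $\mathcal{O}(D)=\mathcal{O}(C)$ becomes
$$0\to\mathcal{O}_{\mathrm{Tor}(\Delta)}\to\mathcal{I}_Z(D)\to\mathcal{I}_{Z/C}(D|_C)\to 0.$$
Using $H^i(\mathrm{Tor}(\Delta),\mathcal{O})=0$ for $i\geq 1$ (cohomology of the structure sheaf on a complete toric variety vanishes), we identify $H^1(\mathrm{Tor}(\Delta),\mathcal{I}_Z(D))$ with $H^1(C,\mathcal{I}_{Z/C}(D|_C))$. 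Let $\nu:\widetilde{C}\to C$ be the normalization, with $\widetilde{x}_i^{\pm}$ the two preimages of each $x_i$ and $\widetilde{p}_j$ the preimage of $p_j$. A local computation at a node shows that $\nu_*\mathcal{L}\cong\mathcal{I}_{Z/C}(D|_C)$ for
$$\mathcal{L}=\mathcal{O}_{\widetilde{C}}\Bigl(\nu^*(D|_C)-2\sum_i(\widetilde{x}_i^++\widetilde{x}_i^-)-\sum_j\widetilde{p}_j\Bigr);$$
since $R^1\nu_*=0$ for the finite morphism $\nu$, this gives $H^1(C,\mathcal{I}_{Z/C}(D|_C))\cong H^1(\widetilde{C},\mathcal{L})$.

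It remains to verify $H^1(\widetilde{C},\mathcal{L})=0$. The normalization $\widetilde{C}$ is smooth of genus $g=p_a(C)-k=i(\Delta)-k$ by the genus-nodes formula, using that the arithmetic genus of any curve in $\vert\mathcal{L}_\Delta\vert$ equals $i(\Delta)$. By Pick's formula $C^2=2A(\Delta)=2i(\Delta)+b(\Delta)-2$, so $\deg\mathcal{L}=C^2-4k-m$ and $\deg K_{\widetilde{C}}=2g-2=2i(\Delta)-2k-2$. A direct computation yields
$$\deg(K_{\widetilde{C}}-\mathcal{L})=2k+m-b(\Delta)<0$$
by hypothesis, so $h^0(\widetilde{C},K_{\widetilde{C}}-\mathcal{L})=0$, and Serre duality gives $h^1(\widetilde{C},\mathcal{L})=0$. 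The main technical step is the identification $\mathcal{I}_{Z/C}(D|_C)\cong\nu_*\mathcal{L}$: in the local ring $\mathcal{O}_{C,x_i}\cong\mathbb{C}\{u,v\}/(uv)$ at a node, the ideal $\mathfrak{m}_{x_i}^2=(u^2,v^2)$ of the fat point $2x_i$ coincides, after normalization, with the product ideal $(u^2)\oplus(v^2)\subset\mathbb{C}\{u\}\oplus\mathbb{C}\{v\}$, and this local matching assembles globally into the asserted sheaf isomorphism.
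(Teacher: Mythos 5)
Your proof is correct and follows essentially the same route as the paper: reduce to a cohomology computation on the normalization $\widetilde{C}$, compute $\deg(K_{\widetilde{C}}-D')=2k+m-b(\Delta)<0$, and conclude by Riemann--Roch. The only difference is one of packaging: the paper manipulates explicit sections and divisors (choosing a generic section $s$, restricting, pulling back under normalization), whereas you organize the same content via ideal sheaves, the residual exact sequence, and an $H^1$-vanishing statement, which is a cleaner formulation of the identical argument.
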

\begin{proof}
One has the following exact sheaf sequence:
$$
0\rightarrow\mathcal{O}_{Tor(\Delta)}\rightarrow\mathcal{O}_{Tor
(\Delta)}(C)\rightarrow\mathcal{O}_{C}(C)\rightarrow 0,
$$
where $\mathcal{O}_{Tor(\Delta)}$ is the sheaf of holomorphic functions on $Tor(\Delta)$, the sheaf $\mathcal{O}_{Tor(\Delta)}(C)$ is the invertible sheaf associated to the divisor $C$, and 
$\mathcal{O}_{C}(C)$ denotes the restriction of $\mathcal{O}_{Tor(\Delta)}(C)$ to $C$. As $H^1(Tor(\Delta),\mathcal{O}_{Tor(\Delta)})=0$, the long exact sequence in cohomology associated to the above exact sequence splits. The first part of the long exact sequence is the following:
$$
0\rightarrow\C\rightarrow H^0(Tor(\Delta),\mathcal{O}_{Tor
(\Delta)}(C))\overset{r}{\rightarrow}H^0(C,\mathcal{O}_{C}(C))\rightarrow 0.
$$ 
Denote by $E(x_1,...,x_k,p_1,...,p_m)$ the subspace of $H^0(Tor(\Delta),\mathcal{O}_{Tor
(\Delta)}(C))$ consisting of sections vanishing at least at order $2$ at $x_1,...,x_k$ and passing through $p_1,...,p_m$. Put $F=r(E(x_1,...,x_k,p_1,...,p_m))$. One has the following exact sequence:
$$
0\rightarrow\C\rightarrow E(x_1,...,x_k,p_1,...,p_m)\overset{r}{\rightarrow}F\rightarrow 0.
$$

Fix a generic section $s\in E(x_1,...,x_k,p_1,...,p_m)$ with divisor $D$. Then, $D\cap C$ consists of a finite number of points ($C$ is irreducible), and it defines a divisor on $C$ and also on $\tilde{C}$, the normalization of $C$. Denote by $(\tilde{x_i},\tilde{x_i}')$ the inverse images of $x_i$ by the normalization map. Denote by $\tilde{p_i}$ the inverse image of $p_i$ by the normalization map. Define on $\tilde{C}$ the divisor 
$$D':=D\cap C-E,$$
where
$$
E=\left\lbrace 2\sum_{i=1}^k(\tilde{x_i}+\tilde{x_i}')+\sum_{i=1}^m\tilde{p_i}\right\rbrace.$$ 
By definition of $r$, the set $F$ is the subspace of $H^0(C,\mathcal{O}_C(C\cap D))$ of sections vanishing at least at order $2$ at the points $x_1,...,x_k$ and at least at order $1$ at the points $p_1,...,p_m$. Considering the normalization map, one gets the following injective map:
$$
0\rightarrow H^0(C,\mathcal{O}_C(D\cap C))\overset{\iota}{\rightarrow} H^0(\tilde{C},\mathcal{O}_{\tilde{C}}(D\cap C)).
$$
Thus, $\dim(F)\leq\dim(\iota(F))$. The linear system $\iota(F)$ is the linear system of sections of $\mathcal{O}_{\tilde{C}}(D\cap C)$ vanishing at least at order $2$ at the points $(\tilde{x_i},\tilde{x_i}')$ and at least at order $1$ at the points $p_i'$. Consider the following exact sheaf sequence:
$$
0\rightarrow\mathcal{O}_{\tilde{C}}(D')\xrightarrow{\otimes s}\mathcal{O}_{\tilde{C}}(D\cap C)\xrightarrow{r} \mathcal{O}_{E}(D\cap C\mid_E)\rightarrow 0, 
$$
where $r$ is the restriction map. Passing to the associated long exact sequence, one sees that $\iota(F)$ is identified with $H^0(\tilde{C},D')$.
Let us compute $h^0(\tilde{C},D')$. The divisor $D'$ is of degree $A(2\Delta)-2A(\Delta)-4k-m$. Then, one has
$$
\begin{array}{c@{=\mbox{ }}c}
\deg(K_{\tilde{C}}-D')\mbox{ } & -\deg(D')-2+2g(\tilde{C})\\
& -A(2\Delta)+2A(\Delta)+4k+m-2+2(i(\Delta)-k)\\
& 2k+m-b(\Delta).\\
\end{array}
$$
By hypothesis, $2k+m-b(\Delta)<0$. So $h^0(\tilde{C},K_{\tilde{C}}-D')=0$, and by Riemann-Roch formula, one has
$$
\begin{array}{c@{=\mbox{ }}c}
h^0(\tilde{C},D')\mbox{ } & \deg(D')+1-g(\tilde{C})\\
 & A(2\Delta)-2A(\Delta)-4k-m+1-(i(\Delta)-k)\\
 & i(\Delta)+b(\Delta)-1-3k-m\\
 & \dim(\mathcal{L}(\Delta))-3k-m.
\end{array}
$$
So one gets
$$
\dim(F)\leq\dim(\mathcal{L}(\Delta))-3k-m.
$$
On the other hand
$$
\dim(F)=\dim(\PP(E(x_1,...,x_k,p_1,...,p_m))),
$$ 
and
$$
\dim(\PP(E(x_1,...,x_k,p_1,...,p_m)))\geq \dim(\mathcal{L}(\Delta))-3k-m.
$$
So finally
$$
\dim(\PP(E(x_1,...,x_k,p_1,...,p_m)))=\dim(\mathcal{L}(\Delta))-3k-m.
$$
\end{proof}
\subsection{A transversality theorem}
Fix three polytopes $\Delta,\Delta'$ and $\Delta''$. In $\mathcal{P}(\Delta)\times (\C^*)^2\times\mathcal{P}(\Delta')\times\mathcal{P}(\Delta'')$, consider the algebraic variety defined by
$$
S=\left\lbrace \begin{array}{c@{=\mbox{}}c}
P(t) & 0,\\
\partial P/\partial x(t) & 0,\\
\partial P/\partial y(t) & 0,\\
Q(t) & 0,\\
R(t) & 0.\\
\end{array}\right. 
$$
Let $(P_0,Q_0,R_0)\in \mathcal{P}(\Delta)\times\mathcal{P}(\Delta')\times\mathcal{P}(\Delta'')$, and assume that $t_0=(x_0,y_0)$ is an ordinary quadratic point of $\lbrace P_0=0\rbrace $ such that $\lbrace Q_0=0\rbrace$ intersects $\lbrace R_0=0\rbrace $ transversely at $t_0$. Then, in particular,  $(P_0,t_0,Q_0,R_0)\in S$. 
\begin{lemma}
There exists a neighborhood $U$ of $(P_0,t_0,Q_0,R_0)$ in $\mathcal{P}(\Delta)\times (\C^*)^2\times\mathcal{P}(\Delta')\times\mathcal{P}(\Delta'')$ such that:
\begin{itemize}
\item $S\cap U$ is smooth of codimension $5$.
\item If 
$$\pi:\mathcal{P}(\Delta)\times (\C^*)^2\times\mathcal{P}(\Delta')\times\mathcal{P}(\Delta'')\longrightarrow \mathcal{P}(\Delta)\times\mathcal{P}(\Delta')\times\mathcal{P}(\Delta'')$$ is the projection forgetting the second factor, then $S'=\pi(S\cap U)$ is smooth of codimension $3$. Moreover, the tangent space to $S'$ at $\pi(P_0,t_0,Q_0,R_0)$ is given by the equations 
$$
\left\lbrace\begin{array}{c@{=\mbox{}}c}
\sum A_{i,j}x_0^i y_0^j & 0,\\
-\begin{pmatrix}
d_{t_0}Q_0\\
d_{t_0}Q_0\\
\end{pmatrix}
(Hess_{t_0}P_0)^{-1}
\begin{pmatrix}
\sum iA_{i,j}x_0^{i-1}y_0^{j}\\
\sum jA_{i,j}x_0^{i}y_0^{j-1}\\
\end{pmatrix}
+
\begin{pmatrix}
\sum B_{k,l}x_0^k y_0^l\\
\sum C_{m,n}x_0^m y_0^n\\
\end{pmatrix}
& 0,
\end{array}\right.
$$
\end{itemize}
where $\left(A_{i,j}\right)_{i,j\in\Delta}$ are coordinates in the tangent space of $\mathcal{P}(\Delta)$, the coordinates $\left((B_{k,l}\right)_{k,l\in\Delta'}$ are coordinates in the tangent space of  $\mathcal{P}(\Delta')$, and $\left(C_{m,n}\right)_{m,n\in\Delta''}$ are coordinates in the tangent space of  $\mathcal{P}(\Delta'')$.

\end{lemma}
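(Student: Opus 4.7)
The plan is to apply the implicit function theorem twice: first to show that $S$ is smooth of codimension $5$ near $(P_0,t_0,Q_0,R_0)$, and then to show that the restriction of $\pi$ is a local diffeomorphism onto $S'$, so that $S'$ inherits smoothness of codimension $3$, with tangent space obtained by eliminating $(u,v)$ from the linearization of the five defining equations.

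For the first step, I would choose lattice points $(i_0,j_0)\in\Delta$, $(k_0,l_0)\in\Delta'$ and $(m_0,n_0)\in\Delta''$ and examine the $5\times 5$ minor of the Jacobian of the defining system formed by the partial derivatives with respect to $a_{i_0,j_0}$, $x$, $y$, $b_{k_0,l_0}$ and $c_{m_0,n_0}$. Because $\nabla P_0(t_0)=0$ the first row is supported only in the $a_{i_0,j_0}$ column, and successive expansion along that row and along the columns of $b_{k_0,l_0}$ and $c_{m_0,n_0}$ shows that the minor equals
$$
x_0^{i_0}y_0^{j_0}\cdot x_0^{k_0}y_0^{l_0}\cdot x_0^{m_0}y_0^{n_0}\cdot\det\bigl(\mathrm{Hess}_{t_0}(P_0)\bigr).
$$
This is nonzero because $t_0\in(\C^*)^2$ and because $t_0$ is a non-degenerate quadratic point of $P_0$, so by the implicit function theorem $S\cap U$ is smooth of codimension $5$.

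For the second step, I would linearize the five defining equations at $(P_0,t_0,Q_0,R_0)$: a tangent vector $(A,(u,v),B,C)$ must satisfy $A(t_0)=0$, the system $\mathrm{Hess}_{t_0}(P_0)\cdot(u,v)^{\top}=-\bigl(\partial_xA(t_0),\partial_yA(t_0)\bigr)^{\top}$ coming from the two $\nabla P$ equations, and $B(t_0)+d_{t_0}Q_0\cdot(u,v)^{\top}=0$, $C(t_0)+d_{t_0}R_0\cdot(u,v)^{\top}=0$. If $(A,B,C)=0$, invertibility of the Hessian forces $(u,v)=0$, so $d\pi|_{TS}$ is injective and, by equality of dimensions, $\pi$ restricts to a local diffeomorphism from $S\cap U$ onto $S'$, which is therefore smooth of codimension $3$. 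The tangent space to $S'$ at $\pi(P_0,t_0,Q_0,R_0)$ is the image of $TS$ under $d\pi$, and solving for $(u,v)$ via the Hessian and substituting into the $Q$- and $R$-linearizations yields precisely the two displayed matrix equations of the lemma. Independence of the resulting three linear functionals in $(A,B,C)$ follows because the first only involves $A$-coordinates, while the second and third are respectively the only ones involving $B$- and $C$-coordinates, with nonzero monomial coefficients $x_0^{k_0}y_0^{l_0}$ and $x_0^{m_0}y_0^{n_0}$.

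I do not foresee a substantive obstacle: the whole argument reduces to a careful $5\times 5$ Jacobian computation, where the non-degeneracy of $\mathrm{Hess}_{t_0}(P_0)$ together with the fact that $t_0\in(\C^*)^2$ supplies every non-vanishing needed. The transversality hypothesis on $\lbrace Q_0=0\rbrace$ and $\lbrace R_0=0\rbrace$ at $t_0$ does not actually enter in the proof of this local lemma, but it will ensure that the linearizations attached to different singular points stay independent when the lemma is assembled into Theorem \ref{theoremprincipal}.
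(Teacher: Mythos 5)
Your proof is correct and follows essentially the same lines as the paper's: both exhibit an invertible $5\times 5$ minor of the Jacobian (the paper fixes the constant-term columns $i=j=k=l=m=n=0$ while you keep arbitrary lattice points $(i_0,j_0)$, $(k_0,l_0)$, $(m_0,n_0)$, both reducing to a nonzero multiple of $\det\mathrm{Hess}_{t_0}P_0$), and both use the invertible Hessian to eliminate $(x,y)$ and pass from $S$ to $S'$, reading off the displayed tangent-space equations by substitution. Your observation that the transversality of $\lbrace Q_0=0\rbrace$ and $\lbrace R_0=0\rbrace$ at $t_0$ plays no role in this local lemma and only enters when combining the manifolds $S'_h$ in Theorem~\ref{theoremtrans} is also accurate.
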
 
\begin{proof}
The first point follows from the implicit function theorem. Introduce the map 
$$
F=(F_1,F_2,F_3,F_4,F_5):\mathcal{P}(\Delta)\times (\C^*)^2\times\mathcal{P}(\Delta')\times\mathcal{P}(\Delta'')\longrightarrow\C^5,
$$
with 
\begin{itemize}
\item $F_1(P,t,Q,R)=P(t)$, 
\item $F_2(P,t,Q,R)=\partial P/\partial x(t)$, 
\item $F_3(P,t,Q,R)=\partial P/\partial y(t)$, 
\item $F_4(P,t,Q,R)=Q(t)$, 
\item $F_5(P,t,Q,R)=R(t)$.
\end{itemize}
Denote by $J_F(P_0,t_0,Q_0,R_0)$ the Jacobian matrix of $F$ at $(P_0,t_0,Q_0,R_0)$. By an easy computation, one has
$$
J_F(P_0,t_0,Q_0,R_0)=
\begin{pmatrix}
 x_0^iy_0^j & \begin{matrix}
 0 & 0
 \end{matrix} & \begin{matrix}
 0 & \quad\:\: 0
 \end{matrix}\\
 \begin{matrix}
 ix_0^{i-1}y_0^j \\ 
 jx_0^iy_0^{j-1} \\
 \end{matrix} &
 \begin{matrix}
  \mathlarger{Hess_{t_0} P_0}
 \end{matrix} &
 \begin{matrix}
 0 & \quad\:\: 0 \\ 
 0 & \quad\:\: 0 \\
 \end{matrix} \\
 0 & \begin{matrix}
 d_{t_0}Q_0
 \end{matrix} & \begin{matrix}
 x_0^ky_0^l & 0
 \end{matrix}\\
  0 & \begin{matrix}
 d_{t_0}R_0
 \end{matrix} & \begin{matrix}
 \:\:\:\: 0 & x_0^my_0^n
 \end{matrix}\\
\end{pmatrix}_{\left[ 5,\vert\Delta\vert+2+\vert\Delta'\vert+\vert\Delta''\vert \right]}.
$$
As $t_0$ is an ordinary quadratic point of $P_0$, the matrix $Hess_{t_0} P_0$ is invertible. 
It follows that $J_F(P_0,t_0,Q_0,R_0)$ is of rank $5$. In fact, the submatrix of $J_F(P_0,t_0,Q_0,R_0)$, where $i=j=0$, $k=l=0$ and $m=n=0$, is as follows:
$$
\begin{pmatrix}
 1 & \begin{matrix}
 0 & 0
 \end{matrix} & \begin{matrix}
 0 & 0
 \end{matrix}\\
 \begin{matrix}
 0 \\ 
 0 \\
 \end{matrix} &
 \begin{matrix}
  \mathlarger{Hess_{t_0} P_0}
 \end{matrix} &
 \begin{matrix}
 0 & 0 \\ 
 0 & 0 \\
 \end{matrix} \\
 0 & \begin{matrix}
 d_{t_0}Q_0
 \end{matrix} & \begin{matrix}
 1 & 0
 \end{matrix}\\
  0 & \begin{matrix}
 d_{t_0}R_0
 \end{matrix} & \begin{matrix}
 0 & 1
 \end{matrix}\\
\end{pmatrix}.
$$
This last matrix is invertible. For the second point, since $Hess_{t_0} P_0$ is invertible, use the second and the third equations of the tangent space to $S$ at $(P_0,t_0,Q_0,R_0)$ to write $t$ as a function of $P$ over a small neighborhood of $(P_0,Q_0,R_0)$ in $\mathcal{P}(\Delta)\times\mathcal{P}(\Delta')\times\mathcal{P}(\Delta'')$. This proves the lemma.
\end{proof}
Assume now that the curve $\lbrace P_0=0\rbrace$ is irreducible and has $N$ non-degenerated double points $t_h$, $h=1,...,N$ such that $\lbrace Q_0=0\rbrace$ intersects $\lbrace R_0=0\rbrace $ transversely at each $t_h$. Assume also that $\lbrace P_0=0\rbrace$ has no further singular points. Applying the above lemma to each $t_h$, we obtain $N$ nonsingular analytic manifolds of $\mathcal{P}(\Delta)\times\mathcal{P}(\Delta')\times\mathcal{P}(\Delta'')$ passing through $(P_0,Q_0,R_0)$. Denote these $N$ nonsingular analytic manifolds by $S'_1,\cdots ,S'_N$, 
\begin{theorem}
\label{theoremtrans}
If $2N<b(\Delta)$, then
there exists a neighborhood $W$ of $(P_0,Q_0,R_0)$ in $\mathcal{P}(\Delta)\times\mathcal{P}(\Delta')\times\mathcal{P}(\Delta'')$ such that the intersection 
$$
S'_1\cap\cdots\cap S'_N\cap W
$$ 
is transversal in $\mathcal{P}(\Delta)\times\mathcal{P}(\Delta')\times\mathcal{P}(\Delta'')$.
\end{theorem}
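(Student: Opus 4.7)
The plan is to follow the structure of the proof of Brusotti's theorem (Theorem~\ref{Brusottitheorem}), now carried out in the product space $\mathcal{P}(\Delta)\times\mathcal{P}(\Delta')\times\mathcal{P}(\Delta'')$, with Lemma~\ref{theoremRR} playing the role of Lemma~\ref{Brusottilemma}. First I would apply the preceding codimension-3 lemma separately at each ordinary quadratic point $t_h$: this shows that, in a neighborhood of $(P_0,Q_0,R_0)$, each $S'_h$ is a smooth submanifold of codimension $3$, whose tangent space at $(P_0,Q_0,R_0)$ is the joint kernel of three explicit linear functionals that I will denote $\alpha_h(A)=A(t_h)$, $\beta_h(A,B)=B(t_h)-d_{t_h}Q_0\cdot(\mathrm{Hess}_{t_h}P_0)^{-1}\nabla A(t_h)$, and $\gamma_h(A,C)=C(t_h)-d_{t_h}R_0\cdot(\mathrm{Hess}_{t_h}P_0)^{-1}\nabla A(t_h)$. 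Transversality of $S'_1\cap\cdots\cap S'_N$ at $(P_0,Q_0,R_0)$ is then equivalent to linear independence of the $3N$ functionals $\lbrace\alpha_h,\beta_h,\gamma_h\rbrace_{h=1}^{N}$ on the tangent space of the ambient product.

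To establish this independence, I would assume a relation $\sum_h(a_h\alpha_h+b_h\beta_h+c_h\gamma_h)\equiv 0$ on the tangent space and test it against directions of the form $(A,0,0)$. This collapses the relation to the single identity
$$
\sum_h a_h A(t_h)+\sum_h v_h\cdot\nabla A(t_h)=0\qquad\text{for every }A\in\mathcal{P}(\Delta),
$$
where $v_h=-(b_h\, d_{t_h}Q_0+c_h\, d_{t_h}R_0)(\mathrm{Hess}_{t_h}P_0)^{-1}\in\C^{2}$ depends linearly on $(b_h,c_h)$. The heart of the proof is the application of Lemma~\ref{theoremRR} to $\lbrace P_0=0\rbrace$ with $k=N$ double points and $m=0$ marked points: the hypothesis $2N<b(\Delta)$ is exactly the one assumed, and its conclusion, namely that the sublinear system of $\vert\mathcal{L}_\Delta\vert$ consisting of curves with double points at $t_1,\ldots,t_N$ has codimension $3N$, is equivalent to the linear independence of the $3N$ functionals $A\mapsto A(t_h),\partial_x A(t_h),\partial_y A(t_h)$ on $\mathcal{P}(\Delta)$. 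This forces $a_h=0$ and $v_h=0$ for every $h$. From $v_h=0$ one then reads $b_h\, d_{t_h}Q_0+c_h\, d_{t_h}R_0=0$ (after stripping the invertible factor $(\mathrm{Hess}_{t_h}P_0)^{-1}$), and since $\lbrace Q_0=0\rbrace$ and $\lbrace R_0=0\rbrace$ intersect transversally at $t_h$, the covectors $d_{t_h}Q_0$ and $d_{t_h}R_0$ are linearly independent, whence $b_h=c_h=0$.

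This proves independence at the tangent-space level; the implicit function theorem then upgrades it to transversality of the smooth intersection throughout a neighborhood $W$. The main step I expect to require care is recognizing that all the coupling between the $A$, $B$, $C$ variables can be pushed onto $A$ alone through the explicit form of $v_h$, so that only the Riemann--Roch independence on $\mathcal{P}(\Delta)$ (provided by Lemma~\ref{theoremRR}) is needed; no separate evaluation-independence on $\mathcal{P}(\Delta')$ or $\mathcal{P}(\Delta'')$ has to be invoked, which is precisely why the single hypothesis $2N<b(\Delta)$ turns out to be sufficient.
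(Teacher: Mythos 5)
Your proposal is correct and takes essentially the same approach as the paper: both reduce, via the implicit function theorem, to the linear independence of the $3N$ tangent-space functionals, and both observe that it suffices to look only at the $\mathcal{P}(\Delta)$-directions, where Lemma~\ref{theoremRR} delivers the rank, with the transversality of $\lbrace Q_0=0\rbrace$ and $\lbrace R_0=0\rbrace$ absorbing the remaining coefficients (the paper packages this as left-multiplication of the $3N\times(\vert\Delta\vert+\vert\Delta'\vert+\vert\Delta''\vert)$ Jacobian by an invertible block-diagonal matrix built from $\mathrm{Hess}_{t_h}P_0$ and $\begin{pmatrix}d_{t_h}Q_0\\ d_{t_h}R_0\end{pmatrix}^{-1}$, whereas you unwind the same invertibility by hand on the coefficients $b_h,c_h$).
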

\begin{proof}
By the implicit function theorem, it is sufficient to show that the tangent spaces to the manifolds $S'_h$ at $(P_0,Q_0,R_0)$ intersect transversely. This is equivalent to the fact that the matrix
$$
\scriptsize{M=\begin{pmatrix}
x_1^iy_1^j & 0 & 0 \\
\begin{pmatrix}
d_{t_1}Q_0\\
d_{t_1}R_0\\
\end{pmatrix}
(Hess_{t_1}P_0)^{-1}
\begin{pmatrix}
ix_1^{i-1}y_1^j\\
jx_1^iy_1^{j-1}\\
\end{pmatrix}
&
\begin{pmatrix}
x_1^ky_1^l\\
0\\
\end{pmatrix} &
\begin{pmatrix}
0\\
x_1^my_1^n\\
\end{pmatrix}\\
\vdots & \vdots & \vdots\\
\vdots & \vdots & \vdots\\
x_N^iy_N^j & 0 & 0 \\
\begin{pmatrix}
d_{t_N}Q_0\\
d_{t_N}R_0\\
\end{pmatrix}
(Hess_{t_N}P_0)^{-1}
\begin{pmatrix}
ix_N^{i-1}y_N^j\\
jx_N^iy_N^{j-1}\\
\end{pmatrix}
&
\begin{pmatrix}
x_N^ky_N^l\\
0\\
\end{pmatrix} &
\begin{pmatrix}
0\\
x_N^my_N^n\\
\end{pmatrix}\\
\end{pmatrix}_{\left[3N,\vert\Delta\vert+\vert\Delta'\vert+\vert\Delta''\vert \right]}}$$
is of rank $3N$. As 
$\lbrace Q_0=0\rbrace$ and $\lbrace R_0=0\rbrace $ intersect transversely at each $t_h$,
the matrices 
$$
\begin{pmatrix}
d_{t_1}Q_0\\
d_{t_1}R_0\\
\end{pmatrix},
\cdots,
\begin{pmatrix}
d_{t_N}Q_0\\
d_{t_N}R_0\\
\end{pmatrix}
$$
are invertible.
Consider the following matrix:
$$ \scriptsize{N=
\begin{pmatrix}
1 & 
\begin{matrix}
0 & 0\\
\end{matrix} & 0 & \cdots & 0 & \begin{matrix}
0 & 0\\
\end{matrix} \\
\begin{matrix}
0\\
0\\
\end{matrix} & 
(Hess_{t_1}P_0)
\begin{pmatrix}
d_{t_1}Q_0\\
d_{t_1}R_0\\
\end{pmatrix}^{-1} & \begin{matrix}
0\\
0\\
\end{matrix} & \begin{matrix}
\cdots\\
\cdots\\
\end{matrix} & 
\begin{matrix}
0\\
0\\
\end{matrix} & 
\begin{matrix}
0 & 0\\
0 & 0\\
\end{matrix}\\
\vdots & \vdots & \ddots & \cdots & \vdots & \vdots \\
0 & 
\begin{matrix}
0 & 0\\
\end{matrix}
& 0 & \cdots & 1 & 
\begin{matrix}
0 & 0\\
\end{matrix}\\
\begin{matrix}
0\\
0\\
\end{matrix} & 
\begin{matrix}
0 & 0\\
0 & 0\\
\end{matrix} & 
\begin{matrix}
0\\
0\\
\end{matrix} & 
\begin{matrix}
\cdots\\
\cdots\\
\end{matrix} & 
\begin{matrix}
0\\
0\\
\end{matrix} &
(Hess_{t_N}P_0)
\begin{pmatrix}
d_{t_N}Q_0\\
d_{t_N}R_0\\
\end{pmatrix}^{-1}\\

\end{pmatrix}_{\left[3N,3N\right]}.}
$$
The product $NM$ is of the following form:
$$
\begin{pmatrix}
x_1^iy_1^j & 0 & 0 \\
\begin{pmatrix}
ix_1^{i-1}y_1^j\\
jx_1^iy_1^{j-1}\\
\end{pmatrix}
&
\begin{pmatrix}
*\cdots *\\
*\cdots *\\
\end{pmatrix} &
\begin{pmatrix}
*\cdots *\\
*\cdots *\\
\end{pmatrix}\\
\vdots & \vdots & \vdots\\
\vdots & \vdots & \vdots\\
x_N^iy_N^j & 0 & 0 \\
\begin{pmatrix}
ix_N^{i-1}y_N^j\\
jx_N^iy_N^{j-1}\\
\end{pmatrix}
&
\begin{pmatrix}
*\cdots *\\
*\cdots *\\
\end{pmatrix} &
\begin{pmatrix}
*\cdots *\\
*\cdots *\\
\end{pmatrix}\\
\end{pmatrix}_{\left[3N,\vert\Delta\vert+\vert\Delta'\vert+\vert\Delta''\vert\right]}.
$$
It is then sufficient to show that the matrix
$$
\begin{pmatrix}
x_1^iy_1^j\\
\begin{pmatrix}
ix_1^{i-1}y_1^j\\
jx_1^iy_1^{j-1}\\
\end{pmatrix}\\
\vdots\\
x_N^iy_N^j\\
\begin{pmatrix}
ix_N^{i-1}y_N^j\\
jx_N^iy_N^{j-1}\\
\end{pmatrix}\\
\end{pmatrix}_{\left[3N,\vert\Delta\vert\right]}.
$$
is of rank $3N$.
It is equivalent to show that the linear space of curves in $\vert\mathcal{L}_\Delta\vert$ with singularities at all the quadratic points of $({P_0=0})$ is of codimension $3N$. This follows from Lemma \ref{theoremRR}.
\end{proof}
Let $\partial\Delta_+\subset\partial\Delta$ be a subset of the set of edges of $\Delta$. Let $\partial\Delta'_+\subset\partial\Delta'$ (resp., \linebreak $\partial\Delta''_+\subset\partial\Delta''$) be a subset of the set of edges of $\Delta'$ (resp., of $\Delta''$).
Put
$$
m=\sum_{F\in\partial\Delta_+}\left(\#(F\cap\Z^2)-1\right),
$$
$$
m'=\sum_{F'\in\partial\Delta'_+}\left(\#(F'\cap\Z^2)-1
\right),
$$
$$
m''=\sum_{F''\in\partial\Delta''_+}\left(\#(F''\cap\Z^2)-1
\right).
$$
\begin{theorem}
\label{theoremfinal}

Suppose that 
$$
\left\lbrace\begin{array}{c@{<\mbox{ }}c}
2N+m & b(\Delta),\\
m' & b(\Delta'),\\
m'' & b(\Delta'').\\
\end{array}\right.
$$ 
Then, there exists a neighborhood $V$ of $(P_0,t_0,Q_0,R_0)$ such that the intersection
$$
S'_1\cap ...\cap S'_N\cap\left(\mathcal{P}(\Delta,\partial\Delta_+,P_0)\times\mathcal{P}(\Delta',\partial\Delta'_+,Q_0)\times\mathcal{P}(\Delta'',\partial\Delta''_+,R_0)\right)\cap V
$$
is transversal in $\mathcal{P}(\Delta)\times\mathcal{P}(\Delta')\times\mathcal{P}(\Delta'')$.
\end{theorem}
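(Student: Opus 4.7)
The plan is to follow the template of the proof of Theorem \ref{theoremtrans}, incorporating the additional linear constraints coming from the subspace
$$
W=\mathcal{P}(\Delta,\partial\Delta_+,P_0)\times\mathcal{P}(\Delta',\partial\Delta'_+,Q_0)\times\mathcal{P}(\Delta'',\partial\Delta''_+,R_0).
$$
Denote by $T_W$ the tangent space to $W$ at $(P_0,Q_0,R_0)$; it consists of triples $(A,B,C)$ of polynomial perturbations whose coefficients vanish on the edges belonging to $\partial\Delta_+$, $\partial\Delta'_+$, $\partial\Delta''_+$, respectively. By the implicit function theorem, the desired transversality is equivalent to the $3N$ linear forms defining $T_{S'_1},\ldots,T_{S'_N}$ being linearly independent after restriction to $T_W$. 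Concretely, the matrix $M$ from the proof of Theorem \ref{theoremtrans}, after deleting the columns indexed by the three boundary subsets, must have rank $3N$.

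I would next apply the same left multiplication by the invertible block matrix $N$ used in the proof of Theorem \ref{theoremtrans}, which is built only from $\mathrm{Hess}_{t_h}P_0$ and from $(d_{t_h}Q_0,d_{t_h}R_0)^{-1}$ and therefore commutes with the column deletion above. As in that proof, the resulting restricted matrix $NM$ has, in the top row of each $3$-block, only $A$-entries of the form $x_h^iy_h^j$, while the bottom two rows contain $A$-derivative entries together with starred $B$- and $C$-contributions. It therefore suffices to establish that the $A$-sub-block, whose columns are now indexed by $(i,j)\in(\Delta\cap\Z^2)\setminus\partial\Delta_+$, has rank $3N$; equivalently, that the linear system of curves in $\vert\mathcal{L}_\Delta\vert$ with a double point at each $t_h$ and with fixed truncation along $\partial\Delta_+$ has codimension $3N+m$ in $\mathcal{P}(\Delta)$.

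This codimension statement is exactly Lemma \ref{theoremRR} with $k=N$ and with the $m$ marked points being the intersection points of the compactified curve with the toric boundary divisors $T_F$, $F\in\partial\Delta_+$. The key dictionary is that fixing the truncation of a polynomial along an edge $F$ is projectively equivalent to prescribing the $\#(F\cap\Z^2)-1$ points in which the curve meets $T_F$, so that the total count of marked points equals $\sum_{F\in\partial\Delta_+}(\#(F\cap\Z^2)-1)=m$. The hypothesis $2N+m<b(\Delta)$ is then precisely what Lemma \ref{theoremRR} requires to conclude. The bounds $m'<b(\Delta')$ and $m''<b(\Delta'')$ play the analogous role for $\mathrm{Tor}(\Delta')$ and $\mathrm{Tor}(\Delta'')$, with $k=0$: they ensure that fixing the truncations of $Q$ and of $R$ on $\partial\Delta'_+$ and $\partial\Delta''_+$ does not collapse the corresponding complete linear systems, and hence that the $B$- and $C$-blocks of $NM$ fit into the block-triangular pattern needed to read off the rank from the $A$-submatrix.

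The main obstacle I foresee is a careful accounting of the column deletion: one must check that removing the $A$-columns indexed by $\partial\Delta_+$ does not reduce the rank of the $A$-submatrix of $NM$ below $3N$, and one must correctly match the codimension in $\mathcal{P}(\Delta)$ produced by fixing coefficients on $\partial\Delta_+$ with the number $m$ of marked points appearing in Lemma \ref{theoremRR}. Once this bookkeeping is verified, the rest of the argument is a direct transcription of the proof of Theorem \ref{theoremtrans}, with Lemma \ref{theoremRR} supplying the crucial codimension bound in exactly the same way.
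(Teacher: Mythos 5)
Your proposal takes essentially the same approach as the paper: the author's proof of Theorem~\ref{theoremfinal} consists exactly of reducing, via the matrix manipulation of Theorem~\ref{theoremtrans}, to three codimension statements and then invoking Lemma~\ref{theoremRR} with the marked points taken to be the intersections of $\{P_0=0\}$, $\{Q_0=0\}$, $\{R_0=0\}$ with the toric boundary divisors attached to $\partial\Delta_+$, $\partial\Delta'_+$, $\partial\Delta''_+$, under precisely the three numerical hypotheses of the theorem. One small bookkeeping remark: you assert that the restricted $A$-sub-block having rank $3N$ is \emph{equivalent} to the fixed-truncation sublinear system having codimension $3N+m$ in $\mathcal{P}(\Delta)$, but fixing the truncation along $\partial\Delta_+$ imposes $\#(\partial\Delta_+\cap\Z^2)$ linear conditions, which exceeds $m=\sum_{F\in\partial\Delta_+}(\#(F\cap\Z^2)-1)$ by roughly one per edge; the quantity Lemma~\ref{theoremRR} controls is the codimension $3N+m$ of the system ``double points at each $t_h$ and passing through the $m$ boundary intersection points,'' and passing from the independence of the $m$ evaluation functionals to the independence of the full set of coordinate functionals on $\partial\Delta_+$ is exactly the residual column-deletion accounting you yourself flag as the delicate step. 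The paper's own proof is equally brief at this point, so in substance and in rigor your argument tracks the author's.
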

\begin{proof}
The proof follows the same lines as the proof of Theorem \ref{theoremtrans}. It reduces to the proof of the following facts.
\begin{itemize}
\item The linear space of curves in $\vert\mathcal{L}_\Delta\vert$ with singularities at all the quadratic points of $({P_0=0})$ and passing through the $m$ points of intersection of $({P_0=0})$ with $\partial\Delta_+$ is of codimension $3N+m$.
\item The linear space of curves in $\vert\mathcal{L}_{\Delta'}\vert$  passing through the $m'$ points of intersection of $({Q_0=0})$ with $\partial\Delta'_+$ is of codimension $m'$.
\item The linear space of curves in $\vert\mathcal{L}_{\Delta''}\vert$  passing through the $m''$ points of intersection of $({R_0=0})$ with $\partial\Delta''_+$ is of codimension $m''$.
\end{itemize} 
All this follows from Lemma \ref{theoremRR}.
\end{proof}

\bibliographystyle{alpha}
\bibliography{biblio}

\begin{thebibliography}{IMS09}

\bibitem[AM08]{Ak-Ma}
M.~Akriche and F.~Mangolte.
\newblock Nombres de {B}etti des surfaces elliptiques r\'eelles.
\newblock {\em Beitr\"age Algebra Geom.}, 49(1):153--164, 2008.

\bibitem[Bih99]{Bihan99}
Fr{\'e}d{\'e}ric Bihan.
\newblock Une quintique num\'erique r\'eelle dont le premier nombre de {B}etti
  est maximal.
\newblock {\em C. R. Acad. Sci. Paris S\'er. I Math.}, 329(2):135--140, 1999.

\bibitem[Bih01]{Bihan2001}
Fr{\'e}d{\'e}ric Bihan.
\newblock Une sextique de l'espace projectif r\'eel avec un grand nombre
  d'anses.
\newblock {\em Rev. Mat. Complut.}, 14(2):439--461, 2001.

\bibitem[BR90]{Risler}
R.~Benedetti and J.~J. Risler.
\newblock {\em Real algebraic and semi-algebraic sets}.
\newblock Actualit\'es Math\'ematiques. [Current Mathematical Topics]. Hermann,
  Paris, 1990.

\bibitem[Bru06]{Brugalle2006}
Erwan Brugall{\'e}.
\newblock Real plane algebraic curves with asymptotically maximal number of
  even ovals.
\newblock {\em Duke Math. J.}, 131(3):575--587, 2006.

\bibitem[DK00]{DegKhar}
Alex Degtyarev and Viacheslav Kharlamov.
\newblock Topological properties of real algebraic varieties: {R}okhlin's way.
\newblock {\em Uspekhi Mat. Nauk}, 55(4(334)):129--212, 2000.

\bibitem[Ful93]{Fulton}
W.~Fulton.
\newblock {\em Introduction to toric varieties}, volume 131 of {\em Annals of
  Mathematics Studies}.
\newblock Princeton University Press, Princeton, NJ, 1993.
\newblock The William H. Roever Lectures in Geometry.

\bibitem[IK96]{ItKhar}
Ilia Itenberg and Viacheslav Kharlamov.
\newblock Towards the maximal number of components of a nonsingular surface of
  degree {$5$} in {${\bf R}{\rm P}^3$}.
\newblock In {\em Topology of real algebraic varieties and related topics},
  volume 173 of {\em Amer. Math. Soc. Transl. Ser. 2}, pages 111--118. Amer.
  Math. Soc., Providence, RI, 1996.

\bibitem[IMS09]{IMS}
I.~Itenberg, G.~Mikhalkin, and E.~Shustin.
\newblock {\em Tropical algebraic geometry}, volume~35 of {\em Oberwolfach
  Seminars}.
\newblock Birkh\"auser Verlag, Basel, second edition, 2009.

\bibitem[Ite93]{Itenberg93}
Ilia Itenberg.
\newblock Contre-examples \`a la conjecture de {R}agsdale.
\newblock {\em C. R. Acad. Sci. Paris S\'er. I Math.}, 317(3):277--282, 1993.

\bibitem[Ite97]{Itenberg1997}
Ilia Itenberg.
\newblock Topology of real algebraic {$T$}-surfaces.
\newblock {\em Rev. Mat. Univ. Complut. Madrid}, 10(Special Issue,
  suppl.):131--152, 1997.
\newblock Real algebraic and analytic geometry (Segovia, 1995).

\bibitem[KO06]{KenOkoun}
R.~Kenyon and A.~Okounkov.
\newblock Planar dimers and {H}arnack curves.
\newblock {\em Duke Math. J.}, 131(3):499--524, 2006.

\bibitem[Mik00]{Mik00}
G.~Mikhalkin.
\newblock Real algebraic curves, the moment map and amoebas.
\newblock {\em Ann. of Math. (2)}, 151(1):309--326, 2000.

\bibitem[Ore01]{Orevkov2001}
Stepan Orevkov.
\newblock Real quintic surface with 23 components.
\newblock {\em C. R. Acad. Sci. Paris S\'er. I Math.}, 333(2):115--118, 2001.

\bibitem[Ren15]{Renau}
Arthur Renaudineau.
\newblock A real sextic surface with 45 handles.
\newblock {\em Math. Z.}, 281(1-2):241--256, 2015.

\bibitem[Shu98]{Shustin}
E.~Shustin.
\newblock Gluing of singular and critical points.
\newblock {\em Topology}, 37(1):195--217, 1998.

\bibitem[Vir80]{Viro80}
Oleg Viro.
\newblock Curves of degree {$7$}, curves of degree {$8$} and the {R}agsdale
  conjecture.
\newblock {\em Dokl. Akad. Nauk SSSR}, 254(6):1306--1310, 1980.

\end{thebibliography}

\end{document}